\documentclass{article}
\usepackage{algorithm}
\usepackage{algorithmic}

\usepackage{enumerate}
\usepackage{mathbbold}
\usepackage{natbib}
\usepackage{bbm}
\usepackage[T1]{fontenc}
\usepackage{latexsym, amssymb, amsmath, amsfonts, amsthm}
\usepackage{graphicx, color}
\usepackage{subfigure}
\usepackage{epsf}
\usepackage{epstopdf}
\usepackage{authblk} 
\usepackage{multirow}
\usepackage{diagbox}
\usepackage{stmaryrd}
\usepackage{mathrsfs}
\usepackage{hyperref}
\hypersetup{
colorlinks =true, 
linkcolor=black, 
anchorcolor=black,
citecolor=black}

\numberwithin{equation}{section}

\newcommand{\R}{{\mathbb R}}

\newcommand{\C}{{\mathbb C}}

\newcommand{\be}{\begin{eqnarray}}
\newcommand{\ben}{\begin{eqnarray*}}
\newcommand{\en}{\end{eqnarray}}
\newcommand{\enn}{\end{eqnarray*}}

\newcommand{\pa}{\partial}

\newcommand{\hx}{\hat{x}}

\newtheorem{theorem}{Theorem}[section]

\theoremstyle{definition}
\newtheorem{example}{Example}[section]

\graphicspath{{./Figure}}

\begin{document}
\fontsize{9}{11}\selectfont
\title{\bf A quantitative direct sampling method for inhomogeneities from multi-frequency backscattering measurements}

\author[1]{Yukun Guo}
\author[2]{Xiaodong Liu} 
\makeatletter 
\renewcommand{\AB@affilnote}[1]{\footnotemark[#1]} 
\renewcommand{\AB@affillist}[1]{}
\renewcommand{\AB@affillist}[2]{} 
\makeatother

\date{}
\maketitle
\footnotetext[1]{School of Mathematics, Harbin Institute of Technology, Harbin, China, Email: ykguo@hit.edu.cn} 
\footnotetext[2]{Corresponding author. State Key Laboratory of Mathematical Sciences, Academy of Mathematics and Systems Science, Chinese Academy of Sciences, Beijing 100190, China. Email: xdliu@amt.ac.cn}

\begin{abstract}
The inverse scattering problem from the multi-frequency backscattering data is a long-standing open problem. We advance the theory by proving a local uniqueness result. Moreover,  we introduce a direct sampling method for quantitatively reconstructing unknown inhomogeneities. Comprehensive numerical experiments validate the robustness, accuracy, and computational effectiveness of the proposed quantitative direct sampling method. 

\vspace{.2in}
{\bf Keywords:} inverse medium scattering; quantitative direct sampling method; multi-frequency backscattering data; Helmholtz equation; Born approximation.

\end{abstract}

\section{Introduction}
Inverse scattering from backscattering data is a foundational and impactful research area at the intersection of applied mathematics, computational physics, and engineering, with critical real-world applications such as non-destructive testing, medical imaging, radar systems, and remote sensing. This research direction focuses on retrieving the geometric and  physical properties of unknown scatterers (e.g., obstacles, inhomogeneous media) using only backscattered wave measurements—where the source and receiver are co-located or aligned in a backscattering configuration—a setup that is widely adopted in practical scenarios due to its simplicity and operational efficiency. A central bottleneck in this field is the lack of theoretical guarantees for uniqueness and stable reconstruction from backscattering data.

Denote by $q : \mathbb{R}^n \to \mathbb{C}$ the compactly supported contrast and by $\operatorname{supp}(q) = D$ its support. We assume that $\Re(q)>-1$ and $\Im(q)\geq 0$ throughout $\R^n$. Then the total field $u^\mathrm{t}$ solves
\begin{equation}
    \Delta u^\mathrm{t} + k^2(1+q)u^\mathrm{t} = 0 \quad \text{in } \mathbb{R}^n,\quad n=2,3.
\end{equation}
Furthermore, the scattered wave $u^{s} = u^\mathrm{t} - u^{inc}$ is required to  satisfy the Sommerfeld radiation condition
\begin{equation*}
\lim_{r\to\infty} r^{\frac{n-1}{2}}\Bigl(\frac{\partial}{\partial r} - \mathrm{i}k\Bigr)u^{s}= 0,
\end{equation*}
where $r:=|x|$ and the limit holds uniformly in all directions $\hat{x} = x/|x|\in\mathbb{S}^{n-1}:=\{x\in\mathbb{R}^n: |x|=1\}$. 
Note that every radiating solution of the Helmholtz equation has the following asymptotic behavior at infinity:
\begin{equation}
 u^s(x)
=\frac{e^{{\rm i}kr}}{r^{\frac{n-1}{2}}}\left\{u^{\infty}(\hat{x})+\mathcal{O}\left(\frac{1}{r}\right)\right\}\quad\mbox{as }\,r:=|x|\rightarrow\infty
\end{equation}
uniformly with respect to all directions $\hx=x/|x|\in \mathbb{S}^{n-1}$. The complex valued function $u^\infty(\hat{x})$ defined on the unit sphere $\mathbb{S}^{n-1}$ is known as the far-field pattern with $\hat{x}\in \mathbb{S}^{n-1}$ denoting the observation direction.

The inverse medium problem (IMP) aims to reconstruct the spatially varying contrast function $q$ of an inhomogeneous medium from boundary or far-field measurements of scattered waves. To tackle this challenging problem, two main categories of numerical approaches have been developed in the literature. One category is nonlinear iterative methods, which minimize the data-misfit functional with regularization to mitigate the ill-posedness of the problem.
Among classical iterative methods, representative ones include the contrast source inversion \cite{vandenberg1997contrast}, Newton-type methods \cite{hanke1999iterative}, subspace-based optimization method \cite{Chen2010SOMInhom}, and the recursive linearization method \cite{BaoTriki}. The other category consists of qualitative methods, which focus on reconstructing the support of the scatterer without the need for full quantitative inversion.  Specifically, we refer to the linear sampling method \cite{colton1996simple}, the factorization method \cite{kirsch1999factorization}, and direct sampling methods \cite{ItoJinZou,LiuIP17}; these methods construct indicator functions from far-field data, enabling non-iterative and initial-guess-free shape recovery. While these methods are robust, they only provide qualitative profiles.
For a comprehensive review of the state-of-the-art mathematical theory and numerical approaches to solving inverse time-harmonic medium scattering problems up to 2019, we  refer to Chapter 11 of the standard monograph \cite{colton2019inverse}. In recent years, significant progress has also been made in applying deep learning to IMPs \cite{desai2025neural,khoo2019switchnet,meng2024kernel,li2024reconstruction,zhou2026exploring,zhou2025recovery}. For the uniqueness and stability analyses of the time domain IMP from backscattering data, we refer to \cite{StefUhl} and \cite{wang1998stability}, respectively.

However, to the best of our knowledge, no practical numerical algorithms based on backscattering data have been reported in the literature. 
The work presented in this paper fills this gap by proposing a quantitative direct sampling method (QDSM) based on the Born approximation, which inherits the efficiency of the linear Born approximation while enabling quantitative contrast reconstruction from backscattering data, without requiring iterative forward solves or additional correction terms.
We first recall two typical time-harmonic acoustic incident fields. The first incident field is the plane wave
\begin{equation}
    u^{inc}(x,\theta, k)=e^{{\rm i}k x\cdot \theta},\quad x\in\R^n,\, k\in \R^+,
\end{equation}
where  $k=\omega/c$ is the wavenumber, $\omega$
 is the angular frequency,  $c$ is the speed of sound and $\theta\in \mathbb{S}^{n-1}$ is the direction of propagation. In the sequel, for an incident plane wave $u^{inc}(x,\theta, k)$, we indicate dependence of the  far-field pattern on the incident direction $\theta$ and the wave number $k$ by writing $u^\infty(\hat{x}, \theta, k)$.
 The other incident field is the point source
 \begin{equation}\label{point sources}
u^{inc}(x,z, k)=\Phi(x,z, k) 
 := \begin{cases}
   \frac{e^{{\rm i}k|x-z|}}{4\pi|x-z|}, & n = 3;\medskip \\
   \frac{{\rm i}}{4} H_0^{(1)}(k|x-z|), & n = 2,
   \end{cases} \quad x\in\R^n\backslash\{z\},\, k\in \R^+,
\end{equation}
where $H_0^{(1)}$ is the Hankel function of the first kind and order zero. For the corresponding scattered field we write $u^s(\cdot, z,k)$ to indicate the dependence of point-source $z\in\R^n$ and the wavenumber $k$.

The inverse medium scattering problems of interest in this paper are to quantitatively determine the contrast $q$ from one of the following two backscattering data sets, which possess a significant application value:
\begin{equation*}
    \mathcal{M}_f:=\{u^\infty(-\theta, \theta, k):\,\, \theta\in \mathrm{S}^{n-1},\,k\in\R^+\} \quad\mbox{and}\quad 
    \mathcal{M}_s:=\{u^s(x,x,k):\,\, x\in\pa B_R,\,k\in\R^+\},
\end{equation*}
where  $\pa B_R$ is a sphere centered at the origin with radius $R$ large enough to contain the contrast support $D$. 
It is worth noting that the uniqueness of these inverse backscattering problems remains a long-standing open issue. In this paper, we make a first step from a numerical perspective by designing a QDSM.
The QDSM was first proposed for acoustic sources \cite{liu2026direct}. A key feature of this method is that it inherits the advantages of qualitative direct sampling methods while enabling quantitative amplitude recovery without the need for iterative forward solvers. We refer to several applications of the quantitative direct sampling method, including those for elastic and electromagnetic sources \cite{LiuShi}, biharmonic sources \cite{liu2025radon}, and convex obstacles \cite{LiLiuShi2025}. The QDSM is based on the linear representation of measurements with respect to the unknowns; therefore, linearization methods are crucial to the design of its indicators. Unlike obstacle scattering, which relies on the physical optics approximation, we employ the Born approximation for medium scattering. In this scenario, the scattering inhomogeneity is weak, such that the total field inside the scatterer can be approximated by the incident field alone. Note that \cite{kirsch2017remarks} established that, for a fixed frequency, the general nonlinear problem and its Born approximation have the same order of ill-posedness. 

The remainder of the paper is organized as follows. 
In Section \ref{Local-uni}, we establish a local uniqueness result for the inhomogeneity from the multi-frequency backscattering far-field patterns.
In Section \ref{section-field-representation}, we derive the representations of the backscattering scattered fields and the corresponding far-field patterns.  In Section \ref{section-QDSM}, we design the indicators of QDSM. Finally, in Section \ref{section-Numerical examples}, comprehensive numerical examples are presented to show the validity and robustness of the proposed QDSM. 

\section{A local uniqueness result}\label{Local-uni}
The uniqueness of recovering the contrast function from the multi-frequency backscattering data is still a long-standing open problem. In this section, we make a first step by showing that the uniqueness from multi-frequency backscattering far-field patterns holds when the contrast function is at most a first-order polynomial perturbation.

‌The point source \eqref{point sources} is exactly the fundamental solution to the Helmholtz equation. Since the radiating scattered field $u^\mathrm{s}$ satisfies
\begin{equation*}
\Delta u^\mathrm{s} + k^2u^\mathrm{s} = -k^2 q u^\mathrm{t},
\end{equation*}
we obtain the following integral representation:
\begin{equation}\label{Lippman-Schwinger}   u^s(x)=k^2\int_{\mathbb{R}^n}q(y)\Phi(x,y,k)u^t(y)\,\mathrm{d}y,\quad x\in\mathbb{R}^n,
\end{equation}
a relation widely known as the Lippmann–Schwinger equation. 
Using the asymptotic behavior of the fundamental solution
\begin{equation}\label{Phi-asy}
    \Phi(x,y,k)=\gamma_n(k)\frac{e^{{\rm i}k|x|}}{|x|^{\frac{n-1}{2}}} \left\{ e^{-{\rm i}k\hat{x}\cdot y} + \mathcal{O}\left( \frac{1}{|x|} \right) \right\} \quad \text{as } |x| \to \infty,
\end{equation}
where
\begin{equation*}
    \gamma_n(k)=\frac{e^{{\rm i}\frac{\pi}{4}}}{\sqrt{8k\pi}} \left( e^{-{\rm i}\frac{\pi}{4}} \sqrt{\frac{k}{2\pi}} \right)^{n-2}, 
    \quad k\in \R^+,
\end{equation*}
we deduce that the far field pattern takes the form
\begin{equation*}
    u^\infty(\hat{x}, \theta, k)=k^2\gamma_n(k)\int_{\mathbb{R}^n}q(y)e^{-{\rm i}k\hat{x}\cdot y}u^t(y, \theta, k)\,\mathrm{d}y,\quad \hat{x}\in\mathbb{S}^{n-1}.
\end{equation*}
Suppose that $q(x)=0$ for $|x|\geq R$ with some $R>0$. 
From \eqref{Lippman-Schwinger}, we further derive that the total field has the following representation
\begin{equation}\label{Lippman-Schwinger2}
    u^t(x,\theta, k)=u^{inc}(x,\theta, k)+k^2\int_{\mathbb{R}^n}q(y)\Phi(x,y,k)u^t(y, \theta, k)\,\mathrm{d}y,\quad x\in\mathbb{R}^n.
\end{equation}
In the Banach space $C(\overline{B_R})$, define the operator $T_q: C(\overline{B_R})\rightarrow C(\overline{B_R})$ by
\begin{equation*}
    (T_q u)(x):=\int_{B_R}\Phi(x,y,k)q(y)u(y)\mathrm{d}y, \quad x\in \overline{B_R}.
\end{equation*}
Rearranging \eqref{Lippman-Schwinger2}, the total field can be written as
\begin{equation*}
    u^t(x,\theta, k)=(I-k^2T_q)^{-1}u^{inc}(x,\theta, k),
\end{equation*}
where $I$ denotes the identity operator on $C(\overline{B_R})$. For sufficiently small $k>0$, the Neumann series expansion of the inverse operator converges, giving
\begin{equation*}
(I - k^2 T_q)^{-1} = I + k^2 T_q + \mathcal{O}(k^4) \quad \text{as } k\rightarrow 0.
\end{equation*}
Substituting the Taylor expansion of the incident plane wave
\begin{equation*}
u^{inc}(x,\theta,k) = e^{{\rm i}k\theta\cdot x} = 1 + {\rm i}k\theta\cdot x - k^2(\theta\cdot x)^2 + \mathcal{O}(k^3) \quad \text{as } k\rightarrow 0,
\end{equation*}
we obtain the following asymptotic behavior for the total field as $k\rightarrow 0$:
\begin{align}\label{ut-asym}
u^t(x,\theta, k)
= \left(I + k^2 T_{q} + \mathcal{O}(k^4)\right)\left(1 + {\rm i}k\theta\cdot x - k^2(\theta\cdot x)^2 + \mathcal{O}(k^3)\right) 
= 1 + {\rm i}k\theta\cdot x + \mathcal{O}(k^2).
\end{align}

\begin{theorem}
    Let $q_1$ and $q_2$ be two contrast functions with compact support in $B_R$. Assume that the corresponding backscattering far-field patterns are
    \begin{equation*}
        u^{\infty}_{q_1}(-\theta, \theta, k)=u^{\infty}_{q_2}(-\theta, \theta, k),\quad \mbox{for all}\,\, \theta\in \mathbb{S}_0, \, k\in (k_{\rm min}, k_{\rm max}),
    \end{equation*}
    where  $\mathbb{S}_0\subset\mathbb{S}^{n-1}$ and $(k_{\rm min}, k_{\rm max})\subset\R^+$. If 
    \begin{equation}\label{q-q-assumption}
        q_1(y)-q_2(y)=\alpha\cdot y +c,\quad y\in B_R
    \end{equation}
    holds for some $\alpha\in\C^n$ and $c\in\C$, then $q_1=q_2$.
\end{theorem}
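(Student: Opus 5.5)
The plan is to use the low-frequency asymptotics \eqref{ut-asym} to extract the zeroth and first moments of $q_1-q_2$ from the backscattering far-field pattern. The hypothesis \eqref{q-q-assumption} then leaves only finitely many unknowns, $c$ and $\alpha$, and these moments determine them.

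\emph{Step 1: extend the data to all frequencies.} The data are known only for $k\in(k_{\rm min},k_{\rm max})$, whereas \eqref{ut-asym} concerns $k\to0$. I would show that $k\mapsto u^\infty_{q_j}(-\theta,\theta,k)$ is real-analytic on $\R^+$. The kernel $k^2\Phi(x,y,k)$ depends analytically on $k$ in a complex neighbourhood of $\R^+$, so $k\mapsto k^2T_q$ is an analytic family of compact operators on $C(\overline{B_R})$. The operator $I-k^2T_q$ is injective for every $k>0$ by the standard uniqueness for the medium problem with $\Re q>-1$ and $\Im q\ge0$, hence invertible by Fredholm theory. Therefore $(I-k^2T_q)^{-1}$, and with it $u^\infty$, is analytic in $k>0$. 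Since the two far fields agree on an interval, the identity theorem gives equality for all $k>0$.

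\emph{Step 2: low-frequency expansion.} Set $\hat x=-\theta$ in the far-field formula and divide by $k^2\gamma_n(k)$. Combining this with \eqref{ut-asym} gives
\begin{equation*}
\frac{u^\infty_{q}(-\theta,\theta,k)}{k^2\gamma_n(k)}=\int_{B_R}q(y)e^{{\rm i}k\theta\cdot y}\bigl(1+{\rm i}k\theta\cdot y+\mathcal{O}(k^2)\bigr)\,\mathrm{d}y=\int_{B_R}q(y)\,\mathrm{d}y+2{\rm i}k\int_{B_R}q(y)\,\theta\cdot y\,\mathrm{d}y+\mathcal{O}(k^2).
\end{equation*}
The key observation is that the nonlinear dependence on $q$, through $k^2T_q$, enters only at order $k^2$. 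The coefficients of $k^0$ and $k^1$ are therefore linear in $q$. Subtracting the expansions for $q_1$ and $q_2$ and comparing powers of $k$ yields, with $p:=q_1-q_2$,
\begin{equation*}
\int_{B_R}p\,\mathrm{d}y=0,\qquad \theta\cdot\int_{B_R}p(y)\,y\,\mathrm{d}y=0\quad\text{for all }\theta\in\mathbb{S}_0 .
\end{equation*}
I would need to check that the $\mathcal{O}(k^2)$ remainder is uniform in $y\in B_R$, which follows from the Neumann series in $C(\overline{B_R})$.

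\emph{Step 3: solve for $\alpha$ and $c$.} Put $p=\alpha\cdot y+c$. By symmetry of the ball, $\int_{B_R}p\,\mathrm{d}y=c|B_R|$, so $c=0$. Likewise $\int_{B_R}(\alpha\cdot y)\,y\,\mathrm{d}y=\kappa\alpha$ with $\kappa=\int_{B_R}y_1^2\,\mathrm{d}y>0$, so $\theta\cdot\alpha=0$ for all $\theta\in\mathbb{S}_0$. If $\mathbb{S}_0$ contains $n$ linearly independent directions, for instance if it has nonempty relative interior, this gives $\alpha=0$. I expect the intended hypothesis on $\mathbb{S}_0$ to be of this kind, and I would state it explicitly. (Alternatively, since both $q_j$ are compactly supported in $B_R$, $p$ vanishes on an open annulus near $\partial B_R$, and an affine function vanishing on an open set is identically zero; this remark already forces $\alpha=0$, $c=0$, but the moment argument is the robust one.)

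The main obstacle is Step 1. It requires a rigorous analytic continuation in $k$, including invertibility of $I-k^2T_q$ for every real $k>0$, and the asymptotics must be uniform enough to justify comparing coefficients.
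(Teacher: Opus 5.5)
Your argument is correct, and its skeleton is the paper's. You extend the equality of the data to all $k>0$ by analyticity, extract the moments $\int_{B_R}(q_1-q_2)\,\mathrm{d}y$ and $\int_{B_R}(q_1-q_2)\,\theta\cdot y\,\mathrm{d}y$ from the $k\to0$ expansion \eqref{ut-asym}, and use the symmetry of the ball to get first $c=0$ and then $\alpha=0$.

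The genuine difference is how $\mathbb{S}_0$ is handled. The paper also continues analytically in $\theta$ to obtain the data on all of $\mathbb{S}^{n-1}$. That tacitly needs $\mathbb{S}_0$ to be a uniqueness set for real-analytic functions on the sphere (e.g.\ relatively open), which the statement never assumes. You stop at $\theta\cdot\alpha=0$ for $\theta\in\mathbb{S}_0$ and need only that $\mathbb{S}_0$ span $\R^n$. This is strictly weaker, since finitely many directions suffice, and you state it explicitly. You also supply the Fredholm/injectivity reason for analyticity in $k$, which the paper merely asserts.

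Your parenthetical annulus observation is also correct. If the supports are compact in the open ball, then $\alpha\cdot y+c$ vanishes on a nonempty open shell near $\partial B_R$. So the theorem as literally stated needs no scattering data at all, and the moment argument carries real content only when the supports may reach $\partial B_R$.

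One small correction to Step 2. In two dimensions $\Phi(x,y,k)$ contains the term $-\frac{1}{2\pi}\ln k$, so $\|T_q\|$ grows like $|\ln k|$. The Neumann-series remainder is therefore $\mathcal{O}(k^2|\ln k|)$ rather than $\mathcal{O}(k^2)$. This is still $o(k)$, so comparing the $k^0$ and $k^1$ coefficients is unaffected; the paper's \eqref{ut-asym} has the same imprecision.
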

\begin{proof}
Recall that the backscattering far-field pattern is given by 
    \begin{equation}\label{bk-far-1}
        u^\infty_q(-\theta, \theta, k)=k^2\gamma_n(k)\int_{\mathbb{R}^n}q(y)e^{{\rm i}k\theta\cdot y}u^t(y, \theta, k)\,\mathrm{d}y
    \end{equation}
for all $\theta\in \mathbb{S}_0$, $k\in (k_{\min}, k_{\max})$, and $q=q_1,q_2$.
This implies that the backscattering far-field pattern depends analytically on the incident direction $\theta$ and the wave number $k$.
Consequently, we have 
\begin{equation}\label{bk-far-2}
        u^{\infty}_{q_1}(-\theta, \theta, k)=u^{\infty}_{q_2}(-\theta, \theta, k),\quad \mbox{for all}\,\, \theta\in \mathbb{S}^{n-1}, \, k\in \R^+.
\end{equation}
Letting $k\rightarrow 0$, employing the asymptotic \eqref{ut-asym}, and noting that both $q_1$ and $q_2$ are compactly supported in $B_R$, we deduce from \eqref{bk-far-1} and \eqref{bk-far-2} that
\begin{equation}\label{q1-q2}
    \int_{B_R}[q_1(y)-q_2(y)] \mathrm{d}y=0
\end{equation}
and
\begin{equation}\label{q1-q2theta}
\int_{B_R}[q_1(y)-q_2(y)]\theta\cdot y \,\mathrm{d}y=0\quad \mbox{for all}\,\,\theta\in\mathbb{S}^{n-1}.
\end{equation}
Let $r(y) := q_1(y) - q_2(y)$. By assumption \eqref{q-q-assumption},
\begin{equation*}
r(y) = \alpha \cdot y + c
\end{equation*}
for some $\alpha \in \mathbb{C}^n$ and $c \in \mathbb{C}$. We will show that $r \equiv 0$.

Substituting the form of $r$ into the integral condition \eqref{q1-q2} yields
\begin{equation*}
0=\int_{B_R} (\alpha \cdot y + c) \, \mathrm{d}y
= \int_{B_R} \alpha \cdot y \, \mathrm{d}y + c \int_{B_R} \mathrm{d}y.
\end{equation*}
Since each coordinate function $y_i$ is odd on $B_R$, it follows that
\begin{equation*}
\int_{B_R} \alpha \cdot y \, \mathrm{d}y = \sum_{i=1}^n \alpha_i \int_{B_R} y_i \, \mathrm{d}y = 0.
\end{equation*}
Thus
\begin{equation*}
c \int_{B_R} \mathrm{d}y=c|B_R|=0,
\end{equation*}
which implies $c = 0$. Therefore $r(y) = \alpha \cdot y$.

Substituting this into the integral condition \eqref{q1-q2theta} gives
\begin{equation*}
\int_{\mathbb{R}^n} (\alpha \cdot y)(\theta \cdot y) \, \mathrm{d}y = 0 \quad \mbox{for all}\,\,\theta \in \mathbb{S}^{n-1}.
\end{equation*}
Expanding the product yields
\begin{equation*}
(\alpha \cdot y)(\theta \cdot y)
= \left( \sum_{i=1}^n \alpha_i y_i \right)
\left( \sum_{j=1}^n \theta_j y_j \right)
= \sum_{i=1}^n \sum_{j=1}^n \alpha_i \theta_j y_i y_j.
\end{equation*}
Integrating term by term, we obtain
\begin{equation*}
\int_{B_R} (\alpha \cdot y)(\theta \cdot y) \, \mathrm{d}y
= \sum_{i=1}^n \sum_{j=1}^n \alpha_i \theta_j \int_{B_R} y_i y_j \, dy.
\end{equation*}
For $i \neq j$, the integral vanishes because $y_i y_j$ is odd. For $i = j$, let
\begin{equation*}
A := \int_{B_R} y_i^2 \, \mathrm{d}y,
\end{equation*}
which is positive and independent of $i$. Hence
\begin{equation*}
\sum_{i=1}^n \alpha_i \theta_i \, A
= (\alpha \cdot \theta)\, A.
\end{equation*}
The condition therefore becomes
\begin{equation*}
(\alpha \cdot \theta)\, A = 0 \quad\mbox{for all}\,\, \theta \in \mathbb{S}^{n-1}.
\end{equation*}
Since $A > 0$, we have $\alpha\cdot\theta=0$ for all $\theta\in\mathbb{S}^{n-1}$, so $\alpha=0$.
We conclude that $r(y) \equiv 0$, i.e., $q_1 = q_2$.
\end{proof}

\section{The field representations under Born approximation} \label{section-field-representation}
The Fourier transform of a function $f$ is defined as
\begin{equation*}
\widehat{f}(\xi) = \mathcal{F} [f](\xi) := \int_{\R^n} f(x) e^{-{\rm i} \xi \cdot x} \, \mathrm{d}x, \quad \xi \in \R^n.
\end{equation*}
Meanwhile, the inverse Fourier transform is given by
\begin{equation*}
f(x) = \mathcal{F}^{-1}[\widehat{f}](x) := \frac{1}{(2\pi)^n} \int_{\R^n} \widehat{f}(\xi) e^{{\rm i} \xi \cdot x} \, \mathrm{d}\xi,\quad x \in \R^n.
\end{equation*}

\subsection{Backscattering far-field patterns of plane waves} \label{subsection-far field}
For an incident plane wave $u^{\text{inc}}(y, \theta,k)=e^{{\rm i}k\theta\cdot y}$, under the Born approximation, the Lippmann–Schwinger equation becomes
\begin{equation*}
u^s_b(x,\theta,k)=k^2\int_{\mathbb{R}^n}q(y)\Phi(x,y,k)u^{\text{inc}}(y, \theta,k)\,\mathrm{d}y,\quad x\in\mathbb{R}^n,\, k\in \R^+.
\end{equation*}
Using the asymptotic behavior \eqref{Phi-asy} of the fundamental solution,
we derive the corresponding far-field pattern
\begin{equation*}
    u^\infty_b(\hat{x}, \theta, k)=k^2\gamma_n(k)\int_{\mathbb{R}^n}q(y)e^{-{\rm i}k\hat{x}\cdot y}e^{{\rm i}k\theta\cdot y}\,\mathrm{d}y,\quad \hat{x}\in \mathbb{S}^{n-1},\, k\in \R^+.
\end{equation*}
Specifically, by setting $\hat{x} = -\theta$, we obtain the backscattering far-field pattern representation
\begin{equation}\label{far field b}
    u^\infty_b(-\theta,\theta,k)=k^2\gamma_n(k)\int_{\mathbb{R}^n}q(y)e^{2{\rm i}k\theta\cdot y}\,\mathrm{d}y,\quad \theta\in \mathbb{S}^{n-1},\, k\in \R^+.
\end{equation}

\subsection{Backscattering scattered fields of point sources} \label{subsection-near field}
For an incident point source $u^{\text{inc}} = \Phi(y, z, k)$, under the Born approximation, the Lippmann–Schwinger equation \eqref{Lippman-Schwinger} is approximated by
\begin{equation*}
    u^s_b(x,z,k)=k^2\int_{\mathbb{R}^n}q(y)\Phi(x,y,k)\Phi(y,z,k)\,dy,\quad x\in\mathbb{R}^n,\, k\in \R^+.
\end{equation*}
Setting $x = z \in \partial B_R$ and applying the reciprocity relation $\Phi(x,y,k)=\Phi(y,x,k)$ for $x\neq y$, we obtain the representation for the backscattering scattered field
\begin{equation}\label{scattered field b}
u^s_b(x,x,k)=k^2\int_{\mathbb{R}^n}q(y)\Phi^2(x,y,k)\,dy,\quad x\in \partial B_R,\, k\in \R^+.
\end{equation}

\section{The quantitative direct sampling method} \label{section-QDSM}

Let $G$ denote the sampling region such that $D\subset G\subset\mathbb{R}^n$. For each sampling point $z\in G$, we define the following two indicators
\begin{equation}\label{I-infty}
   I^\infty(z) := \frac{1}{\pi^n}\int^{\infty}_{0}\gamma^{-1}_n(k)k^{n-3}\int_{\mathbb{S}^{n-1}}u^{\infty}_b(-\theta, \theta, k)e^{-2{\rm i}k\theta\cdot z}\,\mathrm{d}s(\theta)\mathrm{d}k, \quad z\in G
\end{equation}
and
\begin{equation}\label{I-s}
   I^s(z) := \frac{R^{n-1}}{\pi^n}\int^{\infty}_{0}e^{-2{\rm i}kR}\gamma^{-2}_n(k)k^{n-3}\int_{\mathbb{S}^{n-1}}u^s_b(x,x, k)e^{2{\rm i}k\hat{x}\cdot z}\,\mathrm{d}s(\hat{x})\mathrm{d}k, \quad z\in G.
\end{equation}

\begin{theorem}\label{thm:Iinfty=q}
\begin{equation*}
    I^{\infty}(z) = q(z), \quad z\in G.
\end{equation*}
\end{theorem}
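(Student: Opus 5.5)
Substituting the Born representation \eqref{far field b} of the backscattering far-field pattern into the definition \eqref{I-infty} cancels the factor $\gamma_n(k)$ and leaves
\begin{equation*}
I^\infty(z)=\frac{1}{\pi^n}\int_0^\infty k^{n-1}\int_{\mathbb{S}^{n-1}}\int_{\mathbb{R}^n}q(y)e^{2{\rm i}k\theta\cdot (y-z)}\,\mathrm{d}y\,\mathrm{d}s(\theta)\,\mathrm{d}k .
\end{equation*}
The inner $y$-integral is the Fourier transform of $q$: with $\xi=-2k\theta$ it equals $\widehat{q}(-2k\theta)e^{-2{\rm i}k\theta\cdot z}$. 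The key observation is that $k^{n-1}\,\mathrm{d}k\,\mathrm{d}s(\theta)$ is exactly the volume element in polar coordinates $\eta=k\theta\in\mathbb{R}^n$. Hence
\begin{equation*}
I^\infty(z)=\frac{1}{\pi^n}\int_{\mathbb{R}^n}\widehat{q}(-2\eta)e^{-2{\rm i}\eta\cdot z}\,\mathrm{d}\eta .
\end{equation*}

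Next we change variables $\xi=-2\eta$, so that $\mathrm{d}\eta=2^{-n}\mathrm{d}\xi$. This gives
\begin{equation*}
I^\infty(z)=\frac{1}{(2\pi)^n}\int_{\mathbb{R}^n}\widehat{q}(\xi)e^{{\rm i}\xi\cdot z}\,\mathrm{d}\xi=\mathcal{F}^{-1}[\widehat q](z)=q(z)
\end{equation*}
by the Fourier inversion formula stated at the beginning of Section \ref{section-field-representation}. The normalization $\pi^{-n}$ in \eqref{I-infty} is designed precisely to absorb the Jacobian $2^{-n}$ together with $(2\pi)^{-n}$. So the computation reduces to bookkeeping of constants and signs, which I would verify carefully. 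In particular, the factor $k^{n-3}$ multiplied by the $k^2$ from \eqref{far field b} gives $k^{n-1}$, as required.

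The main obstacle is justification rather than algebra. The $k$-integral over $(0,\infty)$ and the interchange of the $y$- and $(k,\theta)$-integrations need not converge absolutely. Fourier inversion holding pointwise at $z$ requires some regularity of $q$, for instance $q\in L^1$ with $\widehat q\in L^1$, or continuity of $q$ at $z$. I would state the identity under the assumption $\widehat q\in L^1(\mathbb{R}^n)$, which makes Fubini applicable after passing to polar coordinates. Alternatively, one can interpret the outer integral as $\lim_{K\to\infty}\int_0^K$ in the $L^2$ or distributional sense. Since $q$ has compact support, $\widehat q$ is entire, so no issues arise near $k=0$; the only concern is decay at large $k$.
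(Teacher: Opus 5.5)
Your proof is correct and is essentially the paper's argument: the paper first substitutes $\xi=2k\theta$ (using $k^{n-1}\,\mathrm{d}k\,\mathrm{d}s(\theta)=2^{-n}\mathrm{d}\xi$) and then inserts the Born formula to get $\mathcal{F}\mathcal{F}^{-1}[q](z)$, whereas you insert first and take $\xi=-2k\theta$ to get $\mathcal{F}^{-1}\mathcal{F}[q](z)$, which is the same computation with the sign placed differently. Your regularity discussion goes beyond the paper, and $\widehat q\in L^1(\mathbb{R}^n)$ is the right hypothesis to commit to; mere continuity of $q$ at $z$ would not make the improper integral $\lim_{K\to\infty}\int_0^K$ converge (for $q=\chi_{B_\rho}$ in $\mathbb{R}^3$ the truncated indicator at $z=0$ equals $\frac{2}{\pi}\int_0^{2\rho K}\frac{\sin t}{t}\,\mathrm{d}t-\frac{2}{\pi}\sin(2\rho K)$, which oscillates).
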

\begin{proof}
    Define $\xi:=2k\theta$, then 
    \begin{equation*}
        \theta=\xi/|\xi|\quad\mbox{and}\quad k=|\xi|/2,
    \end{equation*} 
    we rewrite \eqref{I-infty}
    \begin{equation*}
        I^\infty(z) = \frac{1}{(2\pi)^n}\int_{\R^n}\gamma^{-1}_n(k)k^{-2}u^{\infty}_b(-\theta, \theta, k)e^{-{\rm i}\xi\cdot z}\,\mathrm{d}\xi, \quad z\in G
    \end{equation*}
    Inserting \eqref{far field b} into the above equality, we have
\begin{align*}
I^\infty(z) 
&= \frac{1}{(2\pi)^n}\int_{\mathbb{R}^n}\int_{\mathbb{R}^n}q(y)e^{{\rm i}\xi\cdot y}\,\mathrm{d}y e^{-{\rm i}\xi\cdot z}\,\mathrm{d}\xi \notag\\
&= \int_{\mathbb{R}^n}\mathcal{F}^{-1} [q](\xi) e^{-{\rm i}\xi\cdot z}\,\mathrm{d}\xi \notag\\
&= \mathcal{F}\mathcal{F}^{-1} [q](z) \notag\\
&= q(z), \quad z\in G. 
\end{align*}
\end{proof}

\begin{theorem}\label{thm: near-field approximation}
Assume that the measurement surface is $\partial B_R$, then 
\begin{equation*}
    I^{s}(z) = q(z)+\mathcal{O}(R^{-1}), \quad z\in G.
\end{equation*}
\end{theorem}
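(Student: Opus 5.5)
The plan is to reduce $I^s$ to the far-field indicator $I^\infty$ of Theorem \ref{thm:Iinfty=q}. Substitute the near-field representation \eqref{scattered field b} into \eqref{I-s} and replace $\Phi^2(x,y,k)$, for $x=R\hat{x}\in\partial B_R$ and $y\in D$, by its far-field asymptotics \eqref{Phi-asy}. Squaring \eqref{Phi-asy} gives
\begin{equation*}
\Phi^2(R\hat{x},y,k)=\gamma_n^2(k)\frac{e^{2{\rm i}kR}}{R^{n-1}}\left\{e^{-2{\rm i}k\hat{x}\cdot y}+\mathcal{O}\left(\frac{1}{R}\right)\right\},
\end{equation*}
uniformly for $y$ in the compact set $\overline{D}$ and $\hat x\in\mathbb{S}^{n-1}$. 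The prefactors $R^{n-1}$, $e^{-2{\rm i}kR}$ and $\gamma_n^{-2}(k)$ built into \eqref{I-s} are exactly designed to cancel $\gamma_n^2(k)e^{2{\rm i}kR}R^{1-n}$.

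After this cancellation, the leading term of $I^s(z)$ is
\begin{equation*}
\frac{1}{\pi^n}\int_0^\infty k^{n-1}\int_{\mathbb{S}^{n-1}}\int_{\mathbb{R}^n}q(y)e^{-2{\rm i}k\hat{x}\cdot y}\,\mathrm{d}y\,e^{2{\rm i}k\hat{x}\cdot z}\,\mathrm{d}s(\hat{x})\,\mathrm{d}k .
\end{equation*}
Apply the same change of variables as in the proof of Theorem \ref{thm:Iinfty=q}, namely $\xi=2k\hat x$, so that $\mathrm{d}\xi=2^n k^{n-1}\mathrm{d}k\,\mathrm{d}s(\hat x)$. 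This turns the expression into $(2\pi)^{-n}\int_{\R^n}\widehat{q}(\xi)e^{{\rm i}\xi\cdot z}\,\mathrm{d}\xi=\mathcal{F}^{-1}\mathcal{F}[q](z)=q(z)$. The sign conventions here are the direct Fourier/inverse Fourier pair, consistent with the paper's definitions. The same computation shows that $I^s$ equals $I^\infty$ computed from the data $R^{n-1}e^{-2{\rm i}kR}\gamma_n^{-1}(k)u^s_b(R\hat x,R\hat x,k)$ at direction $\theta=-\hat x$. This formulation allows Theorem \ref{thm:Iinfty=q} to be quoted directly if preferred.

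The remainder consists of the $\mathcal{O}(R^{-1})$ term of $\Phi^2$ integrated against $q$ and then against the weighted $k$-integral. The correction comes from the next term in the expansion of $\Phi$: $|x-y|=R-\hat x\cdot y+\mathcal{O}(|y|^2/R)$, together with the amplitude factor $|x-y|^{-(n-1)/2}$. To make it precise, write
\begin{equation*}
R^{n-1}e^{-2{\rm i}kR}\gamma_n^{-2}\Phi^2=e^{-2{\rm i}k\hat x\cdot y}\bigl(1+\rho_R(\hat x,y,k)\bigr),
\end{equation*}
and bound $\rho_R$.

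The main obstacle is the $k$-integral over $(0,\infty)$. The phase error $k|y|^2/R$ is not uniformly small in $k$, and for $n=2$ the Hankel asymptotics degenerate as $k\to0$. I would handle this in the same formal spirit as the proof of Theorem \ref{thm:Iinfty=q}: regard the indicator as an inverse Fourier transform in the distributional sense and treat $\rho_R$ for fixed $k$, with $|\rho_R|\le C(k,D)/R$. If more rigor is wanted, the cleanest route is to assume $q$ smooth, so that $\widehat q$ decays rapidly, and to truncate to $k\le K$ (effectively, bandlimited data). The remainder is then bounded by $C(K,D)R^{-1}$, and the tail beyond $K$ is controlled by the decay of $\widehat q$. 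This yields $I^s(z)=q(z)+\mathcal{O}(R^{-1})$.
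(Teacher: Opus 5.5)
Your argument is essentially the paper's. You expand $\Phi^2$ via \eqref{Phi-asy} so that the factors $R^{n-1}e^{-2{\rm i}kR}\gamma_n^{-2}(k)$ in \eqref{I-s} cancel, substitute $\xi=2k\hat{x}$, read off $\mathcal{F}^{-1}\mathcal{F}[q](z)=q(z)$, and declare the rest $\mathcal{O}(R^{-1})$; the paper works at the same formal level, pulling the non-$k$-uniform $\mathcal{O}(1/R)$ straight out of the $y$- and $\xi$-integrals.

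One caution on your optional rigorous route: with a fixed cutoff $K$ you get $C(K)R^{-1}$ plus an $R$-independent tail. That tail comes from the near-field integrand, not from $\widehat{q}$, so the truncation argument does not yield $\mathcal{O}(R^{-1})$. You would instead need the remainder bounded in $L^1$ of $\xi=2k\hat{x}$, uniformly in $R$; for smooth $q$ this follows from non-stationary phase in $y$, using $|\nabla_y|x-y||=1$.
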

\begin{proof}
With the help of \eqref{Phi-asy}, we have
\begin{equation*}
    \Phi^2(x,y,k)=\gamma_n^2(k)\frac{e^{2{\rm i}kR}}{R^{n-1}} \left\{ e^{-2{\rm i}k\hat{x}\cdot y} + e^{-{\rm i}k\hat{x}\cdot y}\mathcal{O}\left( \frac{1}{R} \right) \right\} \quad \text{as } R=|x| \to \infty.
\end{equation*}
Combining this with \eqref{scattered field b}, we deduce that
\begin{equation}\label{scattered field b2}
    u^s_b(x,x,k)=k^2\gamma_n^2(k)\frac{e^{2{\rm i}kR}}{R^{n-1}}\int_{\mathbb{R}^n}q(y)\left\{ e^{-2{\rm i}k\hat{x}\cdot y} + e^{-{\rm i}k\hat{x}\cdot y}\mathcal{O}\left( \frac{1}{R} \right)\right\}\,\mathrm{d}y,\quad x\in \partial B_R,\, k\in \R^+.
\end{equation}
    Define $\eta:=2k\hat{x}$, then 
    \begin{equation*}
        \hat{x}=\xi/|\xi|\quad\mbox{and}\quad k=|\xi|/2,
    \end{equation*} 
    we rewrite \eqref{I-s}
    \begin{equation*}
        I^s(z) = \frac{R^{n-1}}{(2\pi)^n}\int_{\R^n}e^{-2{\rm i}kR}\gamma^{-2}_n(k)k^{-2}u^{s}_b(x,x, k)e^{{\rm i}\xi\cdot z}\,\mathrm{d}\xi, \quad z\in G.
    \end{equation*}
    Inserting \eqref{scattered field b2} into the above equality, we have
\begin{align*}
I^s(z) 
&= \frac{1}{(2\pi)^n}\int_{\mathbb{R}^n}\left[\int_{\mathbb{R}^n}q(y)e^{-{\rm i}\xi\cdot y}\,\mathrm{d}y +\mathcal{O}\Big( \frac{1}{R} \Big)\int_{\mathbb{R}^n}q(y)e^{-{\rm i}\frac{\xi}{2}\cdot y}\,\mathrm{d}y\right]e^{{\rm i}\xi\cdot z}\,\mathrm{d}\xi \notag\\
&= \frac{1}{(2\pi)^n}\int_{\mathbb{R}^n}\left[\mathcal{F} [q](\xi)+ \mathcal{O}\Big( \frac{1}{R} \Big)\mathcal{F} [q](\frac{\xi}{2}) \right]e^{{\rm i}\xi\cdot z}\,\mathrm{d}\xi \notag\\
&= \mathcal{F}^{-1}\mathcal{F}^{-1} [q](z) +\mathcal{O}(R^{-1})\mathcal{F}^{-1}\mathcal{F}^{-1} [q](2z)\notag\\
&= q(z)+\mathcal{O}(R^{-1}), \quad z\in G.
\end{align*}
\end{proof}


\section{Numerical examples and discussions}\label{section-Numerical examples}

In this section, several numerical examples are provided to illustrate how the proposed method behaves in typical scenarios. In subsequent numerical examples, synthetic forward data were generated by solving the Lippmann–Schwinger equation \eqref{Lippman-Schwinger} using the \textbf{IPscatt} software package \cite{buergel2019ipscatt}. To test the stability of the algorithm, the synthetic data are perturbed by artificial Gaussian noise via the \textbf{addNoise} function in \textbf{IPscatt} with a relative noise level $\delta\geq 0$. In particular, $\delta=0$ corresponds to the noise-free case. We shall utilize $\delta=5\%$ in what follows unless otherwise specified. 

Note that, from a numerical perspective, it is infeasible to integrate the wave number $k$ from $0$ to $+\infty$. Let $I^{\infty}_{\mathcal{K}}$ and $I^{s}_{\mathcal{K}}$ denote, respectively, the indicators \eqref{I-infty} and \eqref{I-s} with the integration interval $\int_0^{\infty}$ replaced by $\int_{k_{\rm min}}^{k_{\rm max}}$.
Denote by $H^{2}(\R^n)$ the Sobolev space endowed with the norm
\begin{equation*}
\|f\|_{H^{2}(\R^n)}=\frac{1}{(2\pi)^n}\left(\int_{\R^n}\left(1+|\xi|^2\right)^2|\widehat{f}(\xi)|^2{\mathrm d}\xi\right)^{1/2}.
\end{equation*}

Following the framework of source reconstruction in \cite{LiuWang-AML}, we derive an error estimate between $I^{\infty}_{\mathcal{K}}$ and $I^{\infty}$.
\begin{theorem}\label{error analysis}
Let the contrast function $q\in H^{2}(\R^n)$ with compact support $\Omega \subset B_R$, then
\begin{equation*}
|I^{\infty}(z)-I^{\infty}_{\mathcal{K}}(z)|\leq 
\begin{cases}
		\,\sqrt{\pi} \left(k_{\rm max}^{-1}\|q\|_{H^2(\R^2)}+k_{\rm min}\|q\|_{L^2(\R^2)}\right), \quad z\in \R^2, \medskip \\
		  \,2\sqrt{\pi} \left(k_{\rm max}^{-1/2}\|q\|_{H^2(\R^3)}+\frac{\sqrt{3}}{3} k_{\rm min}^{3/2}\|q\|_{L^2(\R^3)}\right), \quad z\in \R^3.
\end{cases}
\end{equation*}
\end{theorem}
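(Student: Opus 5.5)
The plan is to follow the proof of Theorem \ref{thm:Iinfty=q} and rewrite the difference $I^{\infty}(z)-I^{\infty}_{\mathcal{K}}(z)$ as a Fourier integral of $\widehat{q}$ over a region of frequency space. With the change of variables $\xi=2k\theta$, we have $k=|\xi|/2$ and $\mathrm{d}\xi=2^n k^{n-1}\mathrm{d}k\,\mathrm{d}s(\theta)$. The factor $\gamma_n^{-1}(k)k^{n-3}$ combined with \eqref{far field b} gives exactly the inverse Fourier kernel, as in that proof. Restricting $k$ to $(k_{\rm min},k_{\rm max})$ restricts $\xi$ to the annulus $2k_{\rm min}<|\xi|<2k_{\rm max}$. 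Hence
\begin{equation*}
I^{\infty}(z)-I^{\infty}_{\mathcal{K}}(z)=\frac{1}{(2\pi)^n}\left(\int_{|\xi|<2k_{\rm min}}+\int_{|\xi|>2k_{\rm max}}\right)\widehat{q}(\xi)\,e^{{\rm i}\xi\cdot z}\,\mathrm{d}\xi,
\end{equation*}
up to the sign convention in the exponent, which does not affect the modulus bound. I would split the estimate into these low-frequency and high-frequency parts and bound each by Cauchy--Schwarz.

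\textbf{Low frequencies.} Write
\begin{equation*}
\int_{|\xi|<2k_{\rm min}}|\widehat{q}|\,\mathrm{d}\xi\le |B_{2k_{\rm min}}|^{1/2}\Big(\int|\widehat{q}|^2\Big)^{1/2}.
\end{equation*}
The ball volume is $\pi(2k_{\rm min})^2$ in $\R^2$ and $\frac{4\pi}{3}(2k_{\rm min})^3$ in $\R^3$. Its square root therefore produces $2\sqrt{\pi}\,k_{\rm min}$ in two dimensions and $\frac{4\sqrt{2\pi}}{\sqrt3}\,k_{\rm min}^{3/2}$ in three dimensions. With the paper's normalisation, the $L^2$ norm is taken analogous to the displayed $H^2$ norm, i.e. $\|q\|_{L^2}=(2\pi)^{-n}\big(\int|\widehat{q}|^2\,\mathrm{d}\xi\big)^{1/2}$. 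Under this convention the factor $(2\pi)^{-n}$ is absorbed into the norm, giving a term of the form $C_n k_{\rm min}^{n/2}\|q\|_{L^2}$.

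\textbf{High frequencies.} Insert the weight $(1+|\xi|^2)$ and apply Cauchy--Schwarz:
\begin{equation*}
\int_{|\xi|>2k_{\rm max}}|\widehat{q}|\,\mathrm{d}\xi\le\Big(\int_{|\xi|>2k_{\rm max}}(1+|\xi|^2)^{-2}\,\mathrm{d}\xi\Big)^{1/2}\Big(\int(1+|\xi|^2)^2|\widehat{q}|^2\Big)^{1/2}.
\end{equation*}
The second factor equals $(2\pi)^n\|q\|_{H^2}$ by the paper's definition of the norm. For the first factor, bound $(1+|\xi|^2)^{-2}\le|\xi|^{-4}$ and integrate radially. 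In $\R^2$ this gives
\begin{equation*}
2\pi\int_{2k_{\rm max}}^\infty r^{-3}\,\mathrm{d}r=\frac{\pi}{4k_{\rm max}^2},
\end{equation*}
whose square root is $\frac{\sqrt{\pi}}{2}k_{\rm max}^{-1}$. In $\R^3$ this gives
\begin{equation*}
4\pi\int_{2k_{\rm max}}^\infty r^{-2}\,\mathrm{d}r=\frac{2\pi}{k_{\rm max}},
\end{equation*}
whose square root is $\sqrt{2\pi}\,k_{\rm max}^{-1/2}$. This reproduces the stated rates $k_{\rm max}^{-1}$ for $n=2$ and $k_{\rm max}^{-1/2}$ for $n=3$. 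The assumption $q\in H^2$ is exactly what makes the $\R^3$ tail integral finite, since $|\xi|^{-4}$ is integrable at infinity only for $n<4$.

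\textbf{Main obstacle.} The difficulty is bookkeeping rather than analysis: tracking the constants $(2\pi)^{-n}$, the powers of $2$ from the scaling $\xi=2k\theta$, and the sign in the Fourier exponent consistently with the paper's normalisation of the norms. I would first fix the convention that $\|q\|_{L^2}$ carries the same $(2\pi)^{-n}$ factor as $\|q\|_{H^2}$. I would then check that the resulting numerical constants are bounded by $\sqrt{\pi}$ (for $n=2$) and by $2\sqrt{\pi}$ and $\frac{2\sqrt{\pi}\sqrt3}{3}$ (for $n=3$). If the constants come out slightly different, the structure of the bound, namely a low-frequency term of order $k_{\rm min}^{n/2}$ and a high-frequency term of order $k_{\rm max}^{-(4-n)/2}$, is what the argument establishes, and the constants would be adjusted accordingly.
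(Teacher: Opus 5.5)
\textbf{Overall structure.} Your decomposition matches the paper's proof. You split the inversion integral into the parts $|\xi|<2k_{\rm min}$ and $|\xi|>2k_{\rm max}$, then apply Cauchy--Schwarz against $\chi_B$ and against $(1+|\xi|^2)^{-2}$ respectively. Your high-frequency constants $\frac{\sqrt{\pi}}{2}$ (for $n=2$) and $\sqrt{2\pi}$ (for $n=3$) already lie below $\sqrt{\pi}$ and $2\sqrt{\pi}$.

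\textbf{The gap: the low-frequency constant.} You leave this constant open, and with your normalisation it cannot be closed. Under your convention $\|q\|_{L^2}=(2\pi)^{-n}\|\widehat q\|_{L^2}$, the low-frequency term is $|B_{2k_{\rm min}}|^{1/2}\|q\|_{L^2}$. That gives $2\sqrt{\pi}\,k_{\rm min}$ and $4\sqrt{2\pi/3}\,k_{\rm min}^{3/2}$, which exceed the claimed $\sqrt{\pi}\,k_{\rm min}$ and $\frac{2\sqrt{3\pi}}{3}k_{\rm min}^{3/2}$. No sharper estimate can repair this, because Cauchy--Schwarz is essentially sharp here. Let $\widehat q$ approach $\chi_{B_{2k_{\rm min}}}(\xi)e^{-{\rm i}\xi\cdot z}$ in $L^2$ (via smooth compactly supported truncations) and let $k_{\rm max}\to\infty$. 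Then the stated inequality fails under that normalisation. So ``adjusting the constants accordingly'' means proving a different, weaker inequality.

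\textbf{Why the paper's route is not available to you.} The paper's proof reaches $\sqrt{\pi}$ by integrating over $|\xi|\le k_{\rm min}$ and $|\xi|\ge k_{\rm max}$, which drops the factor $2$ in $\xi=2k\theta$. It also quotes the same normalisation $\|q\|_{L^2}=(2\pi)^{-2}\|\widehat q\|_{L^2}$ as Parseval. Your regions $|\xi|<2k_{\rm min}$ and $|\xi|>2k_{\rm max}$ are the correct ones, so you cannot borrow that route.

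\textbf{The fix.} Read $\|q\|_{L^2}$ as the standard norm and use the actual Plancherel identity $\|\widehat{q}\|_{L^2(\R^n)}=(2\pi)^{n/2}\|q\|_{L^2(\R^n)}$. Then
\begin{equation*}
\frac{1}{(2\pi)^n}\Big|\int_{|\xi|<2k_{\rm min}}\widehat{q}(\xi)e^{{\rm i}\xi\cdot z}\,\mathrm{d}\xi\Big|\le (2\pi)^{-n/2}\,|B_{2k_{\rm min}}|^{1/2}\,\|q\|_{L^2(\R^n)}.
\end{equation*}
This equals $\pi^{-1/2}k_{\rm min}\|q\|_{L^2}$ for $n=2$ and $\frac{2}{\sqrt{3}\,\pi}k_{\rm min}^{3/2}\|q\|_{L^2}$ for $n=3$, both within the stated constants. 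Combined with your high-frequency estimate under the paper's $H^2$ normalisation, your argument then proves the theorem as stated, with the correct change of variables and with smaller constants.
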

\begin{proof}
For simplicity, we present the two-dimensional case. The three-dimensional counterpart can be treated similarly. From the arguments in the proof of Theorem \ref{thm:Iinfty=q}, we obtain 
\begin{align}
|I^{\infty}(z)-I^{\infty}_{\mathcal{K}}(z)|
= \left |\frac{1}{(2\pi)^2}\left(\int_{|\xi|\geq k_{\rm max}}+\int_{|\xi|\leq k_{\rm min}}\right)\int _{\R^2}q(y)e^{{\rm i}k(z-y)\cdot \xi}{\rm d}y {\rm d}{\xi}\right |,\quad z\in \R^2.
\label{truncated-error-analysis}
\end{align}
By applying the triangle inequality, Parseval's identity
\begin{equation*}
    \|q\|_{L^2(\R^2)}=\frac{1}{(2\pi)^2}\|\widehat{q}\|_{L^2(\R^2)},
\end{equation*}
and the Cauchy-Schwarz inequality, we deduce that
\begin{align*}
|I^{\infty}(z)-I^{\infty}_{\mathcal{K}}(z)|
&\leq \Bigg |\frac{1}{(2\pi)^2}\int_{|\xi|\geq k_{\rm max}}\widehat{q}(\xi)e^{{\rm i}z\cdot \xi}{\rm d}\xi\Bigg |+\Bigg |\frac{1}{(2\pi)^2}\int_{|\xi|\leq k_{\rm min}}\widehat{q}(\xi)e^{{\rm i}z\cdot \xi}{\rm d}\xi\Bigg |\cr
&\leq \left(\int_{|\xi|\geq k_{\rm max}}(1+|\xi|^2)^{-2}{\rm d}\xi\right)^{1/2}\|q\|_{H^2(\R^2)}+\left(\int_{|\xi|\leq k_{\rm min}} |e^{{\rm i}z\cdot \xi}|^2{\rm d}\xi\right)^{1/2}\|q\|_{L^2(\R^2)}\cr
&\leq \left(\int_{|\xi|\geq k_{\rm max}}|\xi|^{-4}{\rm d}\xi\right)^{1/2}\|q\|_{H^2(\R^2)}+\left(\int_{|\xi|\leq k_{\rm min}} 1^2{\rm d}\xi\right)^{1/2}\|q\|_{L^2(\R^2)}\cr
&\leq  \sqrt{\pi} k_{\rm max}^{-1}\|q\|_{H^2(\R^2)}+\sqrt{\pi} k_{\rm min}\|q\|_{L^2(\R^2)}, \quad z\in \R^2.
\end{align*}
The proof is complete.
\end{proof}

Combining Theorems \ref{thm:Iinfty=q} and \ref{error analysis}, we obtain 
\begin{equation*}
    |I^\infty_{\mathcal{K}}(z)-q(z)|=\mathcal{O}(k_{\rm max}^{(n-4)/2)})+\mathcal{O}(k_{\rm min}^{n/2}), \quad z\in\R^n.
\end{equation*}
Similarly, we have 
\begin{equation*}
    |I^s_{\mathcal{K}}(z)-q(z)|=\mathcal{O}(k_{\rm max}^{(n-4)/2)})+\mathcal{O}(k_{\rm min}^{n/2})+\mathcal{O}(1/R), \quad z\in\R^n.
\end{equation*}
This indicates that the reconstruction resolution of $q$ can be improved by increasing 
$k_{\rm max}$ and decreasing $k_{\rm min}$. 
Nevertheless, $k_{\rm max}$ cannot be arbitrarily large as the Born approximation breaks down in the high-frequency regime.
Numerically, we consider $N_k$ admissible wavenumbers $\{k_m\}_{m=1}^{N_k}$ that are equidistantly distributed in the interval $[k_{\min}, k_{\max}]$, i.e.,
\begin{equation}\label{eq: discrete_wavenumber}
k_m=k_{\min}+(m-1)\Delta k,\quad m=1,2,\cdots, N_k.
\end{equation}
with step size $\Delta k=(k_{\max}-k_{\min})/(N_k-1)$. The step-by-step workflow of the proposed QSDM scheme is summarized in the following algorithm.
\begin{algorithm}
\caption{QDSM: direct sampling for inverse medium scattering with backscattering data}\label{alg:QDSM}
\begin{algorithmic}[1]
    \STATE Given a set of wavenumbers $\{k_m\}_{m=1}^{N_k}$ as defined in \eqref{eq: discrete_wavenumber}; \\
    \STATE Collect the multifrequency backscattering near- or far-field data due to an underlying contrast $q$;
    \STATE Select a sampling grid $\mathcal{G}$ in the detecting region $G$ such that ${\rm supp}{q}\subset G\subset\partial B_R$;
    \STATE For each sampling point $z\in\mathcal{G}$, evaluate $I^\infty_{\mathcal{K}}(z)$ or $I^s_{\mathcal{K}}(z)$ as the reconstruction of $q(z)$.
\end{algorithmic} 
\end{algorithm}

\subsection{Two-dimensional examples}
We first specify some more details regarding the data generation in 2D. We assume that the $N_\theta$ incident directions $\{\theta_j\}_{j=1}^{N_\theta}$ are uniformly distributed on $[0, 2\pi)$, that is, $\theta_j=(\cos t_j,\sin t_j)$ with incident angles $t_j= (j-1)\Delta\theta, j=1,2,\cdots,N_\theta$, where $\Delta\theta=2\pi/N_\theta$.  Under the above settings, the forward solver produces the noisy data sets
\begin{align*}
    & \text{Far-field data:}\quad \mathbf{M}_f=\{u_b^{\infty, \delta}(-\theta_j, \theta_j, k_m): j=1,2,\cdots,N_\theta,\, m=1,2,\cdots, N_k\},\\
    & \text{Near-field data:}\quad \mathbf{M}_s=\{u_b^{s, \delta}(-x_j, x_j, k_m): x_j=R\theta_j, j=1,2,\cdots,N_\theta,\, m=1,2,\cdots, N_k\}.
\end{align*}

Using these complex-valued $\mathbb{C}^{N_\theta\times N_k}$ measurement matrices $\mathbf{M}_f$ or $\mathbf{M}_s$ as input and noting that $\gamma_2(k)=\mathrm{e}^{\mathrm{i}\pi/4}/\sqrt{8 k\pi}$, the discrete forms of the indicator functions \eqref{I-infty} and \eqref{I-s} are correspondingly written as
$$
{\mathbf I}^{\infty}(z) = \frac{2(1-\mathrm{i})\Delta \theta\Delta k}{\pi^{3/2}}\sum_{m=1}^{N_k} k_m^{-1/2}\sum_{j=1}^{N_\theta} u_b^{\infty, \delta}(-\theta_j, \theta_j, k_m)e^{-2{\rm i}k_m \theta_j\cdot z},
$$
and
$$
{\mathbf I}^s(z) =-\frac{8\mathrm{i}R\Delta\theta\Delta k}{\pi}\sum\limits_{m=1}^{N_k}\sum\limits_{j=1}^{N_\theta}u_b^{s,\delta}(-x_j, x_j, k_m)e^{2{\rm i}k_m (\theta_j\cdot z-R)}.
$$
Note that, for convenience and without ambiguity, we drop the subscript $\mathcal{K}$ in the indicator functions. For the near-field case, we choose $R=5$ in the following examples unless otherwise stated. In all 2D examples, the sampling indicators are calculated on a uniformly distributed grid of size $201\times 201$ over the sampling region $[-0.7, 0.7]$. 

We would like to emphasize that the formulations of the imaging indicators do not rely on any simulation process of the forward problem in \cite{buergel2019ipscatt}, hence the {\it inverse crime} is inherently avoided by the proposed QDSM. 


\begin{example}\label{exple: 2D-1}
In the first example, we consider a predefined contrast function ``cornerBallSparse2D'' in \textbf{IPscatt} with a minor modification of rescaling the original contract value $q$ to $10^{-2}q$. This benchmark contrast is composed of a non-constant $L$-shaped corner, a ball (filled circle) and a broken corner consisting of squares and a thin line. We refer to Figure \ref{fig: 2D-example1-exact} for a 2D view of the exact contrast function. The parameters $N_k=60, k_{\rm min}=1, k_{\rm max}=121$ are used in this example. 
\begin{figure}
    \centering
    \includegraphics[width=0.3\linewidth]{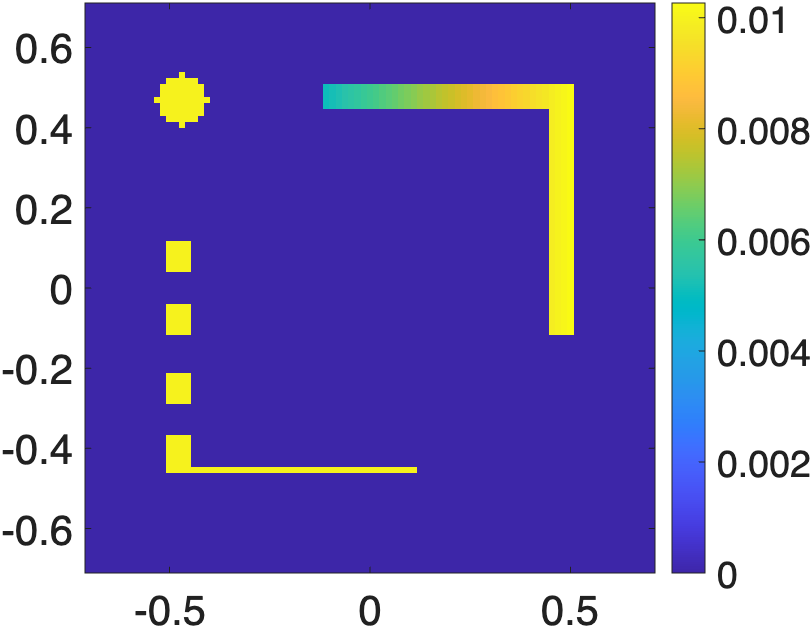}
    \caption{The ground-truth contrast of Example \ref{exple: 2D-1}.}
    \label{fig: 2D-example1-exact}
\end{figure}

\begin{figure}
    \centering
    \subfigure[$N_\theta=8$]{\includegraphics[width=0.3\linewidth]{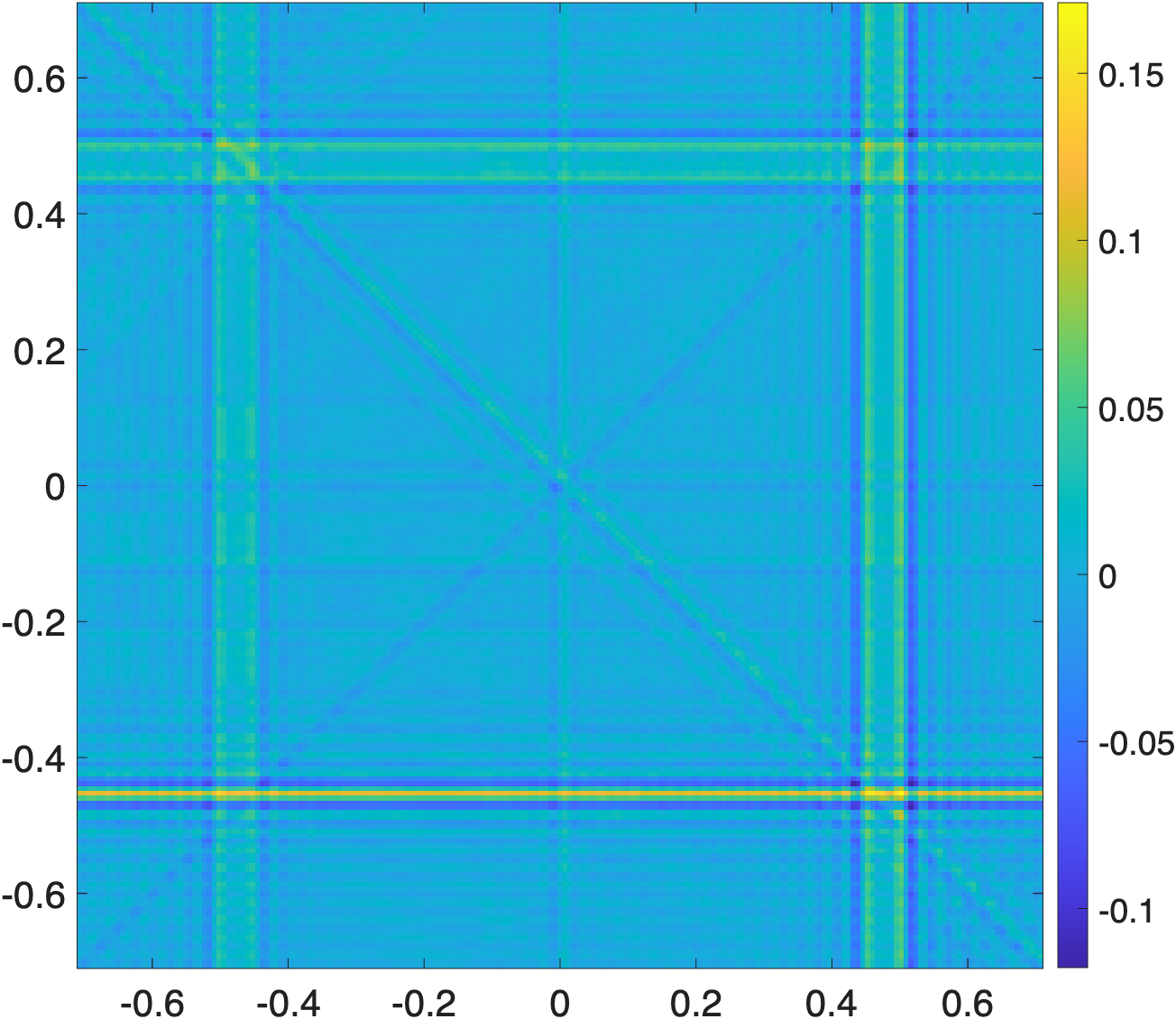}}\quad
    \subfigure[$N_\theta=16$]{\includegraphics[width=0.3\linewidth]{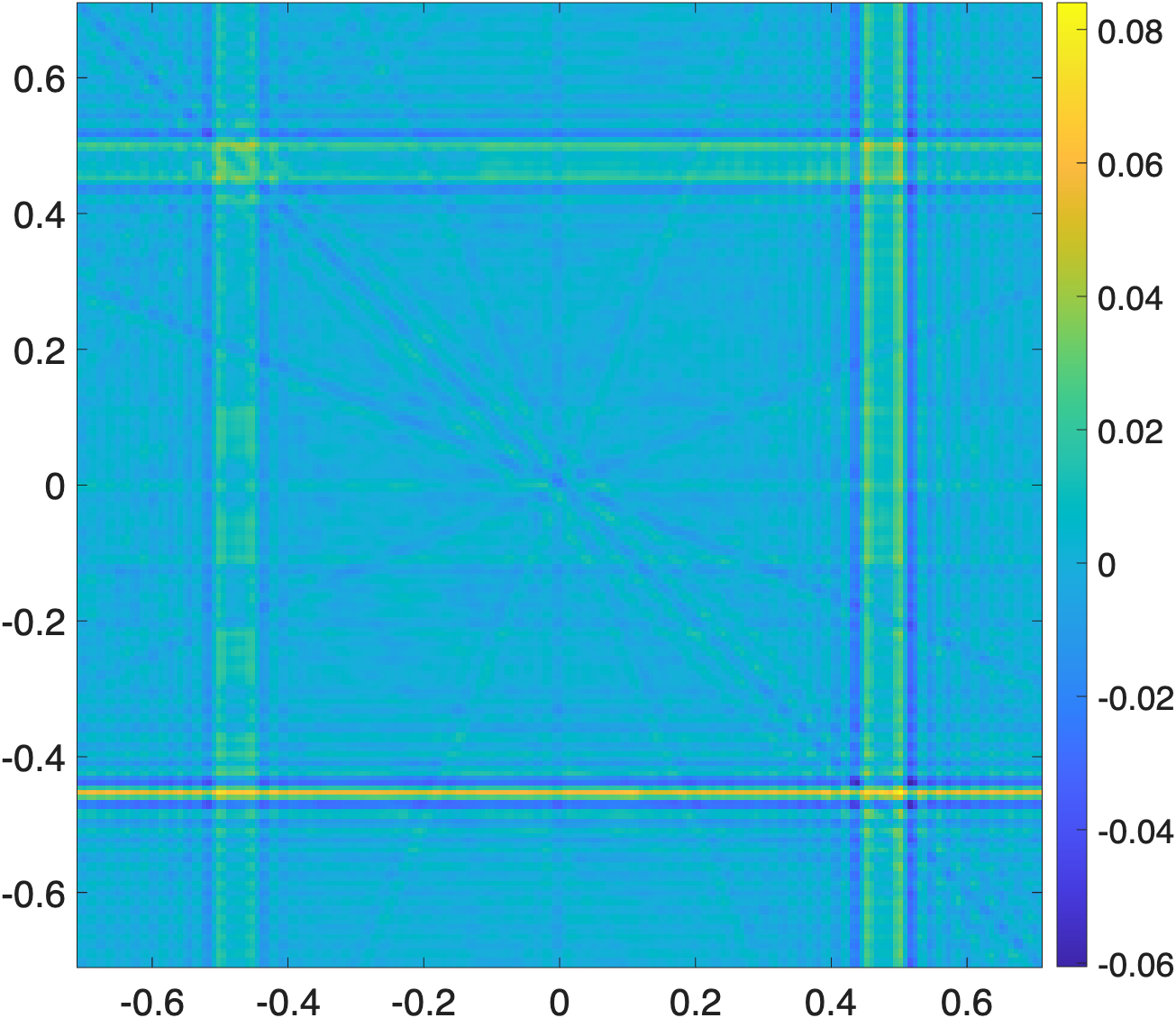}}\quad
    \subfigure[$N_\theta=32$]{\includegraphics[width=0.3\linewidth]{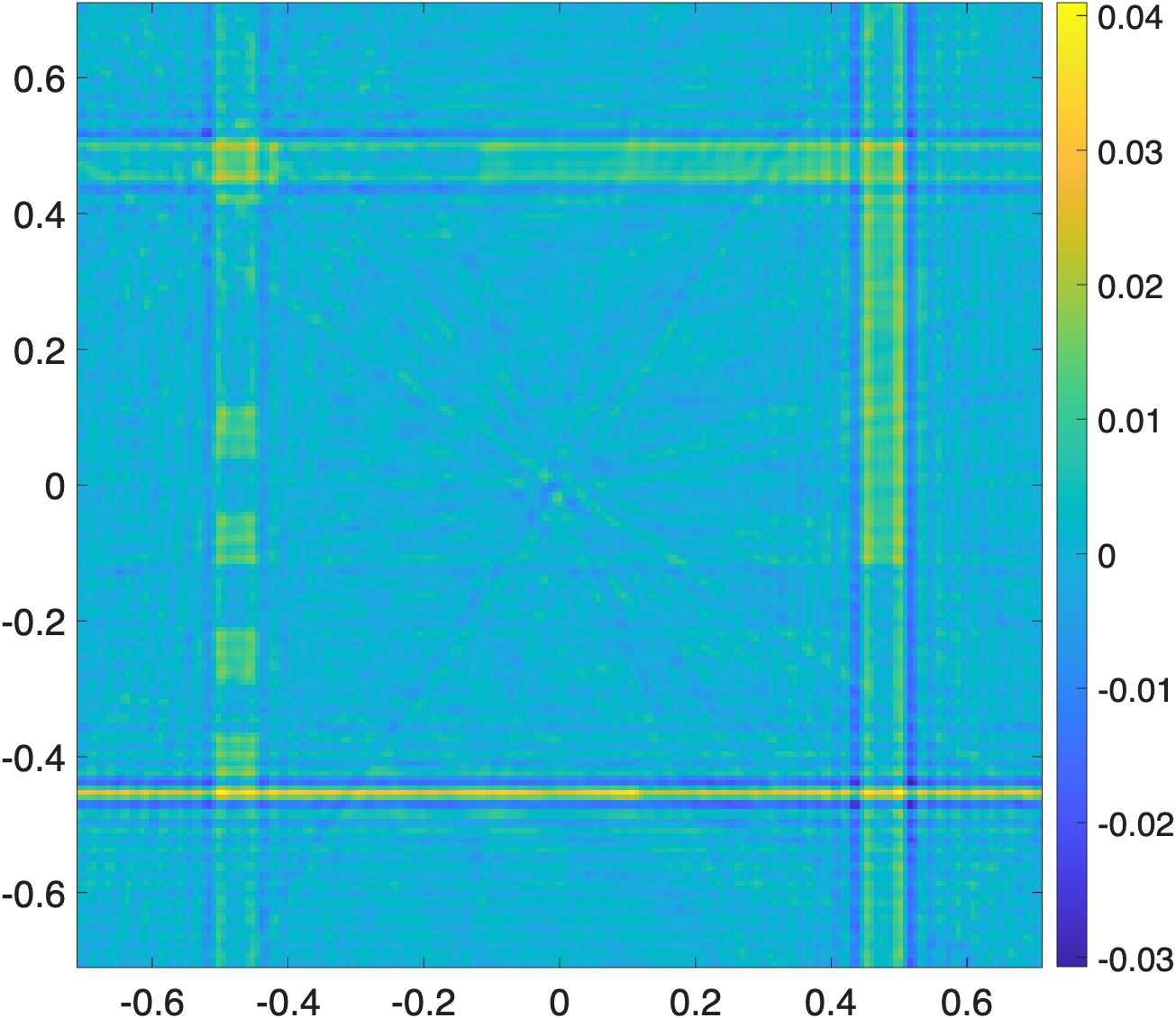}}
    \subfigure[$N_\theta=64$]{\includegraphics[width=0.3\linewidth]{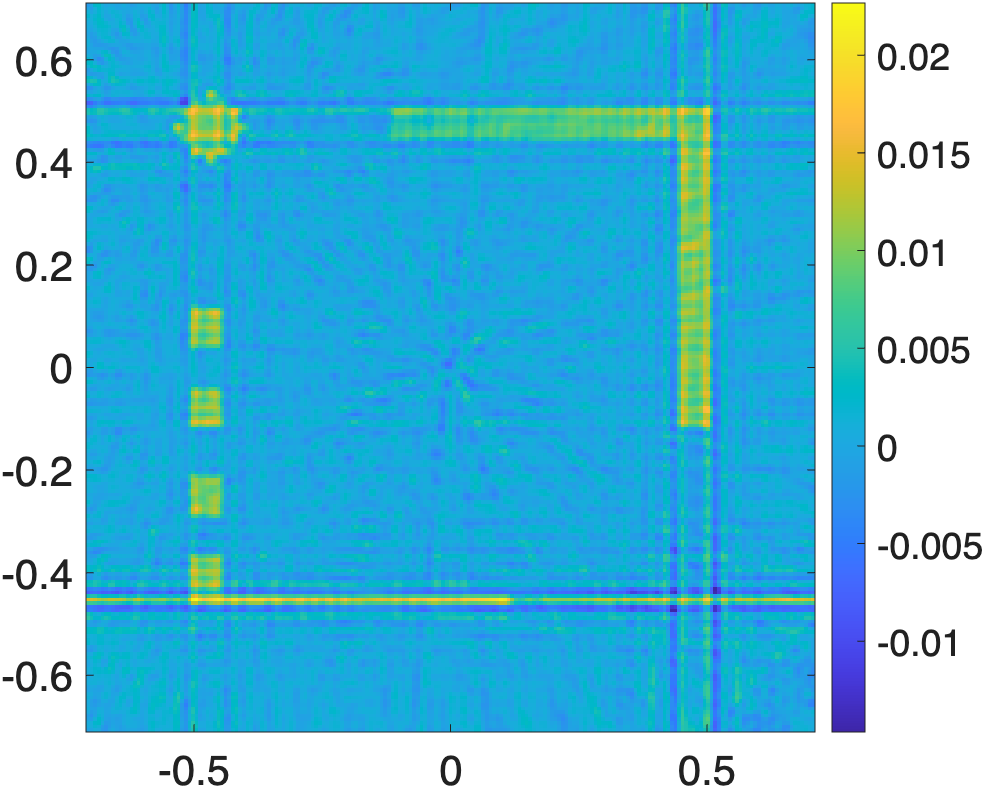}}\quad
    \subfigure[$N_\theta=128$]{\includegraphics[width=0.3\linewidth]{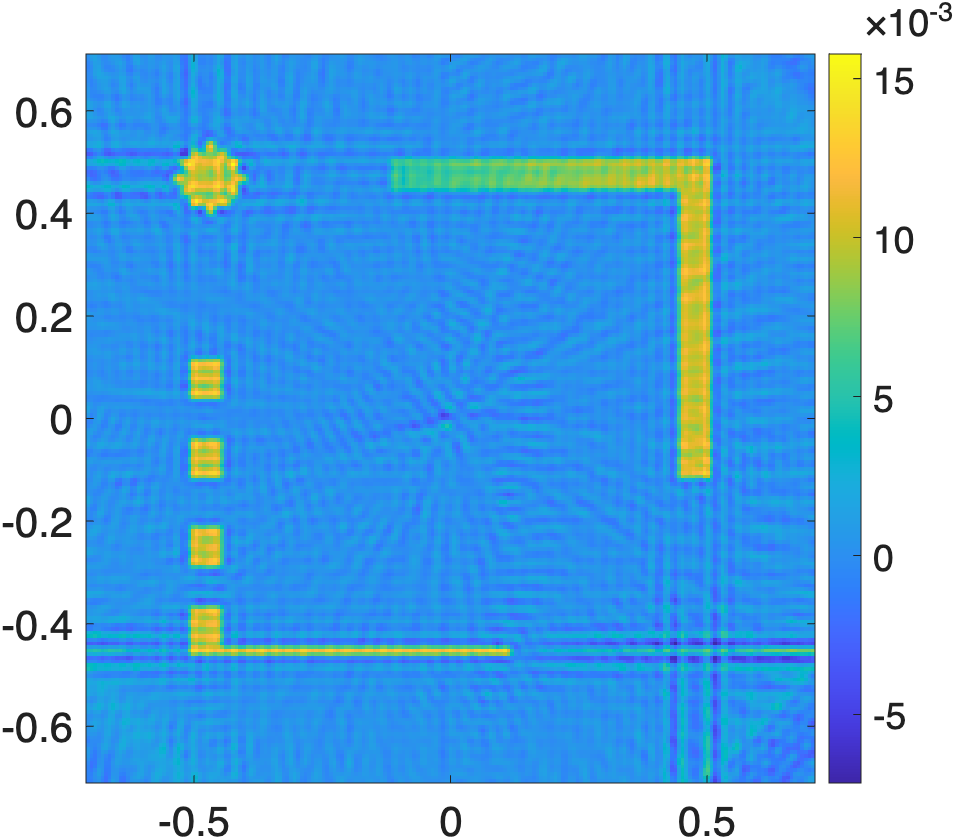}}\quad
    \subfigure[$N_\theta=256$]{\includegraphics[width=0.3\linewidth]{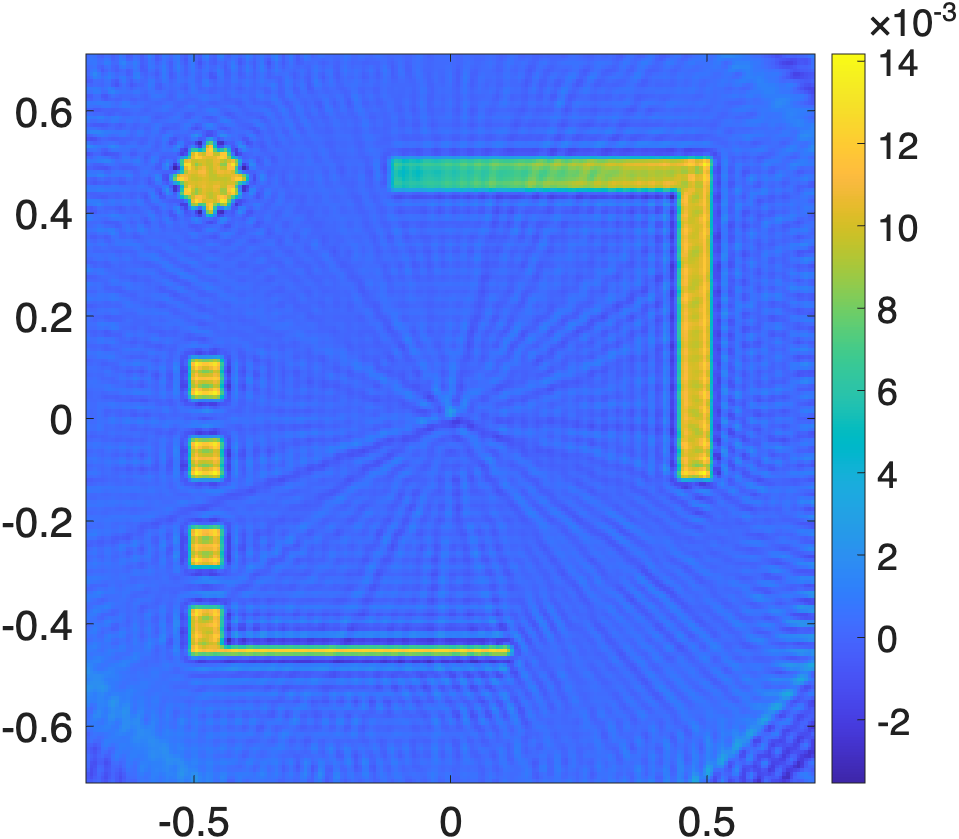}}
    \caption{Imaging of $\Re({\mathbf I}^{\infty})$ with different number of incident directions $N_\theta$.}
    \label{fig: 2D-example1-farField}
\end{figure}

The simulation results for the variance of $N_\theta$ are shown in Figure \ref{fig: 2D-example1-farField}. From Figure \ref{fig: 2D-example1-farField} we find that the number of incident directions significantly account for the accuracy of imaging output ${\mathbf I}^{\infty}$. Analogously, such behaviors can also be observed for the near-field indicator ${\mathbf I}^s$. Therefore, to obtain a satisfactory reconstruction, one should choose a sufficiently large $N_\theta$ to ensure the desired approximation of discrete and continuous indicators. 

\begin{figure}
    \centering
    \subfigure[$R=5$ ]{\includegraphics[width=0.3\linewidth]{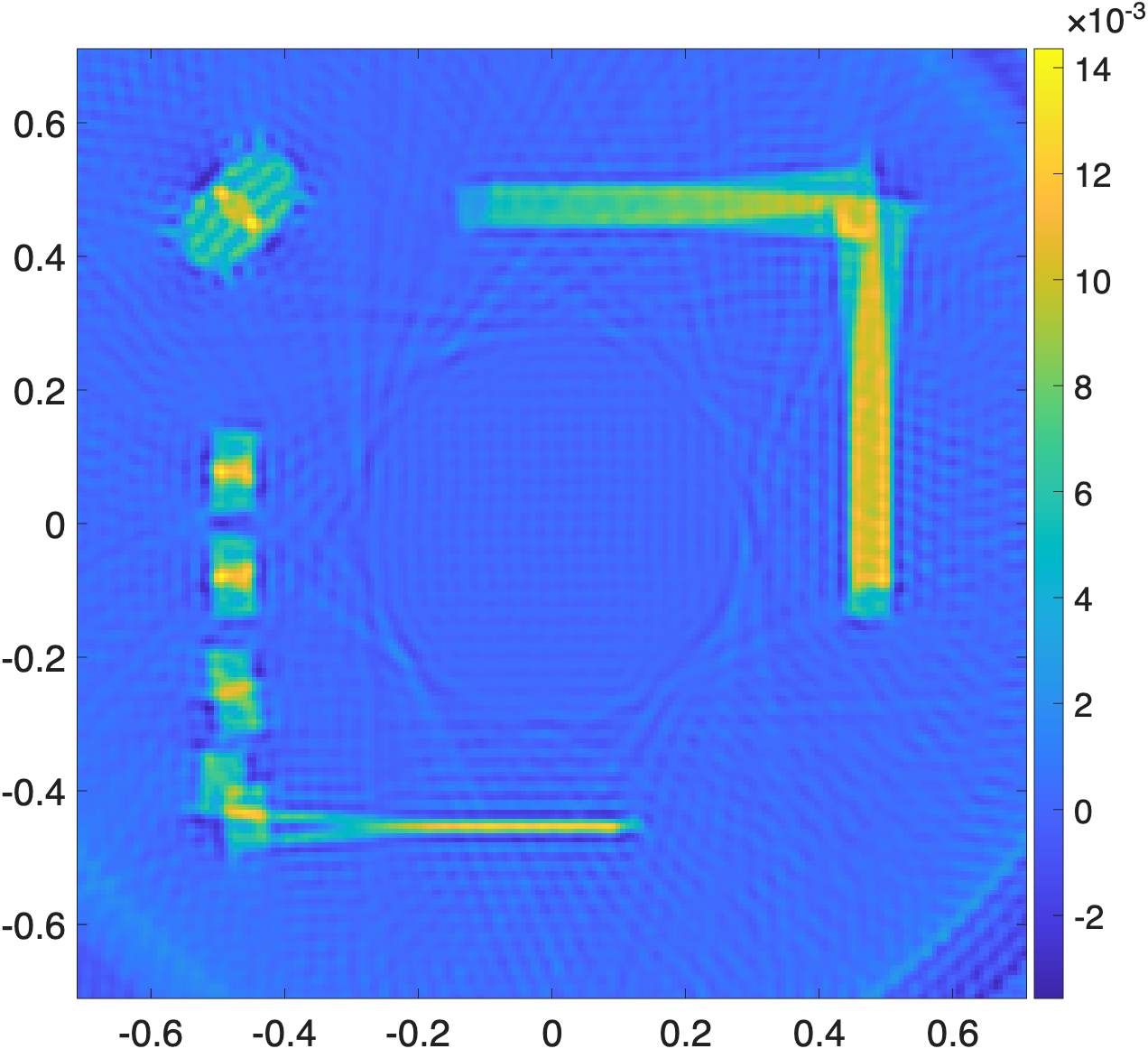}}\quad
    \subfigure[$R=10$]{\includegraphics[width=0.3\linewidth]{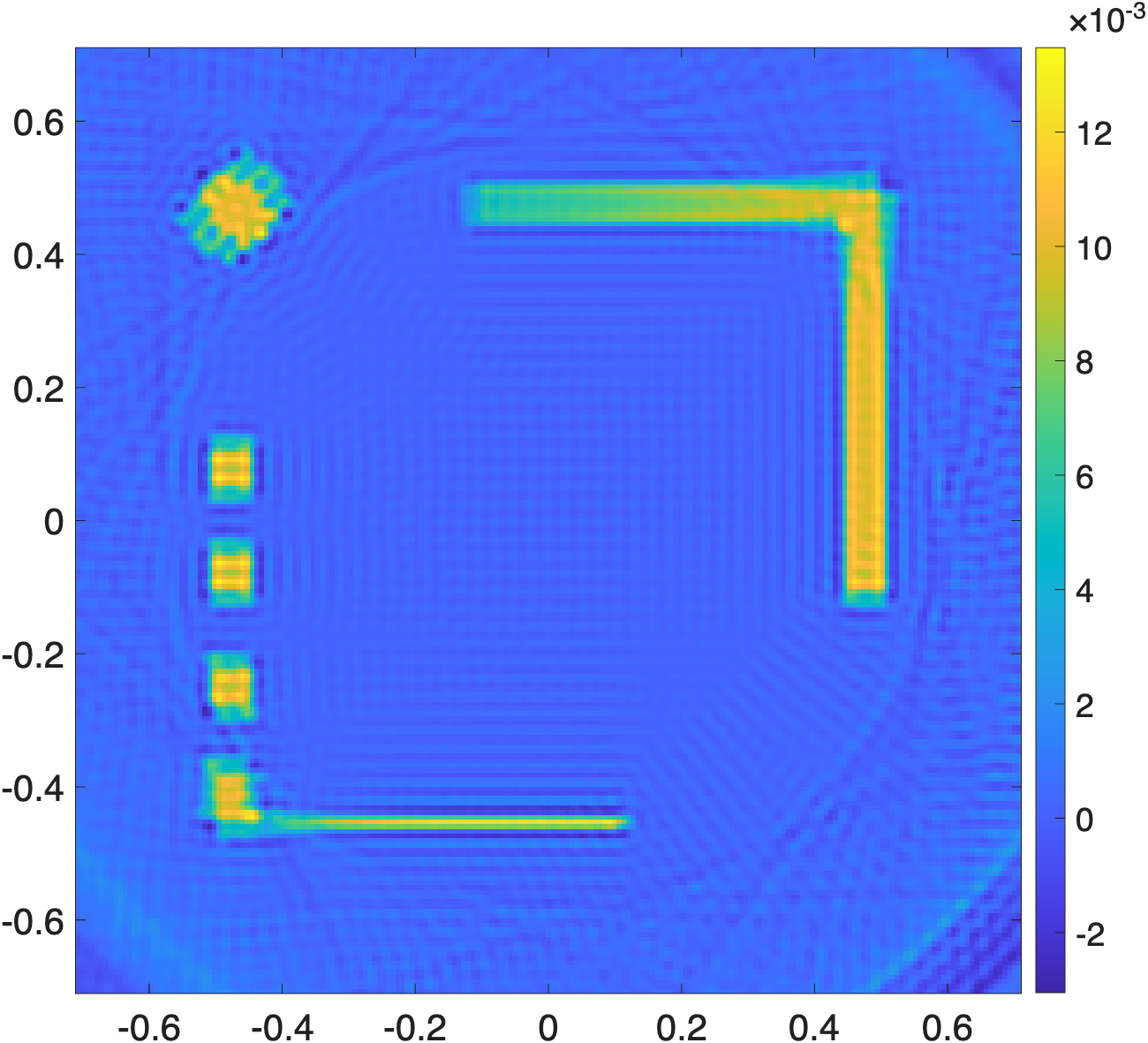}}\quad
    \subfigure[$R=20$]{\includegraphics[width=0.3\linewidth]{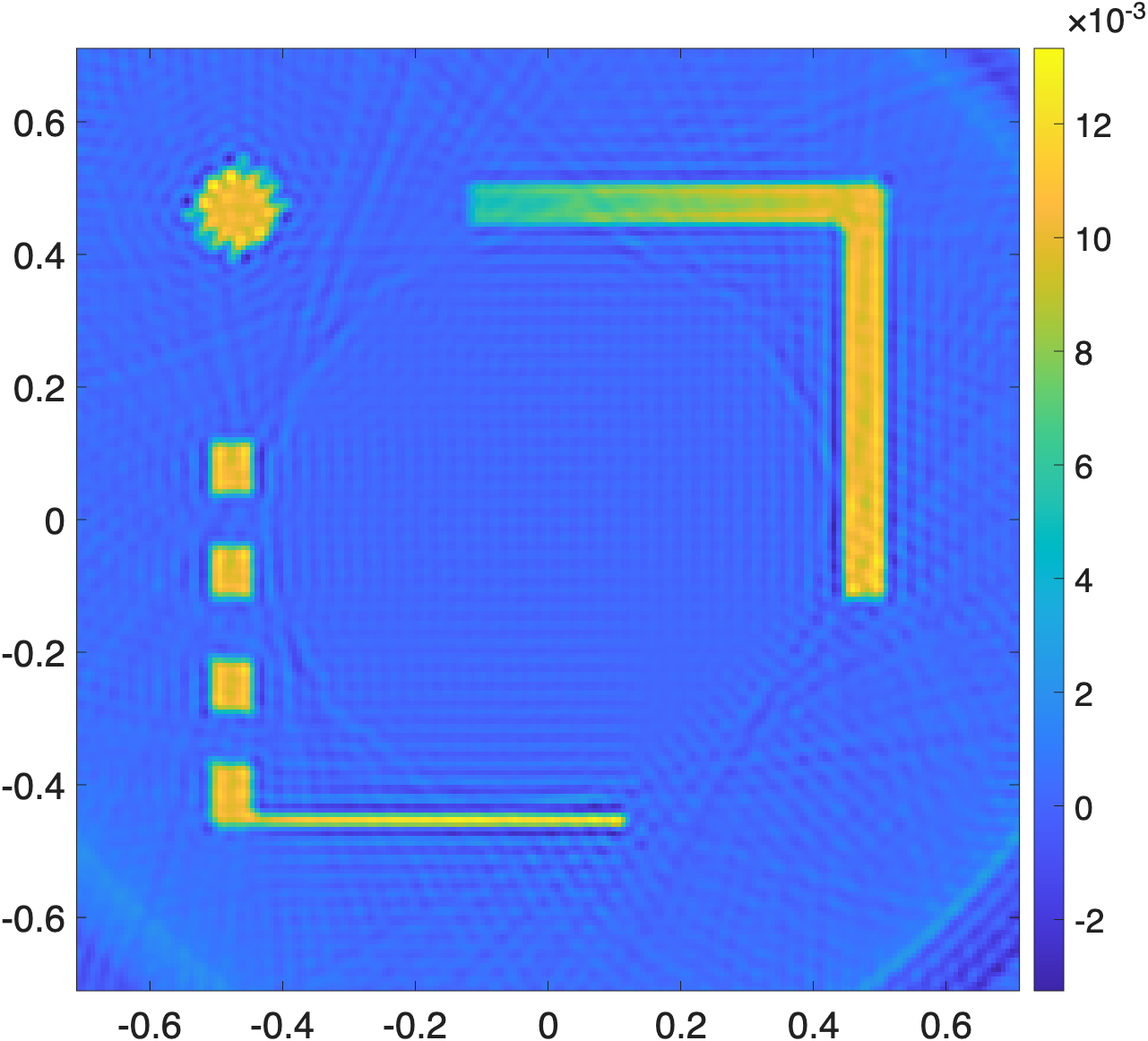}}\quad
    \caption{Imaging of $\Re({\mathbf I}^s)$ with $N_\theta=256$ and different radii $R$.}
    \label{fig: 2D-example1-nearField}
\end{figure}

In Figure \ref{fig: 2D-example1-nearField} we test the sampling results of $\mathrm{Re}({\mathbf I}^s)$ with respect to the radius $R$ of the measurement circle. Figure \ref{fig: 2D-example1-nearField} shows that the sampling quality improves with respect to the increase of $R$. These results are well consistent with the theoretical finding in Theorem \ref{thm: near-field approximation}, which requires a moderately large $R$ to suppress the discrepancy between $I^s$ and the exact contrast $q$.
\end{example}

\begin{example}\label{exple: 2D-2}
    The second example is devoted to reconstructing the contrast of a Shepp-Logan phantom, see Figure \ref{fig: 2D-example2-exact} for an image of the ground-truth contrast function. In this experiment, we fix $N_\theta=256$ and test the sampling results with respect to some key parameters.
\begin{figure}
    \centering
    \includegraphics[width=0.3\linewidth]{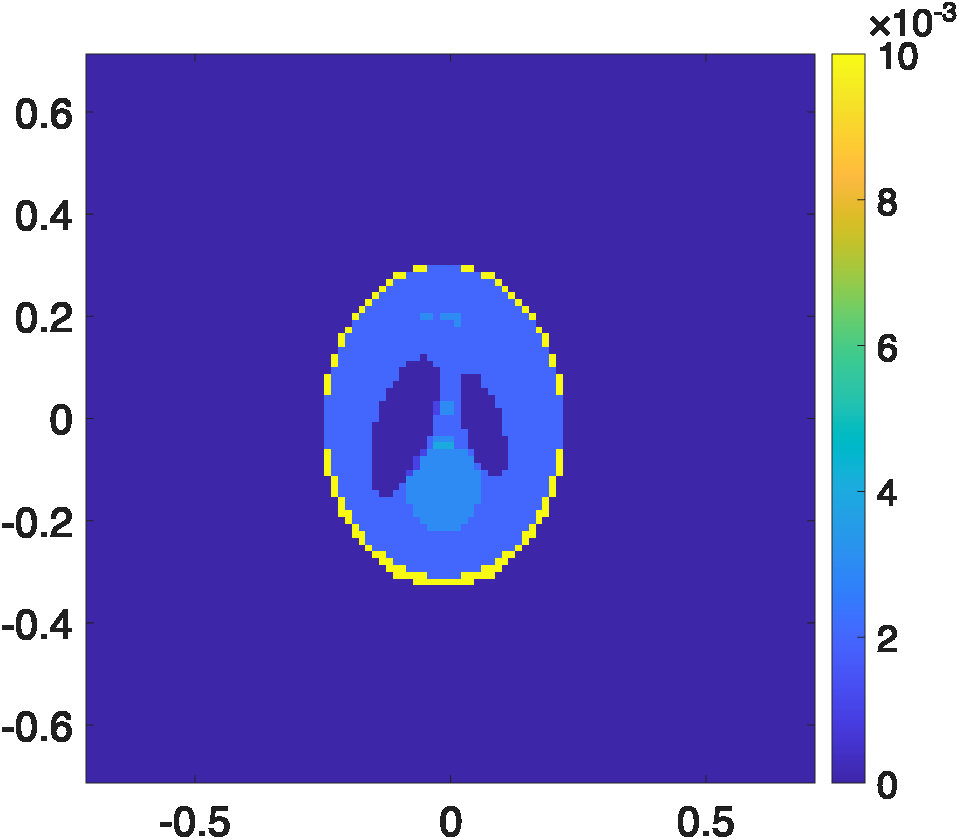}
    \caption{The ground-truth contrast of Example \ref{exple: 2D-2}.}
    \label{fig: 2D-example2-exact}
\end{figure}

\begin{figure}
    \centering
    \subfigure[$\Delta k=2, k_{\rm min}=1, k_{\rm max}=121$]{\includegraphics[width=0.3\linewidth]{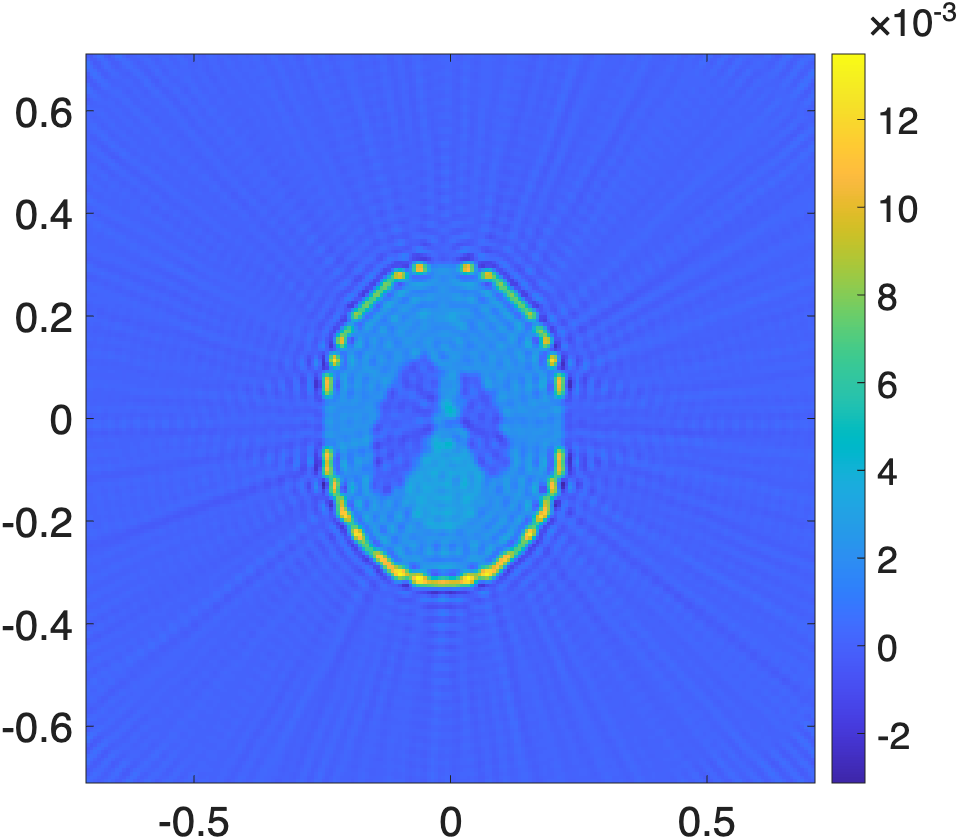}}\quad
    \subfigure[$\Delta k=2, k_{\rm min}=1, k_{\rm max}=61$]{\includegraphics[width=0.3\linewidth]{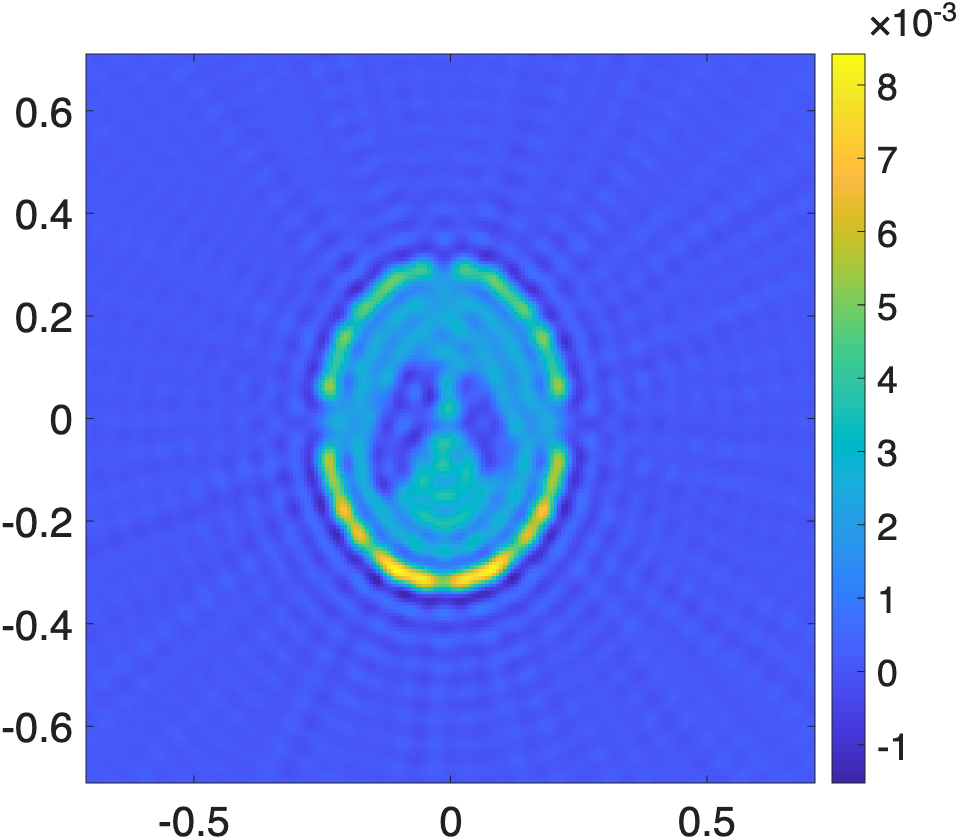}}\quad
    \subfigure[$\Delta k=2, k_{\rm min}=61, k_{\rm max}=121$]{\includegraphics[width=0.3\linewidth]{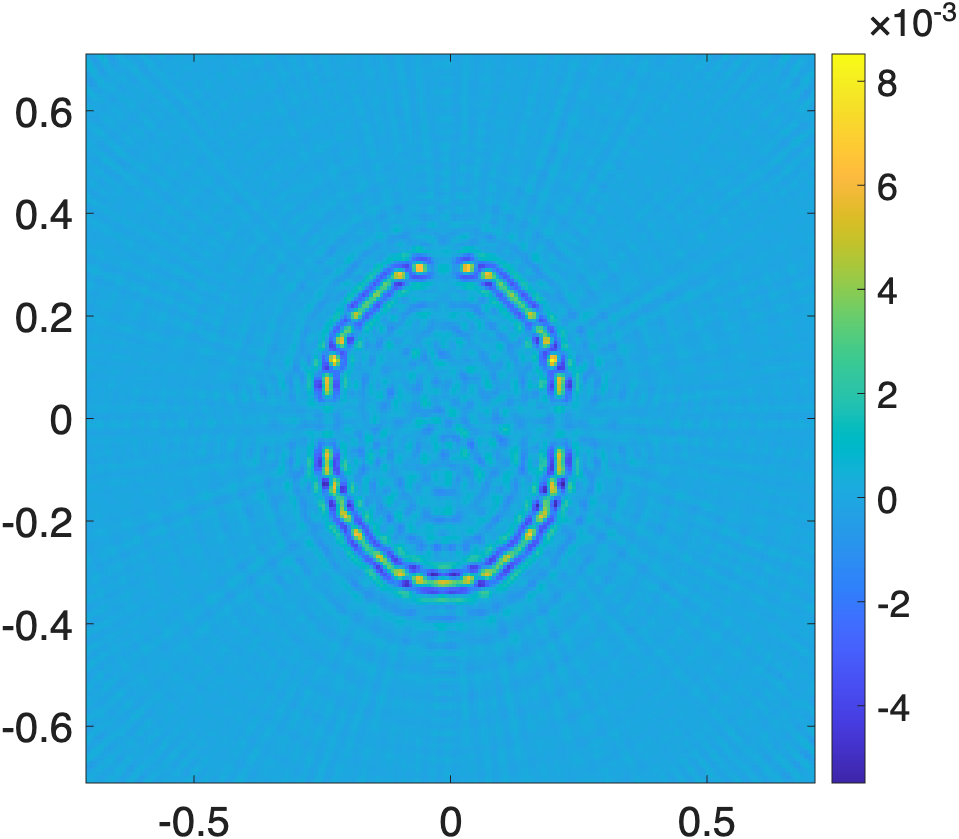}}\\
    \subfigure[$\Delta k=0.5, k_{\rm min}=1, k_{\rm max}=121$]{\includegraphics[width=0.3\linewidth]{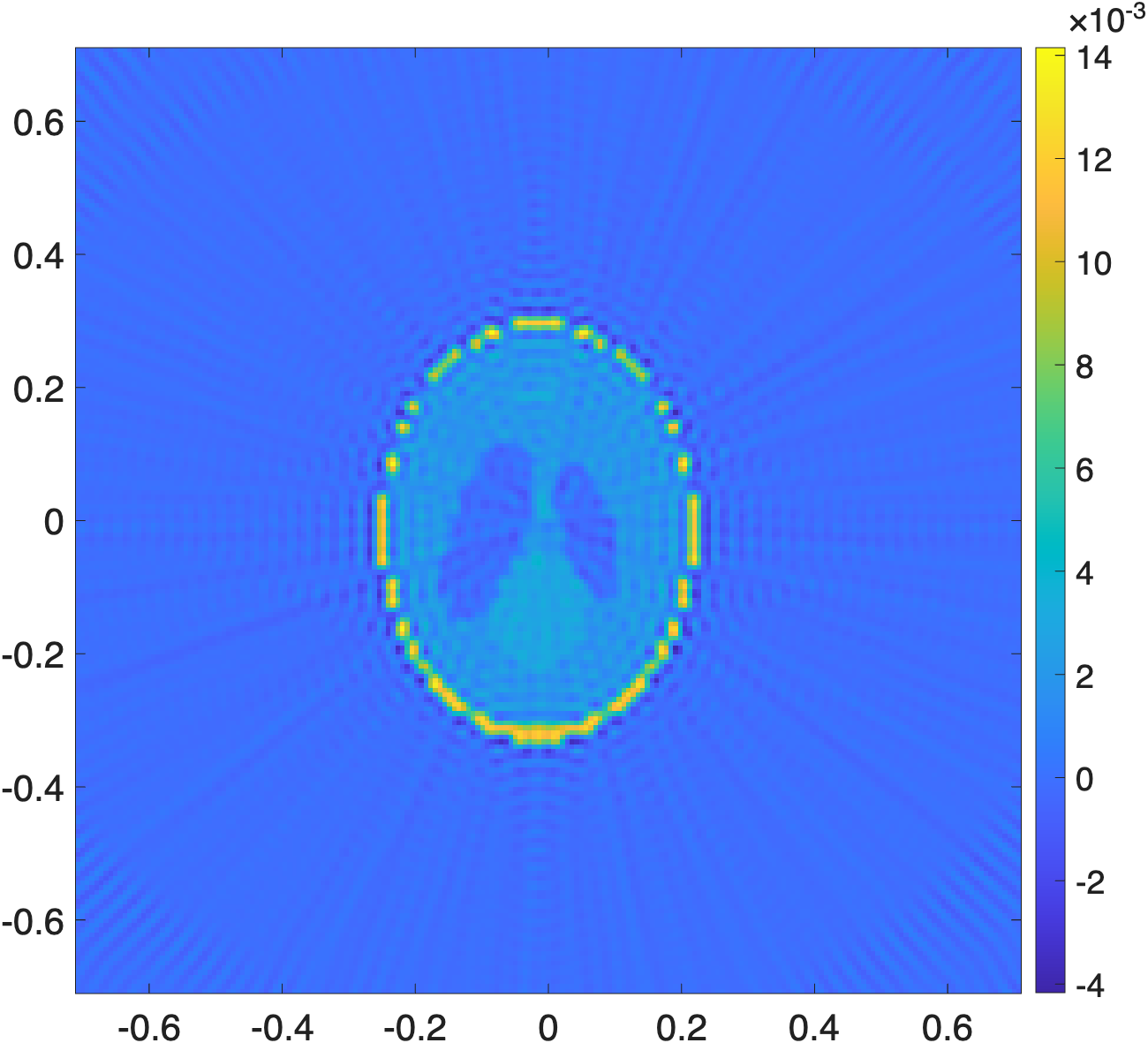}}\quad
    \subfigure[$\Delta k=0.5, k_{\rm min}=1, k_{\rm max}=61$]{\includegraphics[width=0.3\linewidth]{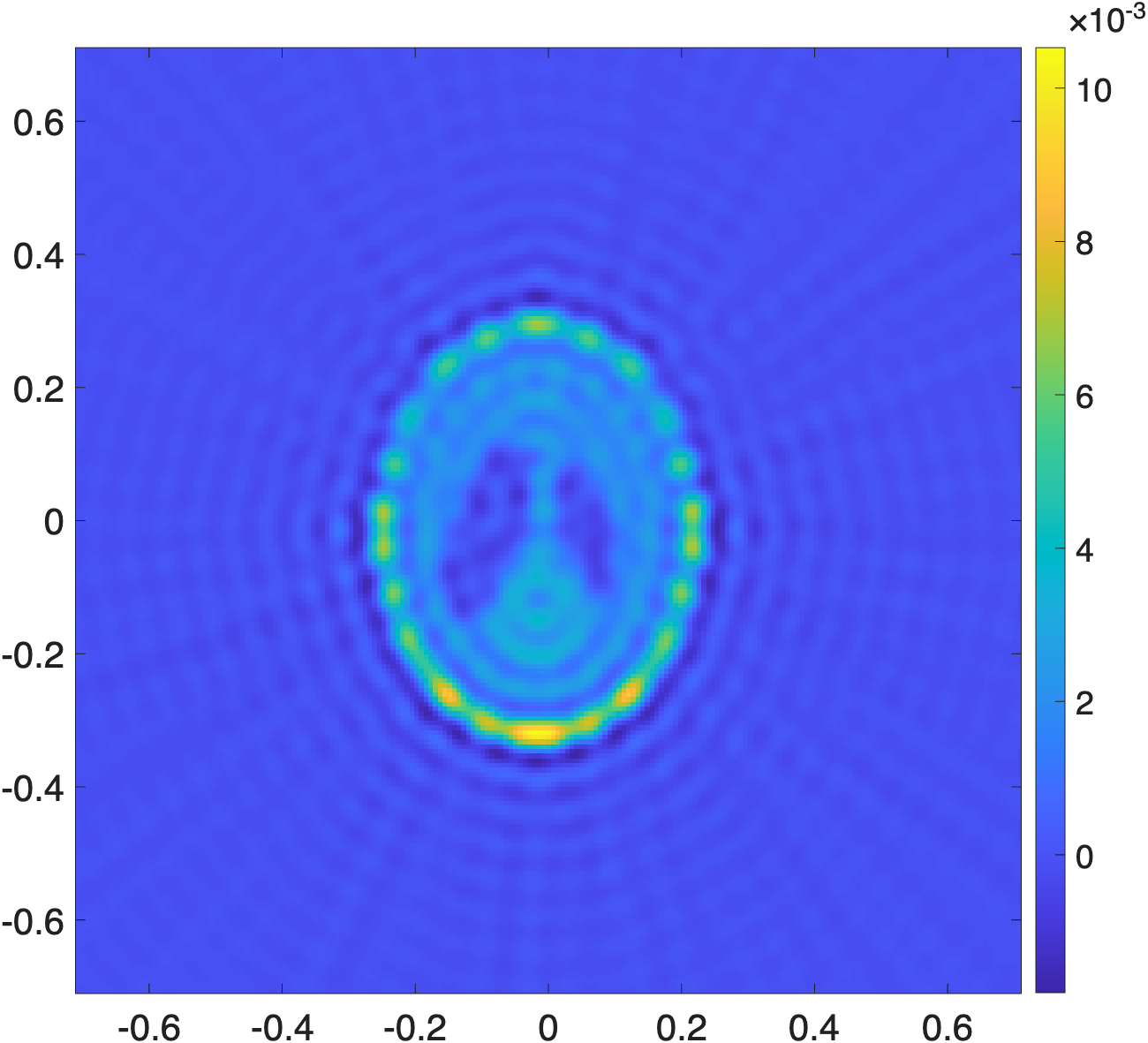}}\quad
    \subfigure[$\Delta k=0.5, k_{\rm min}=61, k_{\rm max}=121$]{\includegraphics[width=0.3\linewidth]{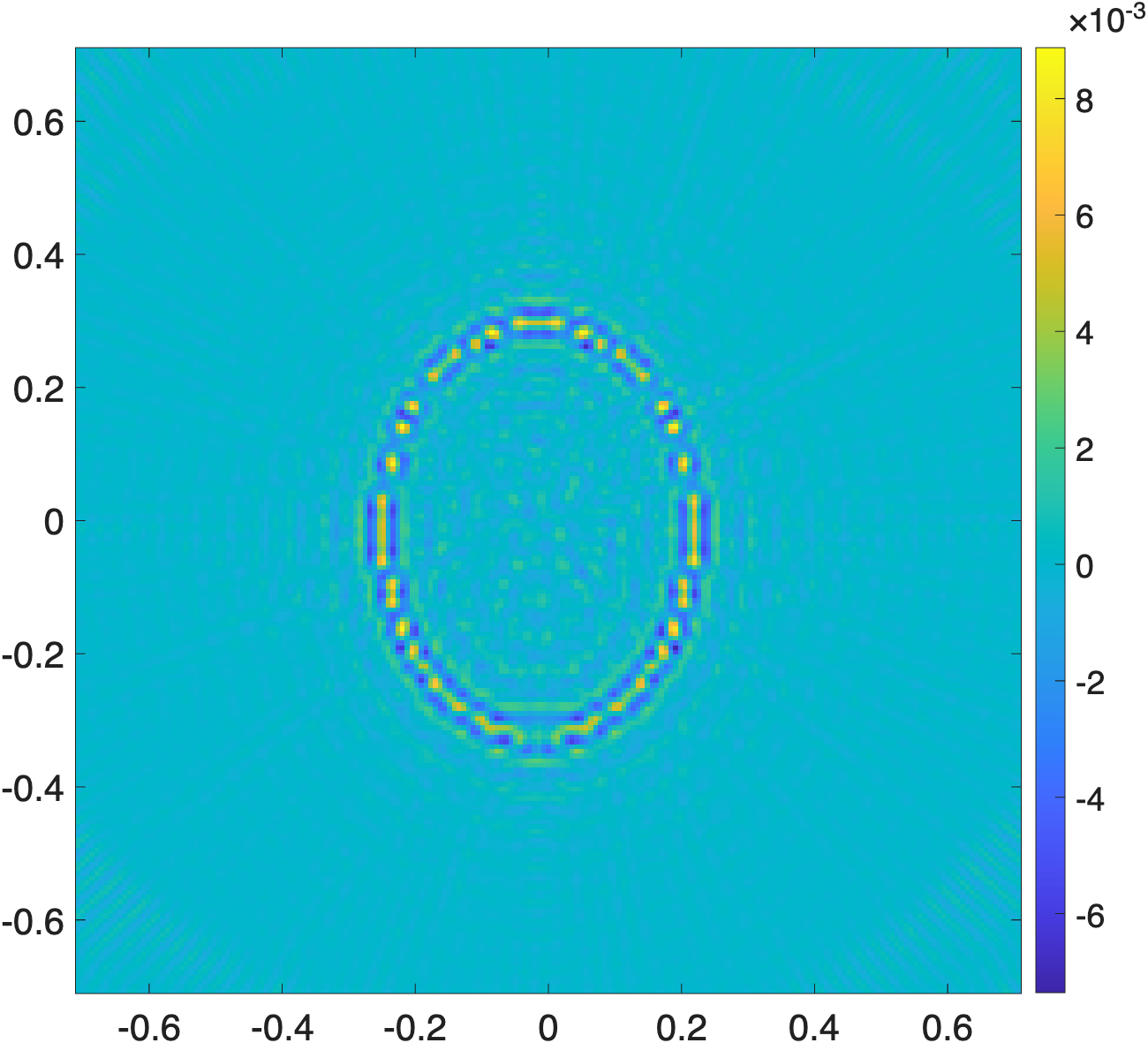}}
    \caption{Imaging of $\Re({\mathbf I}^{\infty}(z))$ with different choices of wavenumbers.}
    \label{fig: 2D-example2-farField}
\end{figure}

Figure \ref{fig: 2D-example2-farField} demonstrate how the range of frequency spectrum $(k_{\rm min}, k_{\rm max})$ and the sampling density of the wavenumbers $\Delta k$ would influence the quality of reconstructions. Specifically, if we choose a relatively large wavenumber spectrum (from $k_{\rm min}=1$ to $k_{\rm max}=121$ in Figure \ref{fig: 2D-example2-farField}(a)(d)), then a satisfactory reconstruction can be achieved. Moreover, the result with more wavenumbers (Figure \ref{fig: 2D-example2-farField}(d)) outperforms the one with fewer wavenumbers (Figure \ref{fig: 2D-example2-farField}(a)). When only low-frequency information is used ($k_{\rm max}=61$ in Figure \ref{fig: 2D-example2-farField}(b)(e)), the fine details in the reconstructions are unavailable. In contrast, if we only resort to high-frequency component of the data (from $k_{\rm min}=61$ to $k_{\rm max}=121$ in Figure \ref{fig: 2D-example2-farField}(c)(f)), the error due to the Born approximation dominates and the imaging results also significantly deteriorates. The similar results can be found in the near-field case, see Figure \ref{fig: 2D-example2-nearField}. Therefore, a sufficiently large number of discrete wavenumbers and an appropriate range of the frequency spectrum play an important role in the reconstructions.

\begin{figure}
    \centering
    \subfigure[$\Delta k=2, k_{\rm min}=1, k_{\rm max}=121$]{\includegraphics[width=0.3\linewidth]{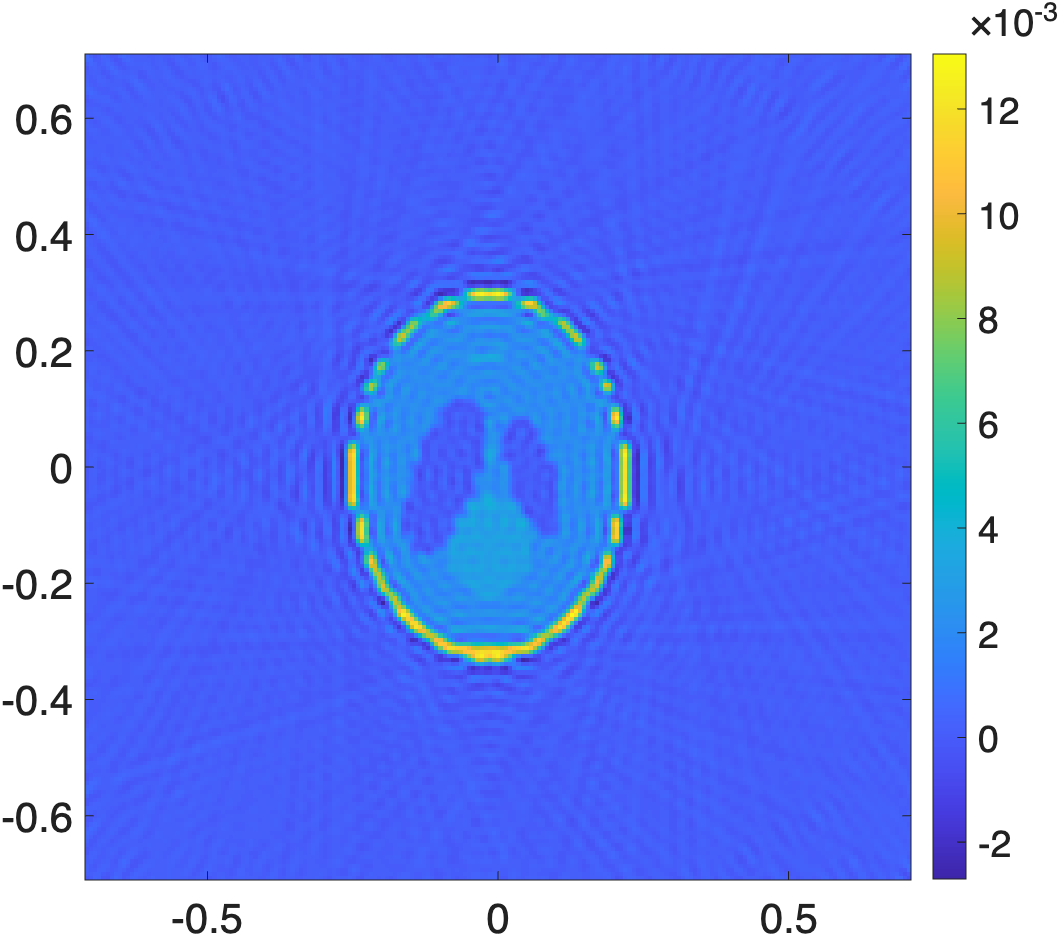}}\quad
    \subfigure[$\Delta k=2, k_{\rm min}=1, k_{\rm max}=61$]{\includegraphics[width=0.3\linewidth]{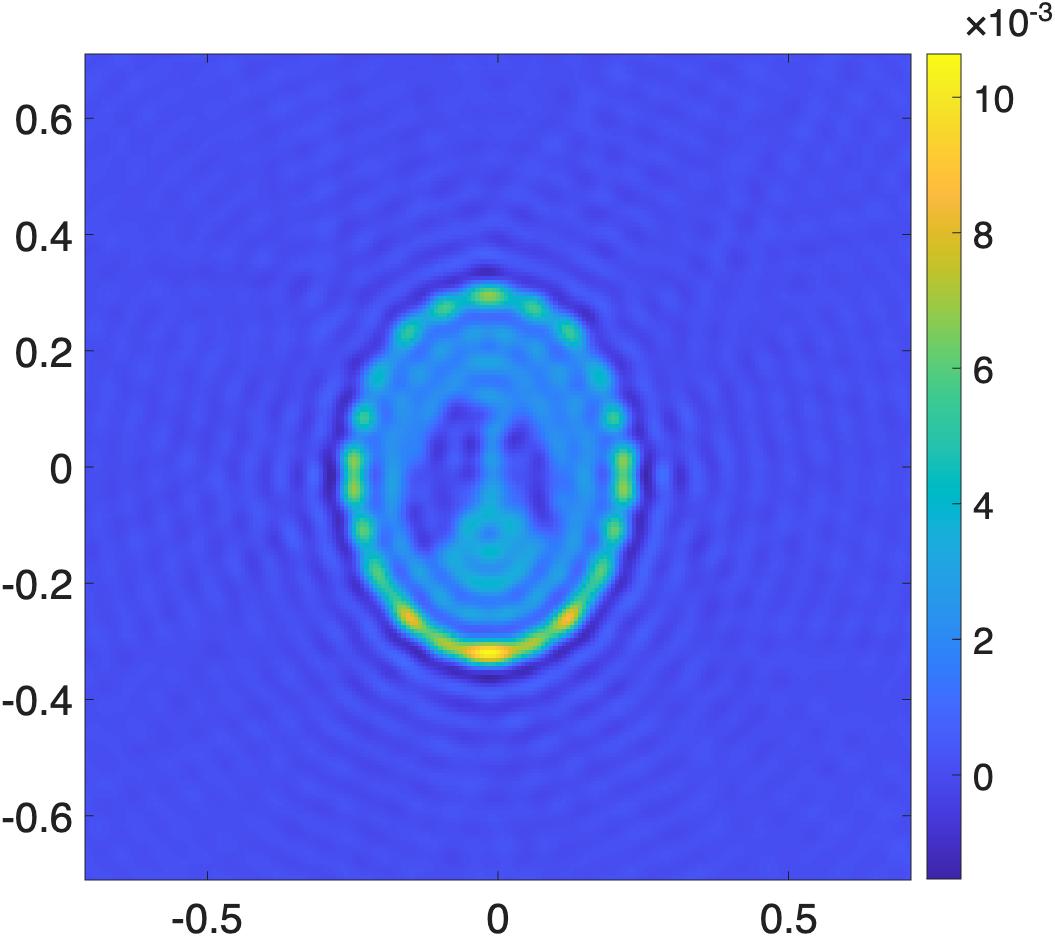}}\quad
    \subfigure[$\Delta k=2, k_{\rm min}=61, k_{\rm max}=121$]{\includegraphics[width=0.3\linewidth]{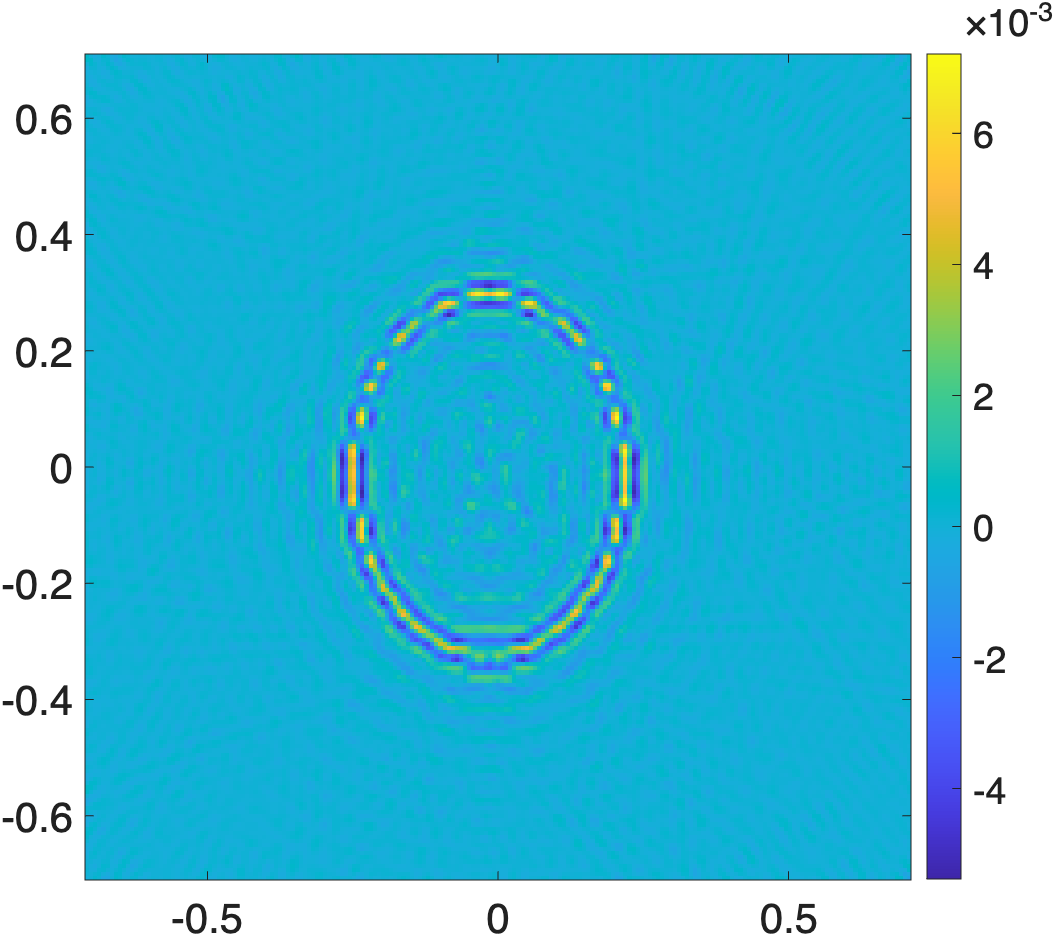}}\\
    \subfigure[$\Delta k=0.5, k_{\rm min}=1, k_{\rm max}=121$]{\includegraphics[width=0.3\linewidth]{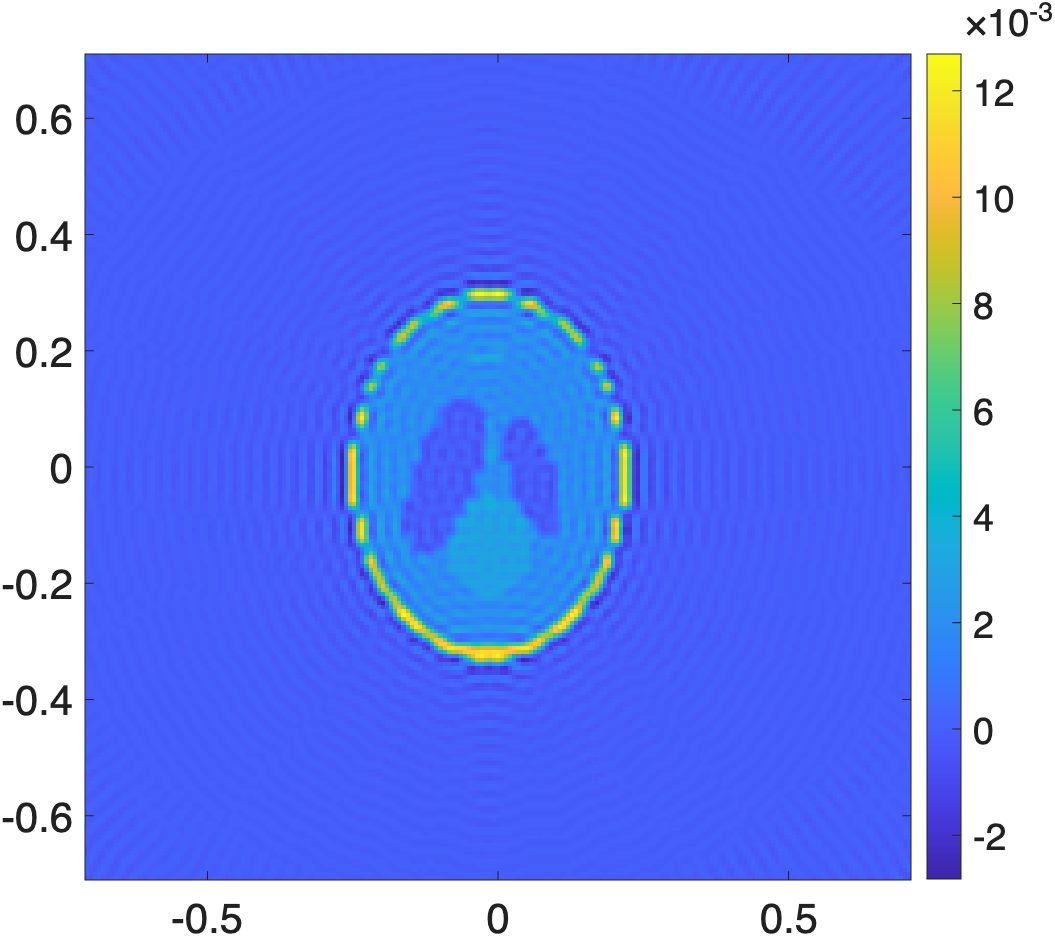}}\quad
    \subfigure[$\Delta k=0.5, k_{\rm min}=1, k_{\rm max}=61$]{\includegraphics[width=0.3\linewidth]{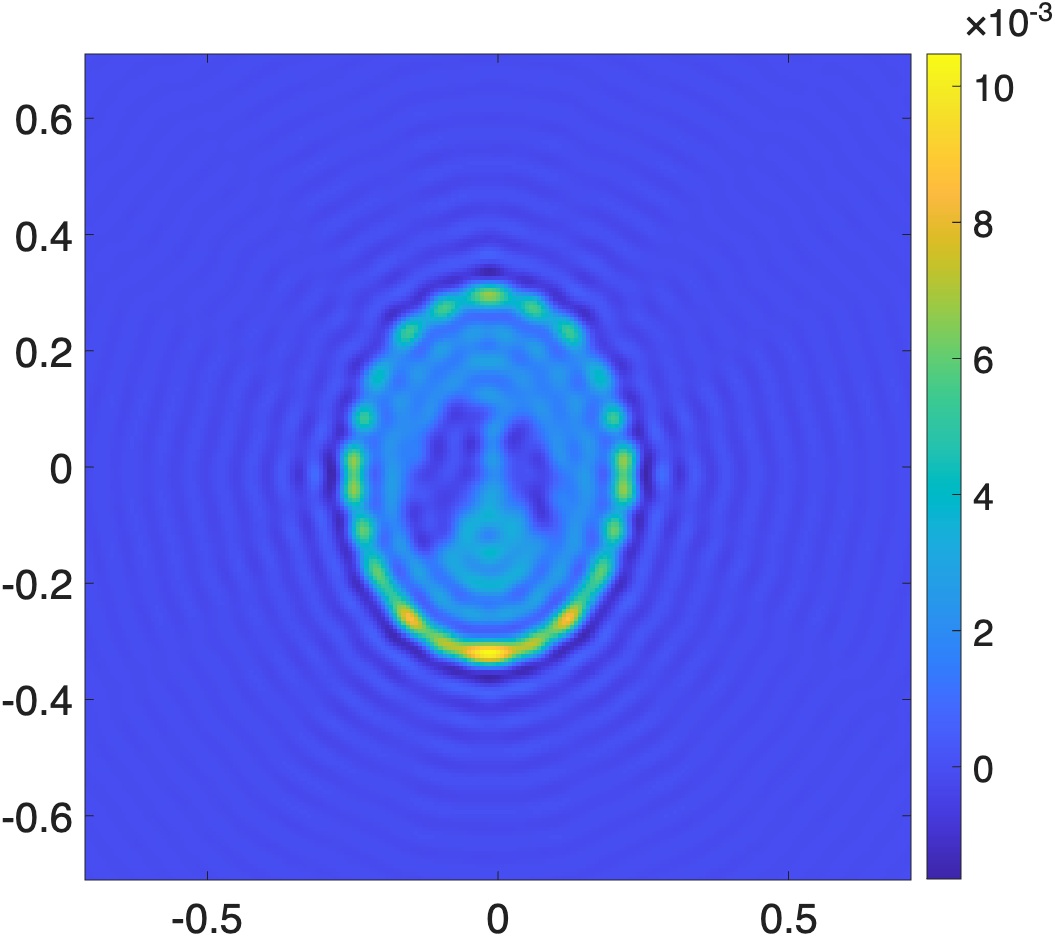}}\quad
    \subfigure[$\Delta k=0.5, k_{\rm min}=61, k_{\rm max}=121$]{\includegraphics[width=0.3\linewidth]{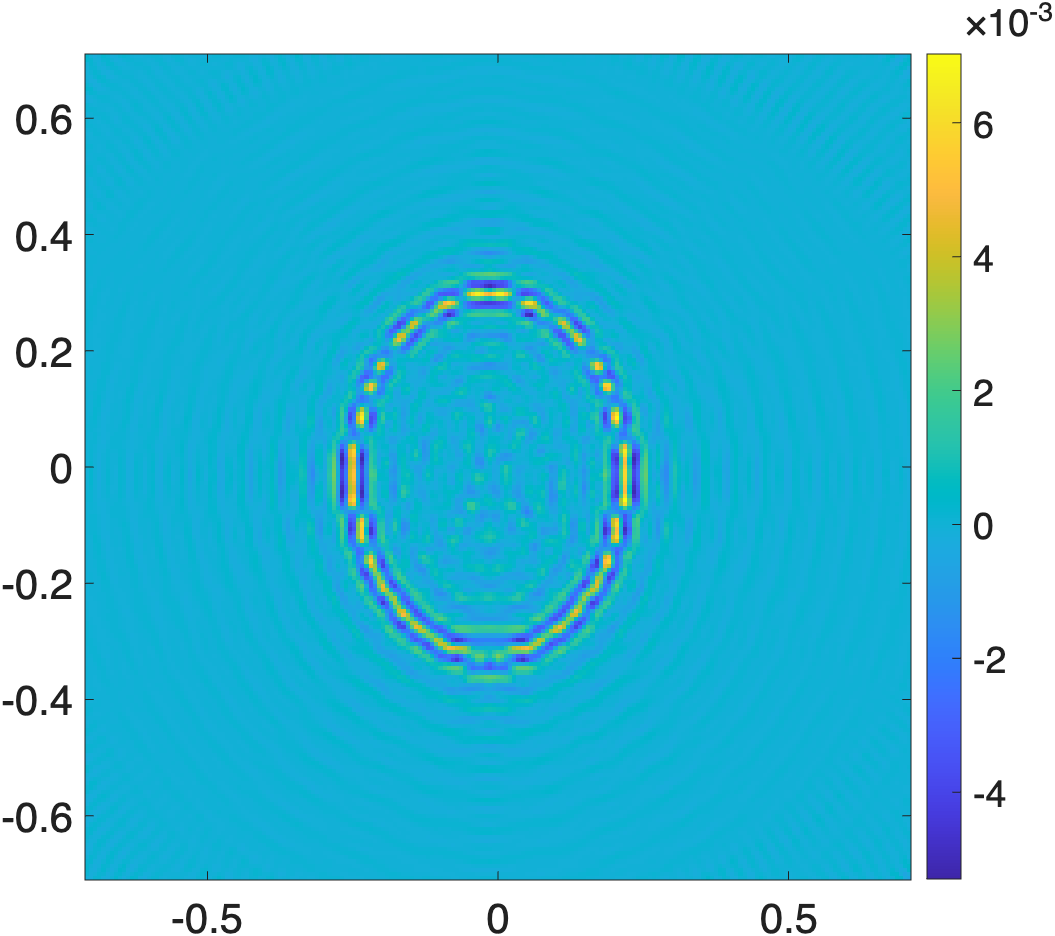}}
    \caption{Imaging of $\Re({\mathbf I}^s(z))$ with $R=5$ and different choices of wavenumbers.}
    \label{fig: 2D-example2-nearField}
\end{figure}


\end{example}

\begin{example}\label{exple: 2D-3}
The third 2D example is designed to demonstrate the algorithm's performance in recovering a complex-valued contrast function. In the previous examples, we assume it is known a priori that the ground-truth contrast is real-valued. If this a priori information $\Im(q)=0$ is available, only the real part of the indicators are required to be plotted and the imaginary part is ignored. Now we shall consider a more complicated case that the contrast function is a smooth complex function with a possibly non-positive real part. Here the ground-truth contrast is given by 
\begin{equation}\label{eq: complex_q}
q(x)=q(x_1, x_2)=\Re(q(x_1, x_2))+\mathrm{i}\Im(q(x_1, x_2)),\quad x=(x_1, x_2)\in\mathbb{R}^2,
\end{equation}
where
\begin{align*}
\Re(q(x_1, x_2))&=1.1\times 10^{-2}e^{-200[(x_1-0.01)^2+(x_2-0.12)^2]}-(x_2^2-x_1^2)e^{-90(x_1^2+x_2^2)},\\
\Im(q(x_1, x_2))&=10^{-2}\Big\{0.9e^{-100[(x_1-0.2)^2+(x_2-0.2)^2]}+1.1e^{-250[(x_1+0.15)^2+(x_2-0.15)^2]}\\
&\quad+1.3e^{-150[(x_1+0.2)^2+2(x_2+0.2)^2]}+e^{-50[(x_1-0.25)^2+x_2^2]}\Big\}.
\end{align*}

We refer to Figure \ref{fig: 2D-example3-exact} for a depiction of the contrast function $q$ defined in \eqref{eq: complex_q}. The parameters $N_\theta=256, \Delta k=2, k_{\rm min}=1, k_{\rm max}=99$ are used in this experiment.

\begin{figure}
    \centering
    \subfigure[$\Re(q)$]{\includegraphics[width=0.3\linewidth]{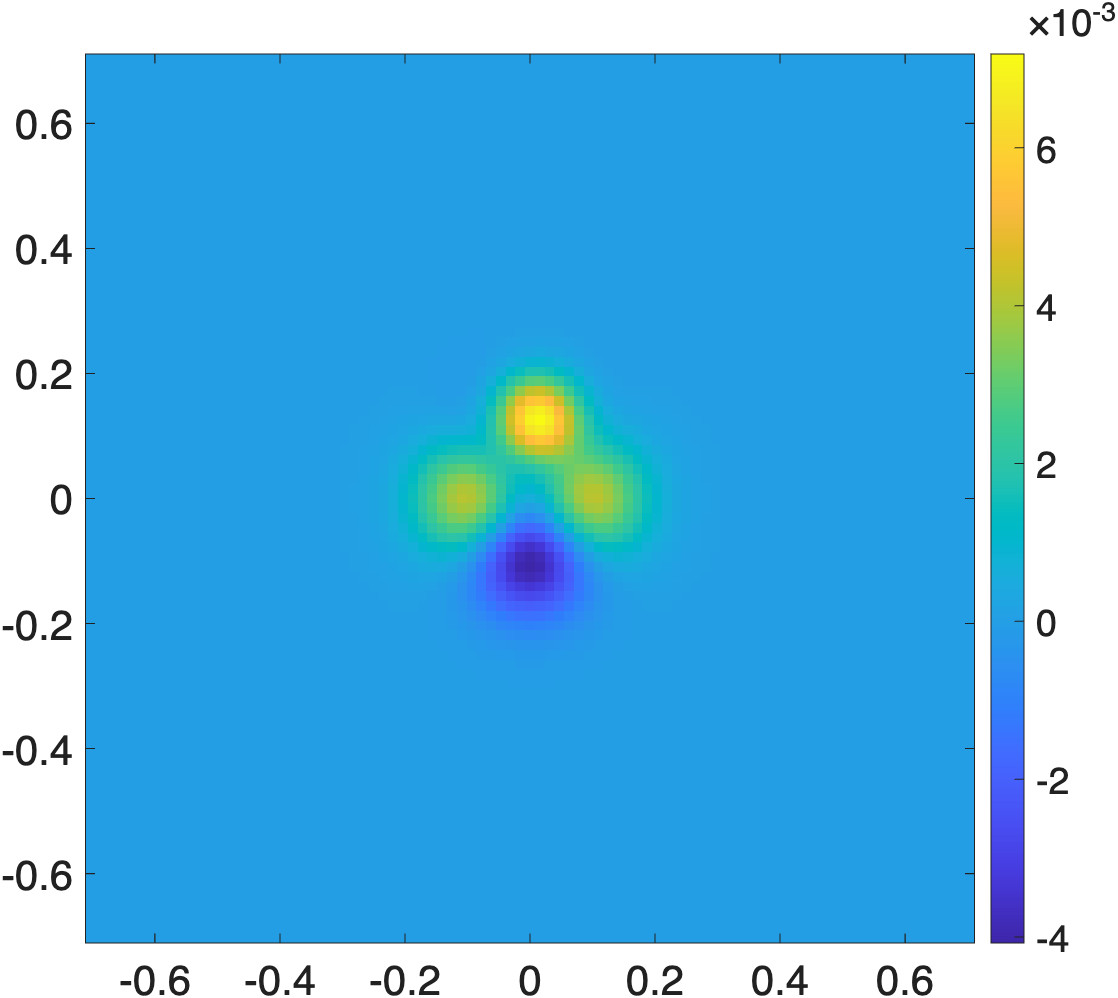}}\quad
     \subfigure[$\Im(q)$]{\includegraphics[width=0.3\linewidth]{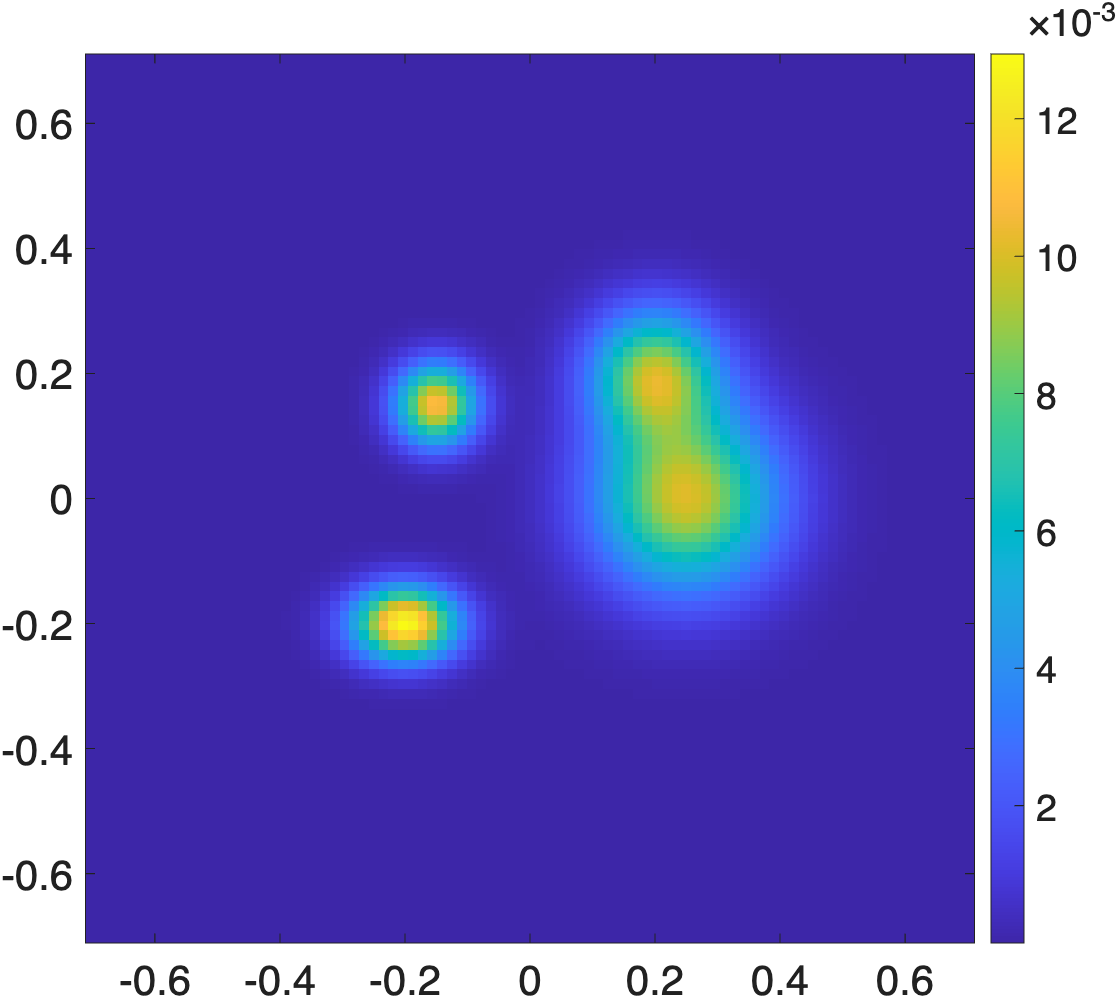}}
     \caption{The ground-truth contrast $q$ of Example \ref{exple: 2D-3}.}
    \label{fig: 2D-example3-exact}
\end{figure}

\begin{figure}
    \centering
    \subfigure[$\Re({\mathbf I}^\infty),\ \delta=1\%$]{\includegraphics[width=0.3\linewidth]{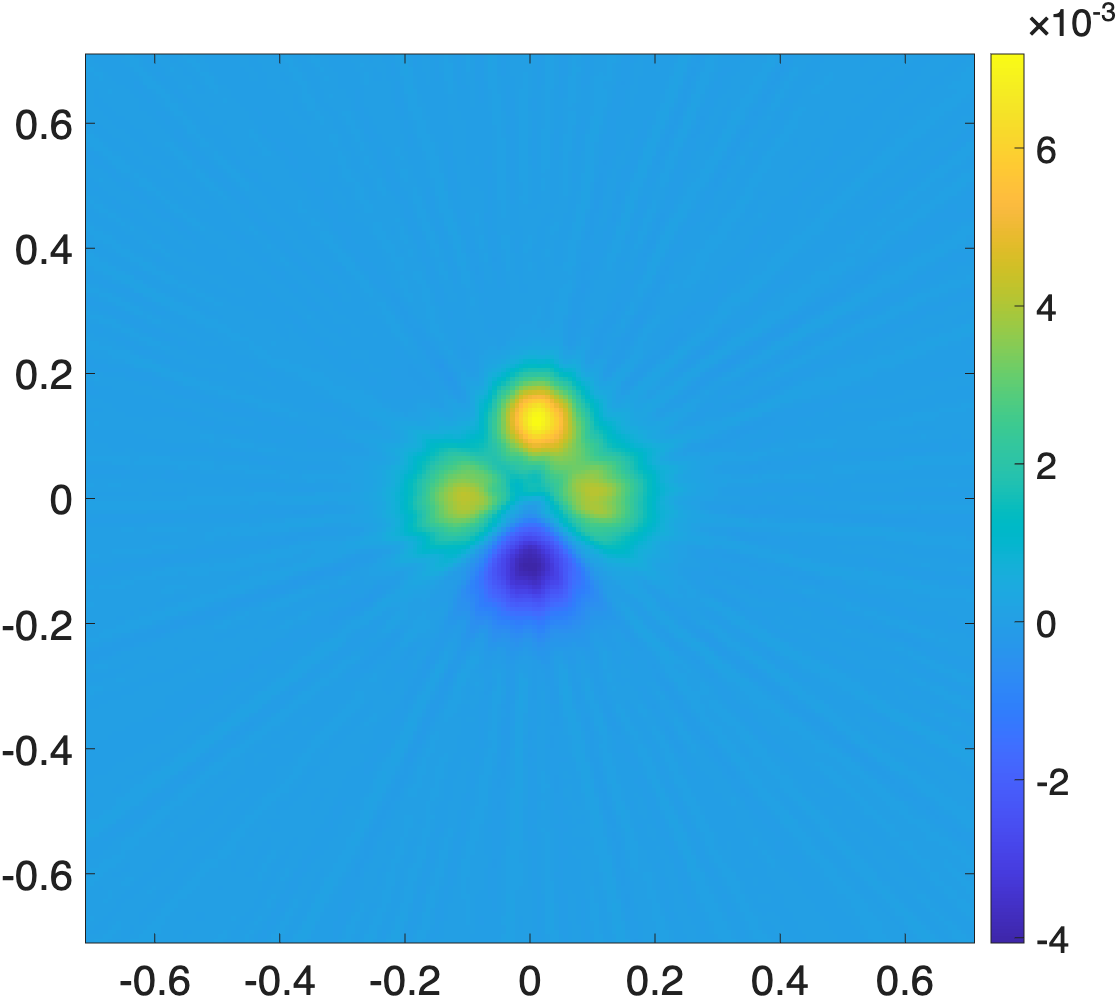}}\quad
    \subfigure[$\Re({\mathbf I}^\infty),\ \delta=5\%$]{\includegraphics[width=0.3\linewidth]{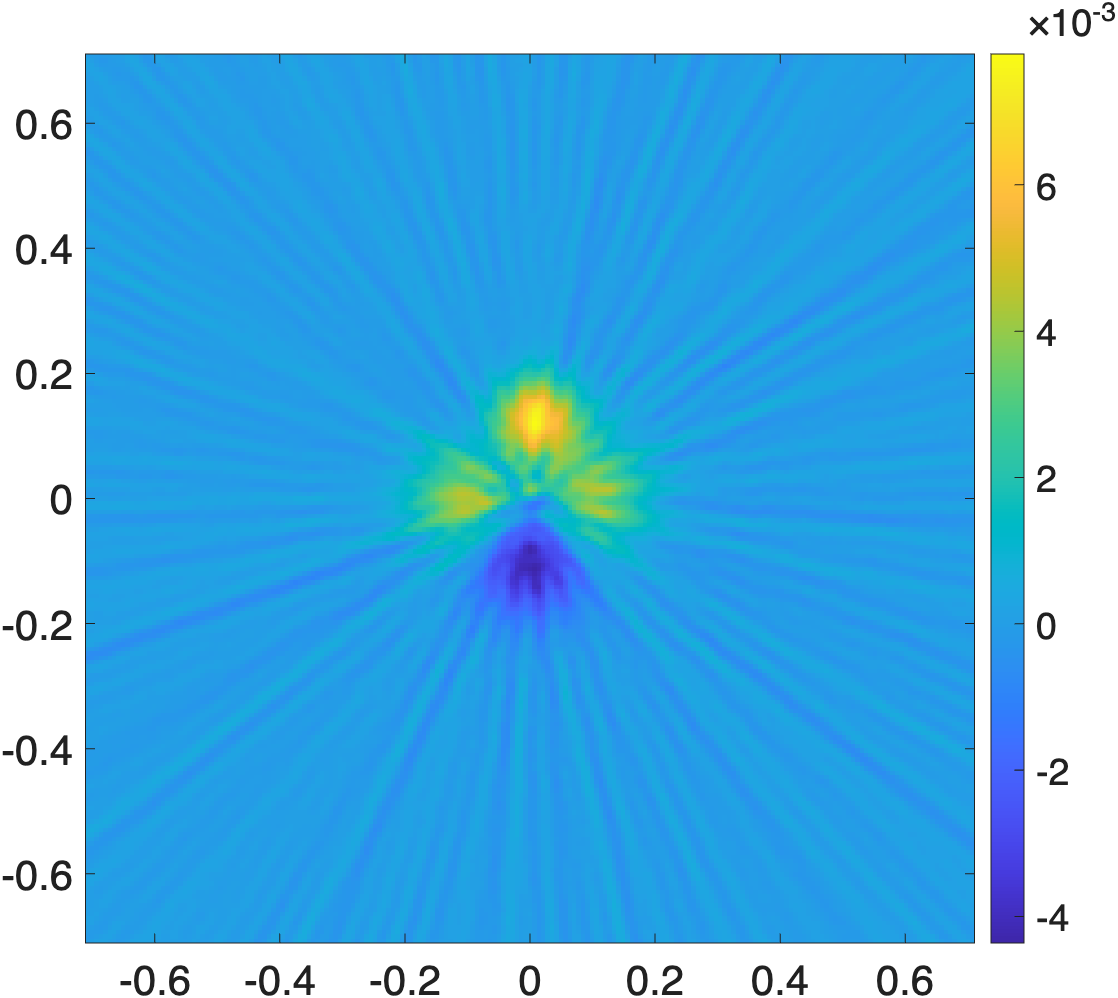}}\quad
    \subfigure[$\Re({\mathbf I}^\infty),\ \delta=10\%$]{\includegraphics[width=0.3\linewidth]{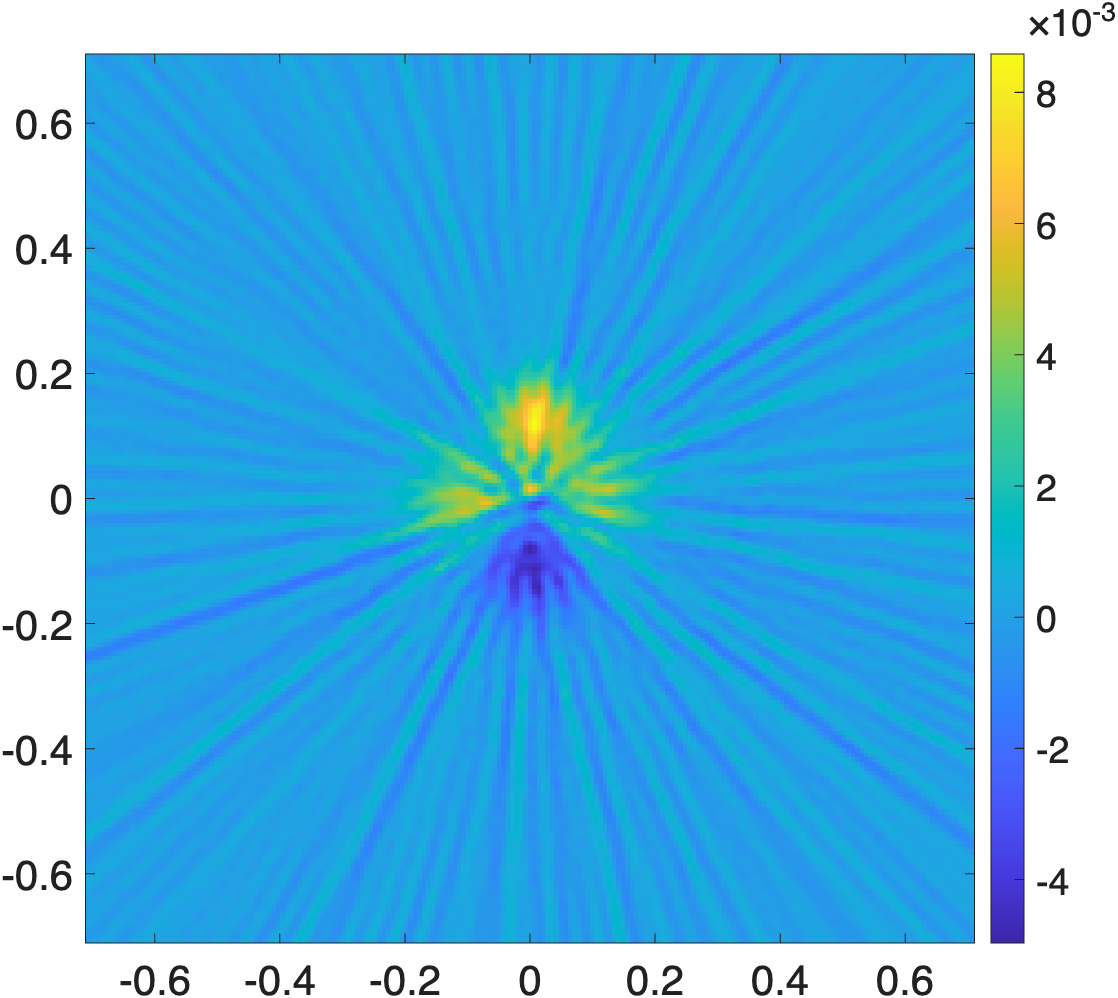}}\\
    \subfigure[$\Im({\mathbf I}^\infty),\ \delta=1\%$]{\includegraphics[width=0.3\linewidth]{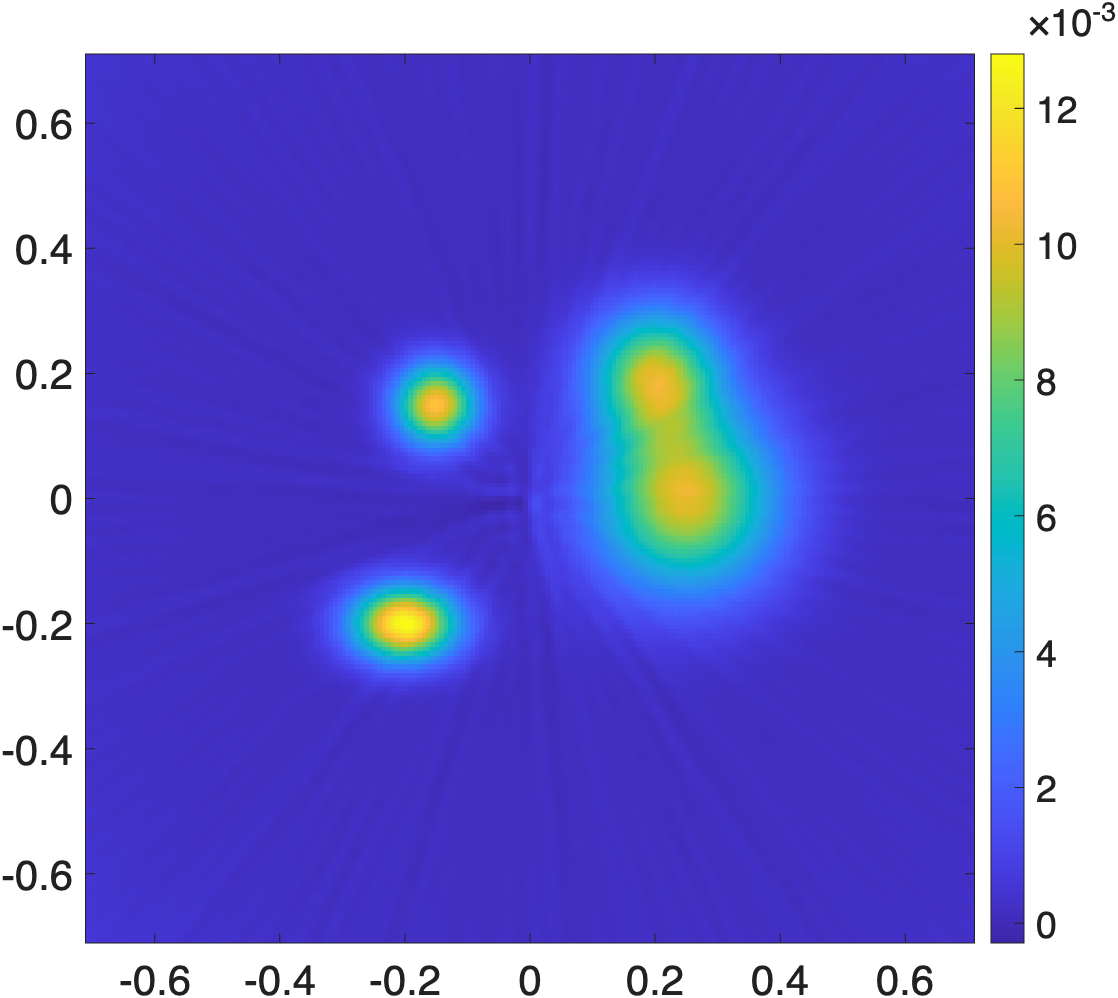}}\quad
    \subfigure[$\Im({\mathbf I}^\infty),\ \delta=5\%$]{\includegraphics[width=0.3\linewidth]{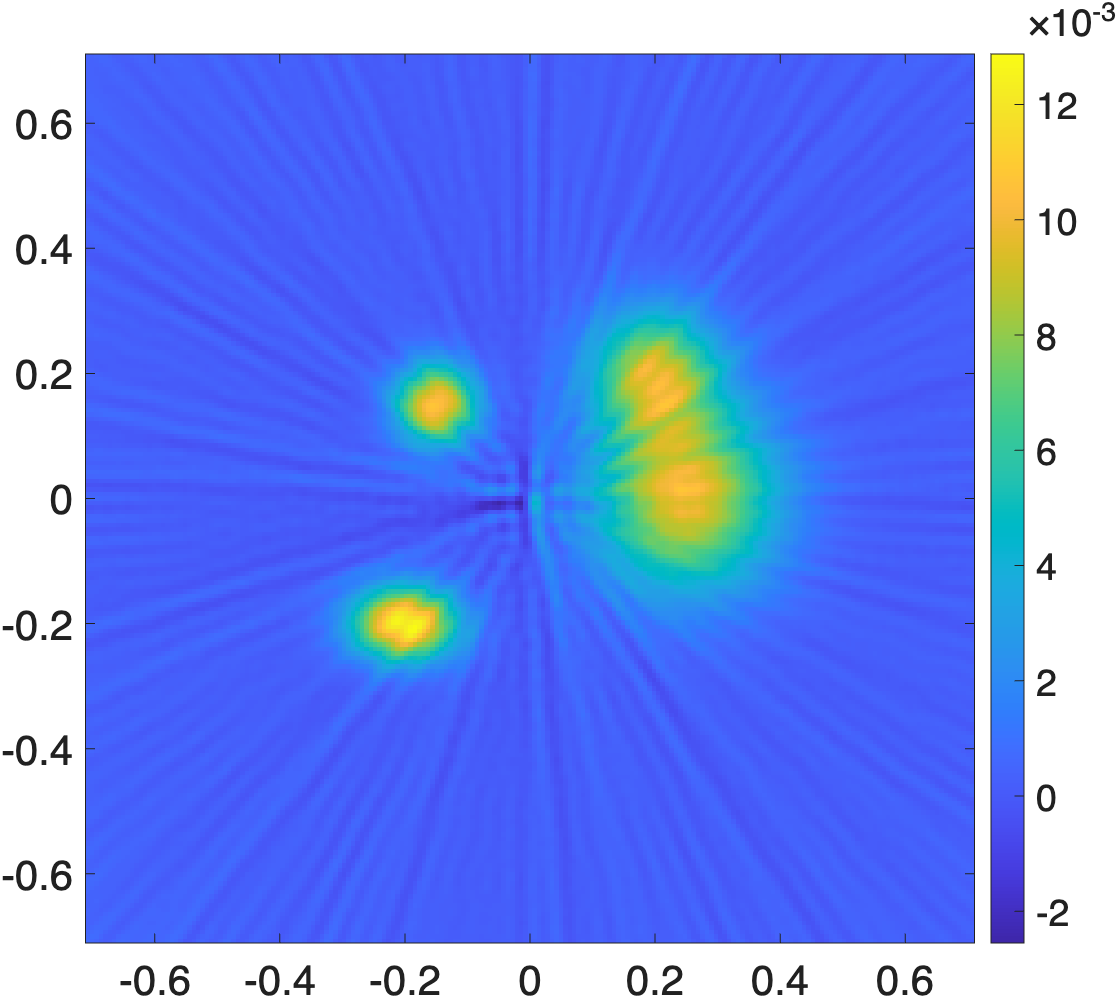}}\quad
    \subfigure[$\Im({\mathbf I}^\infty),\ \delta=10\%$]{\includegraphics[width=0.3\linewidth]{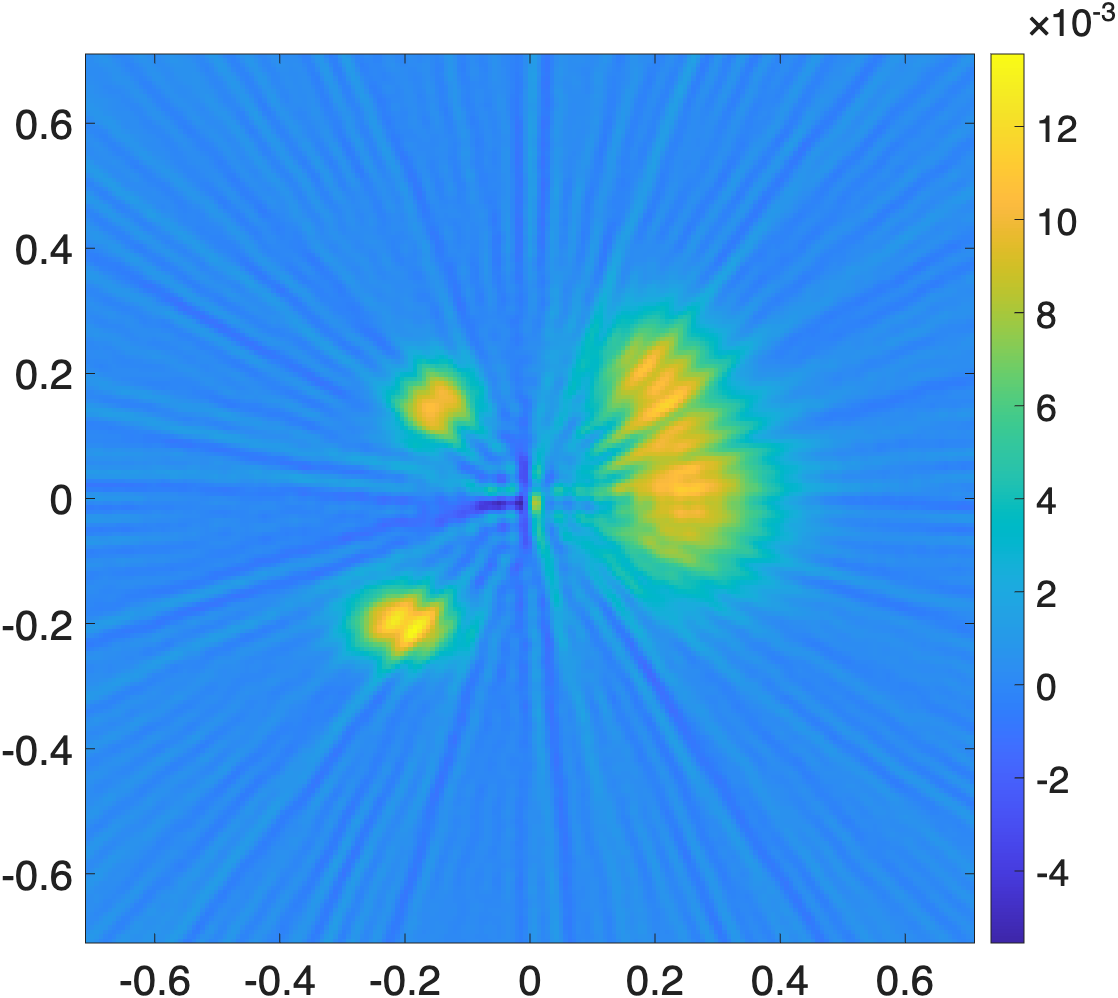}}
    \caption{Imaging the complex contrast by ${\mathbf I}^\infty$ with different noise levels.}
    \label{fig: 2D-example3-farField}
\end{figure}

\begin{figure}
    \centering
    \subfigure[$\Re({\mathbf I}^s),\ \delta=1\%$]{\includegraphics[width=0.3\linewidth]{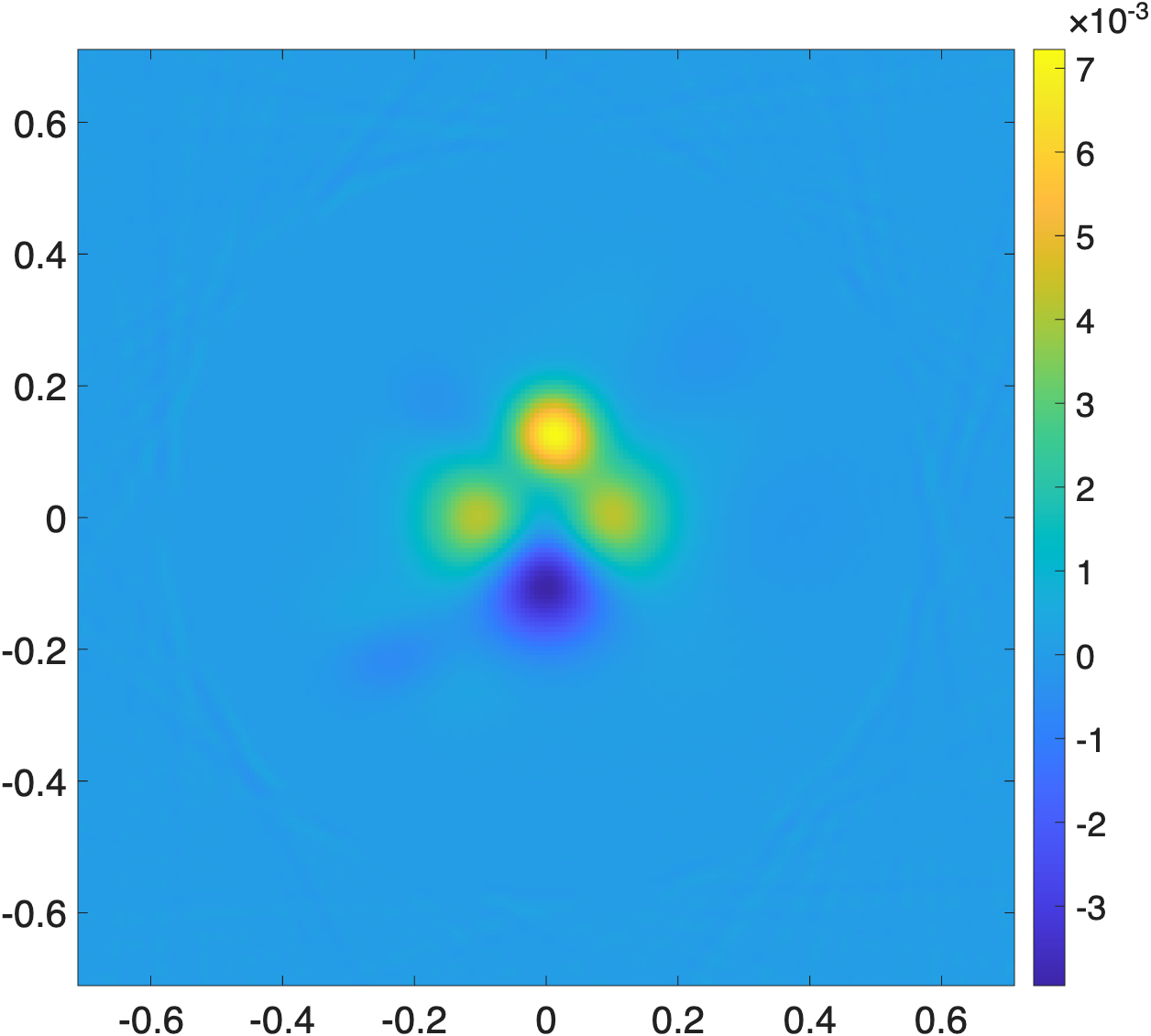}}\quad
    \subfigure[$\Re({\mathbf I}^s),\ \delta=5\%$]{\includegraphics[width=0.3\linewidth]{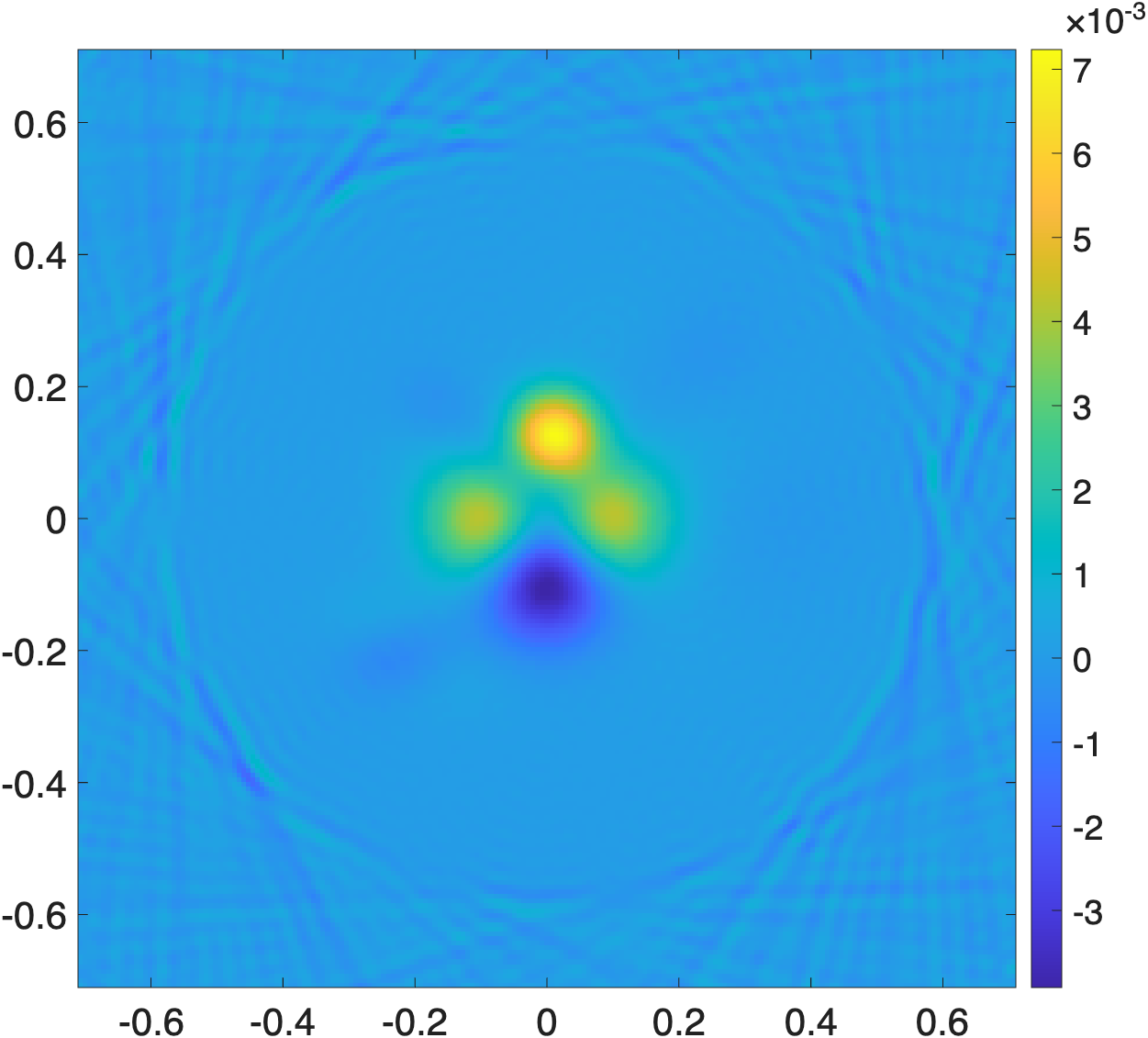}}\quad
    \subfigure[$\Re({\mathbf I}^s),\ \delta=10\%$]{\includegraphics[width=0.3\linewidth]{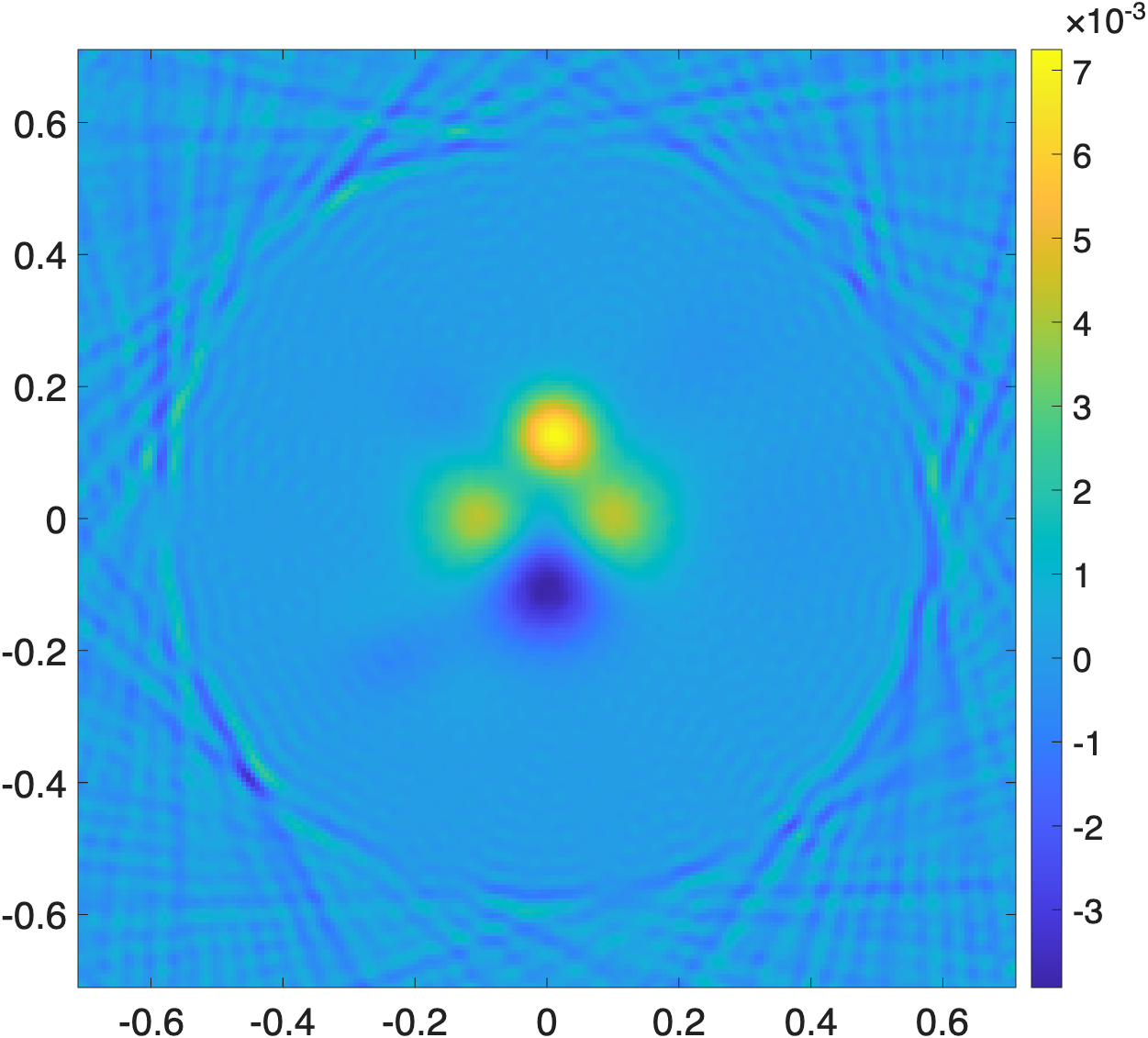}}\\
    \subfigure[$\Im({\mathbf I}^s),\ \delta=1\%$]{\includegraphics[width=0.3\linewidth]{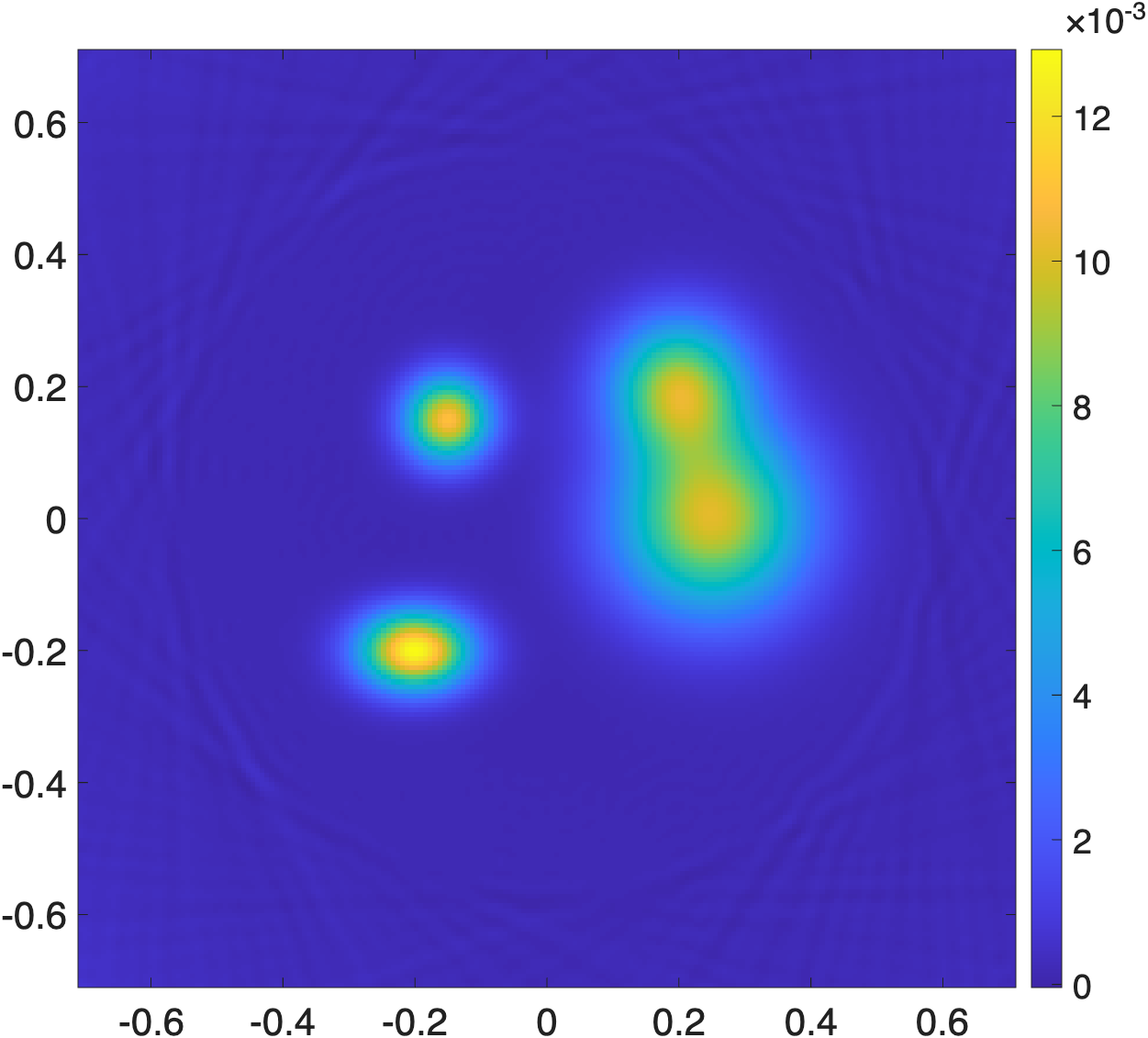}}\quad
    \subfigure[$\Im({\mathbf I}^s),\ \delta=5\%$]{\includegraphics[width=0.3\linewidth]{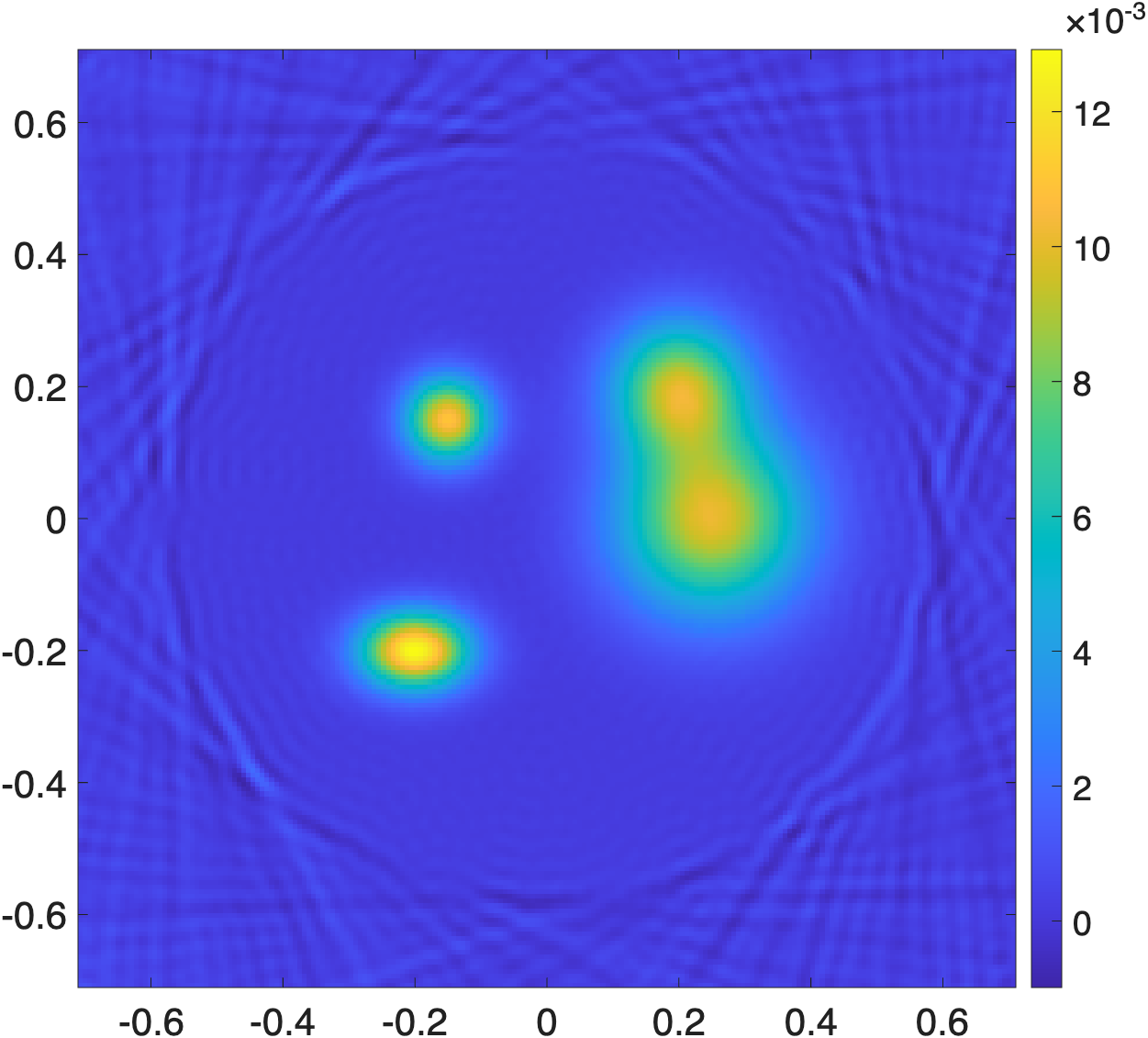}}\quad
    \subfigure[$\Im({\mathbf I}^s),\ \delta=10\%$]{\includegraphics[width=0.3\linewidth]{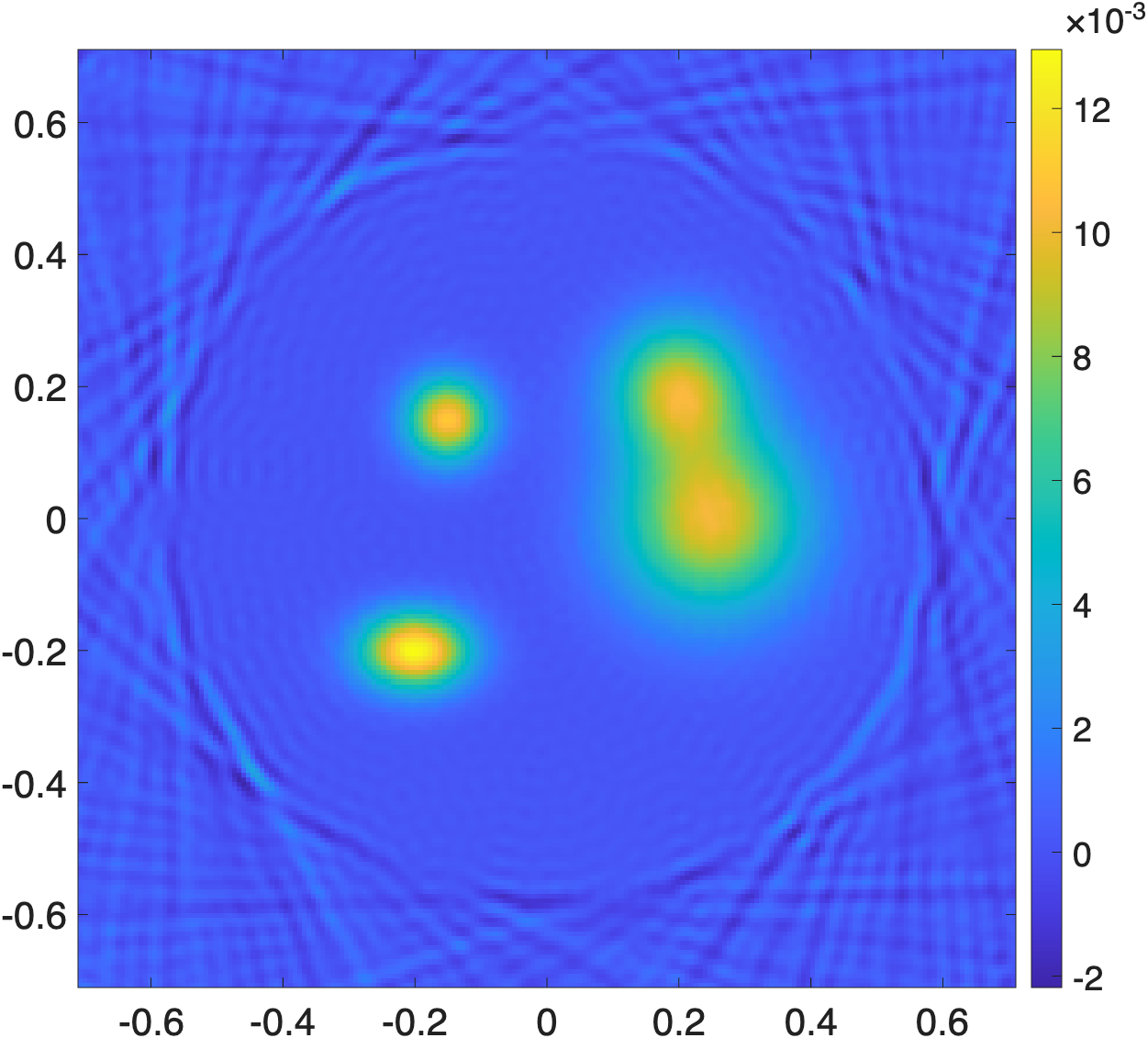}}
    \caption{Imaging the complex contrast by ${\mathbf I}^s$ with $R=10$ and different noise levels.}
    \label{fig: 2D-example3-nearField}
\end{figure}

\end{example}

\subsection{Three-dimensional examples}

In $\mathbb{R}^3$, we construct the set of observation directions using the Fibonacci lattice points:
$$
\hat{x}_\ell=(x_{\ell,1}, x_{\ell, 2}, x_{\ell, 3})^\top,\quad \ell=1,2,\cdots, L
$$
with
\begin{align*}
    &x_{\ell, 3}=1-\frac{2\ell}{L},\,\\
    &x_{\ell,1}=\sqrt{1-x_{\ell, 3}^2}\cos\left((\sqrt{5}-1)\pi \ell\right),\\
    &x_{\ell,2}=\sqrt{1-x_{\ell,3}^2}\sin\left((\sqrt{5}-1)\pi \ell\right).
\end{align*}
These lattice points are nearly uniformly distributed over the unit sphere $\mathbb{S}^2$, allowing us to approximate the spherical area element $\mathrm{d}s_{\hat{x}}$ with the uniform sampling weight ${4\pi}/{L}$.

For the near-field case in $\mathbb{R}^3$ the transmitters and/or receivers are also pseudo-equidistantly distributed on a sphere generated by the Fibonacci lattice. For an illustration of such a Fibonacci lattice, Figure \ref{fig: 3D-Fibonacci-lattice} demonstrates the 256 incident directions on $\mathbb{S}^2$ (denoted by the small red balls).

\begin{figure}
    \centering
    \includegraphics[width=0.3\linewidth]{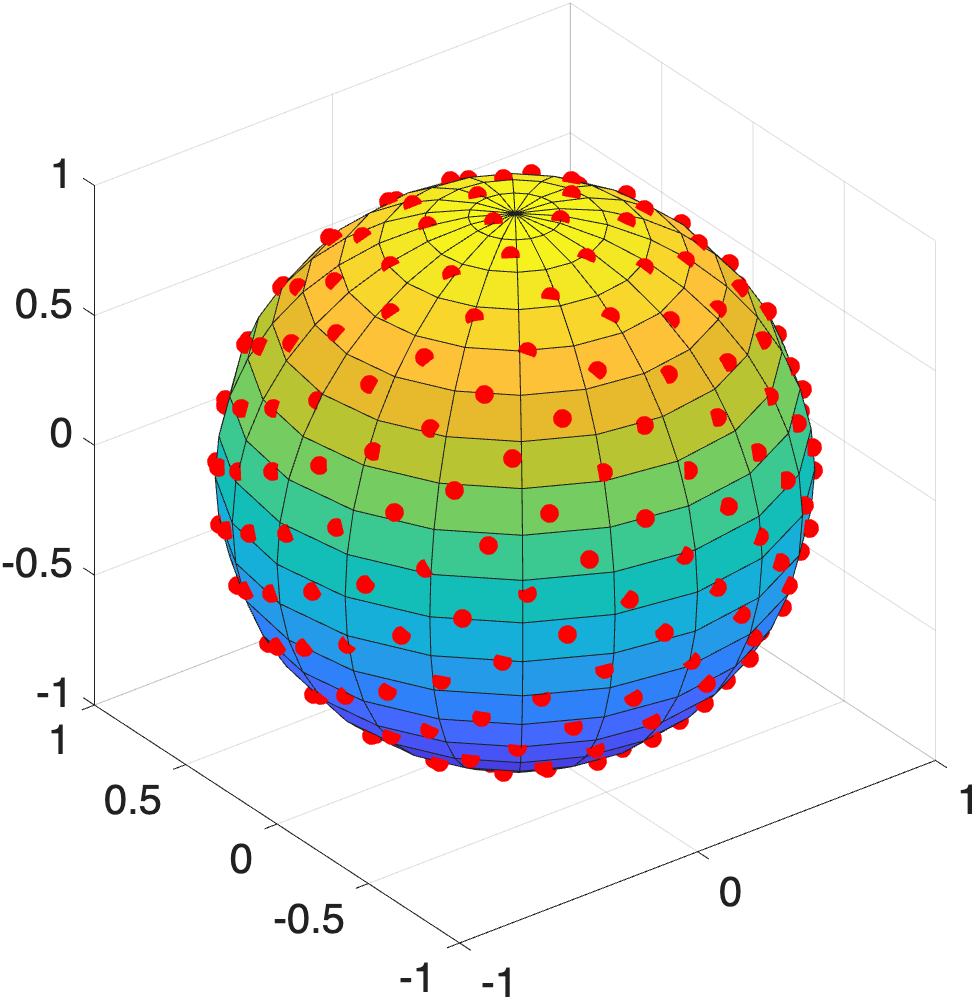}
    \caption{The 256 incident directions on the unit sphere $\mathbb{S}^2$.}
    \label{fig: 3D-Fibonacci-lattice}
\end{figure}

In terms of $\gamma_3(k)=1/(4\pi)$, the discrete forms of the 3D version indicator functions \eqref{I-infty} and \eqref{I-s} are written correspondingly as
$$
{\mathbf I}^{\infty}(z) = \frac{4\Delta\theta\Delta k}{\pi^2}\sum_{m=1}^{N_k} \sum_{j=1}^{N_\theta} u_b^{\infty, \delta}(-\theta_j, \theta_j, k_m)e^{-2{\rm i}k_m \theta_j\cdot z}
$$
and
$$
{\mathbf I}^s(z) =\frac{16R^2\Delta\theta\Delta k}{\pi}\sum\limits_{m=1}^{N_k}\sum\limits_{j=1}^{N_\theta}u_b^{s,\delta}(-x_j, x_j, k_m)e^{2{\rm i}k_m (\theta_j\cdot z-R)},
$$
where $\{\theta_j\}_{j=1}^{N_\theta}$ are the Fibonacci lattices on $\mathbb{S}^2$ such that $x_j=R\theta_j(j=1,2,\cdots,N_\theta)$. 

\begin{example}\label{ex: 3D_smooth}
    The first 3D example is devoted to the reconstruction of a smooth complex-valued contrast. The ground-truth contrast is defined as 
    $$
    q(x)=q(x_1, x_2, x_3) = C_s q^{\ast}(x_1, x_2, x_3),\quad x=(x_1, x_2, x_3)\in\mathbb{R}^3,
    $$
    where $C_s>0$ denotes the scaling factor and
    \begin{align*}
     \Re(q^{\ast}(x_1, x_2, x_3))& = 3(1-x_1)^2 e^{-500x_1^2-800(x_2-0.1)^2-600x_3^2} -10\left(\frac{x}{5}-x_1^3-x_2^5\right)e^{-400(x_1-0.1)^2-300x_2^2-500x_3^2}\\
                          &\quad -\frac{1}{3}e^{-450(x_1-0.1)^2-600x_2^2-700x_3^2},\\
    \Im(q^{\ast}(x_1, x_2, x_3))& = 3 e^{-300x_1^2-200(x_2+0.05)^2-350x_3^2} +5e^{-180(x_1-0.1)^2-350x_2^2-250x_3^2}.
    \end{align*}

    Figure \ref{fig: 3D_smooth_exact} shows the contours of the exact contrast in different slice planes. In the experiments concerning inversion, we choose the parameters $\Delta k=2, k_{\rm min}=1, k_{\rm max}=61$ and near-field measurements are collected at $N_\theta=256$ receivers with measurement noise level $\delta =1\%$ on a sphere with radius $R=5$.  
    
\begin{figure}
    \centering
    \subfigure[$\Re(q)$ in $x_1=-0.2, 0, 0.2$]{\includegraphics[width=0.3\linewidth]{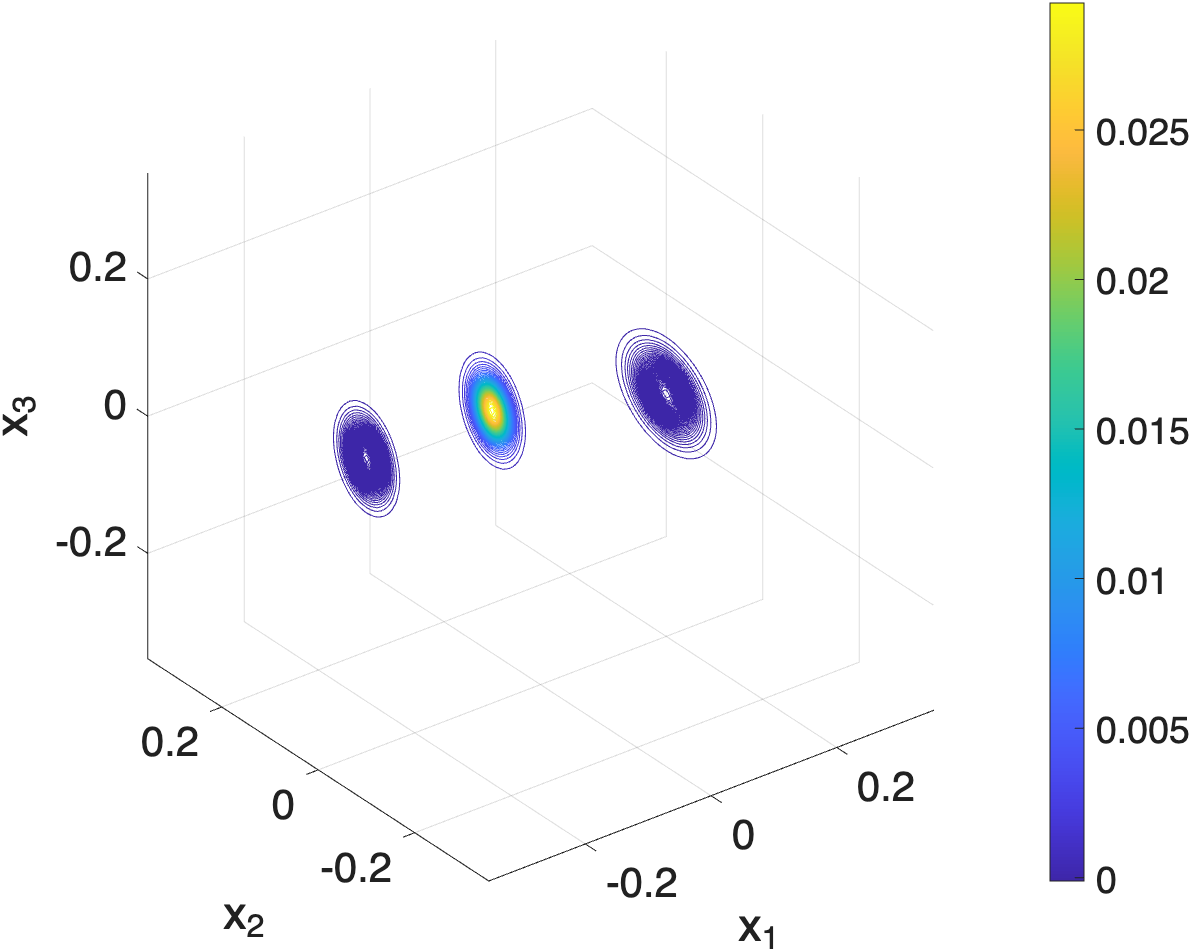}}\quad
    \subfigure[$\Re(q)$ in $x_2=-0.2, 0, 0.2$]{\includegraphics[width=0.3\linewidth]{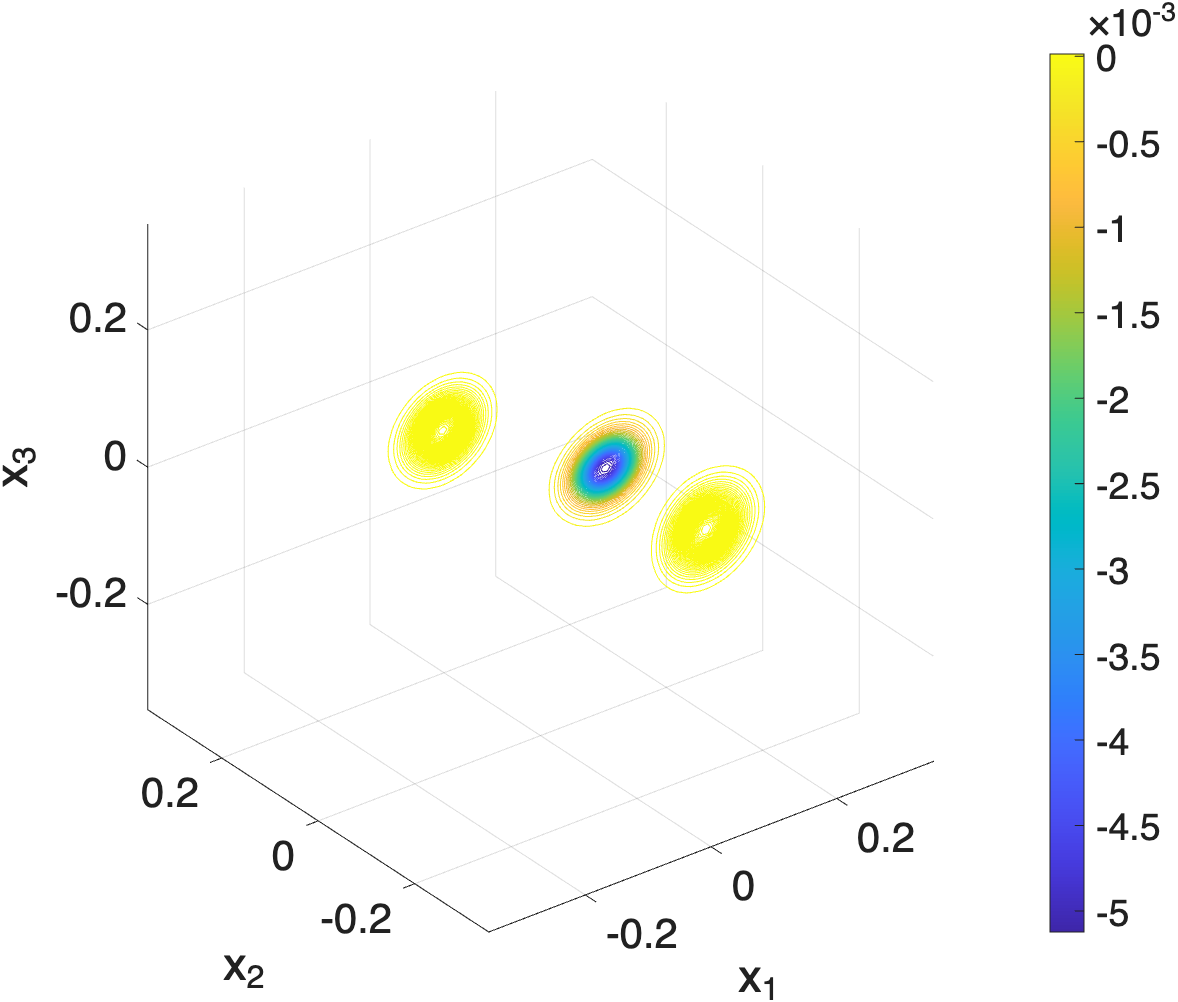}}\quad
    \subfigure[$\Re(q)$ in $x_3=-0.2, 0, 0.2$]{\includegraphics[width=0.3\linewidth]{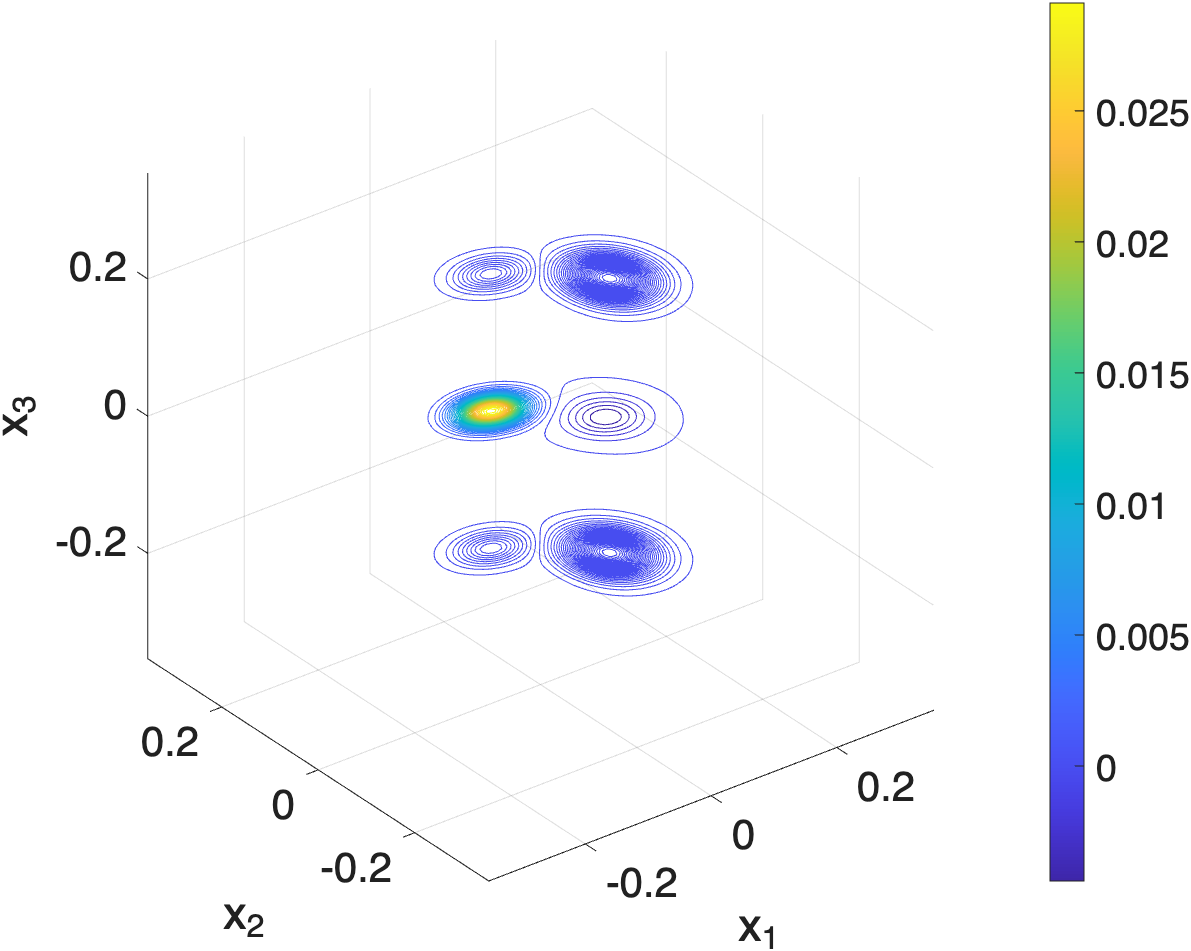}}\\
    \subfigure[$\Im(q)$ in $x_1=-0.2, 0, 0.2$]{\includegraphics[width=0.3\linewidth]{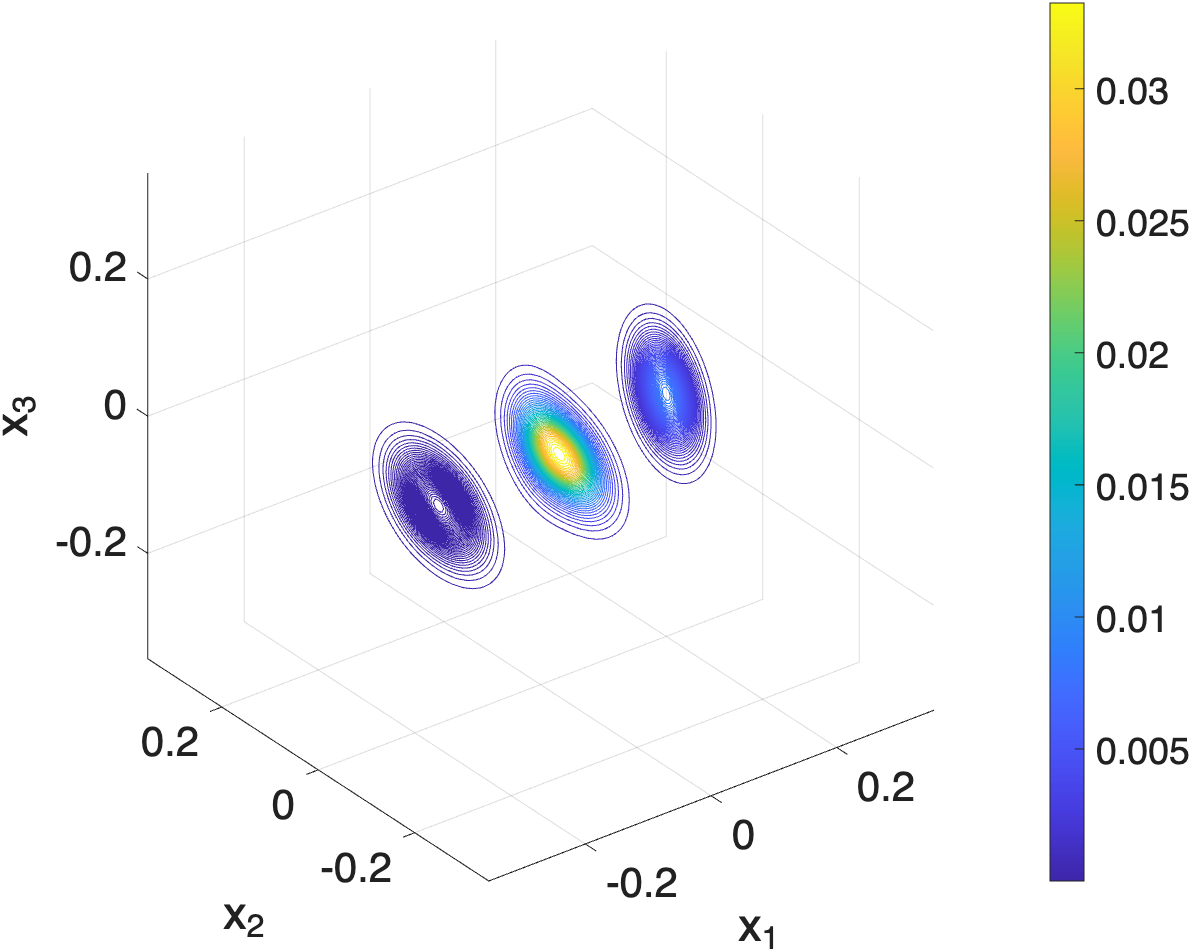}}\quad
    \subfigure[$\Im(q)$ in $x_2=-0.2, 0, 0.2$]{\includegraphics[width=0.3\linewidth]{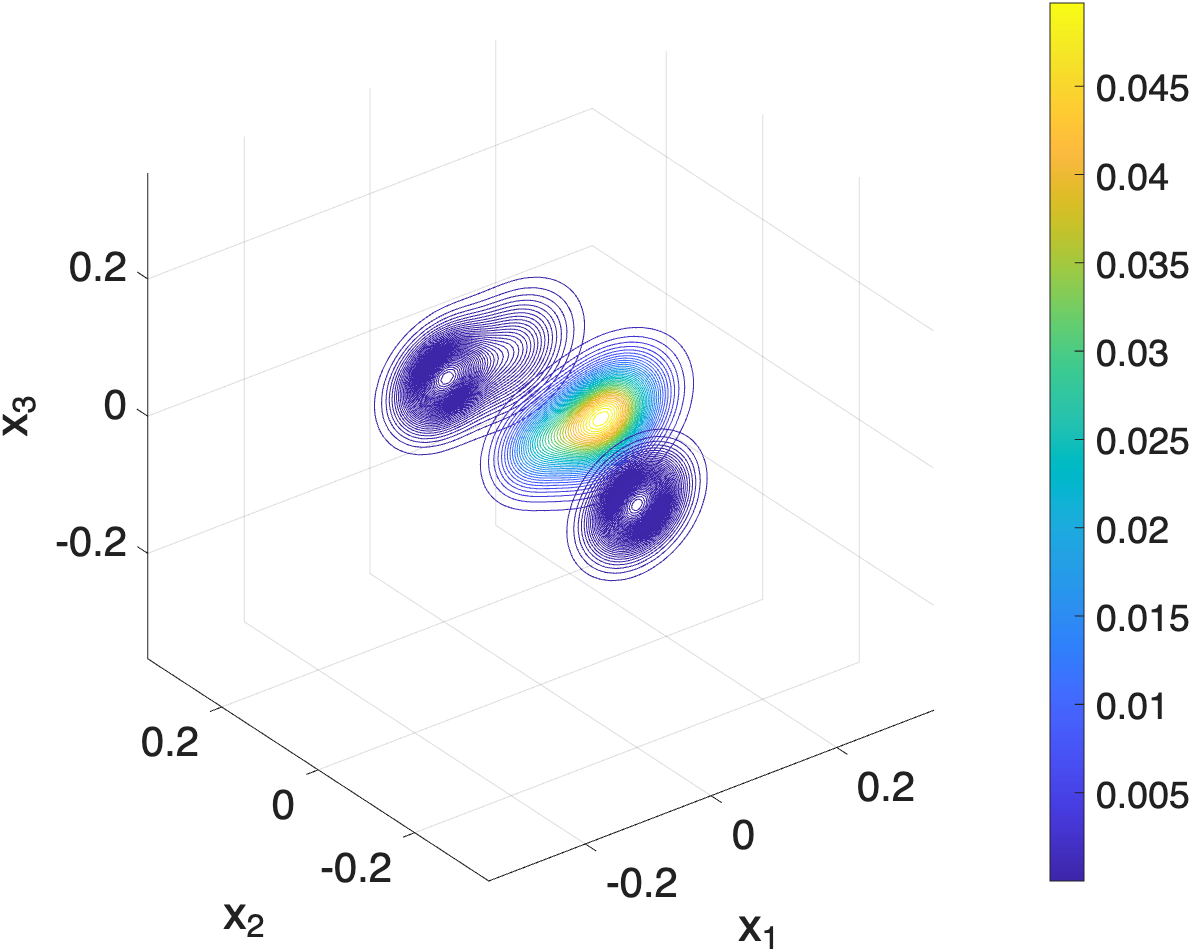}}\quad
    \subfigure[$\Im(q)$ in $x_3=-0.2, 0, 0.2$]{\includegraphics[width=0.3\linewidth]{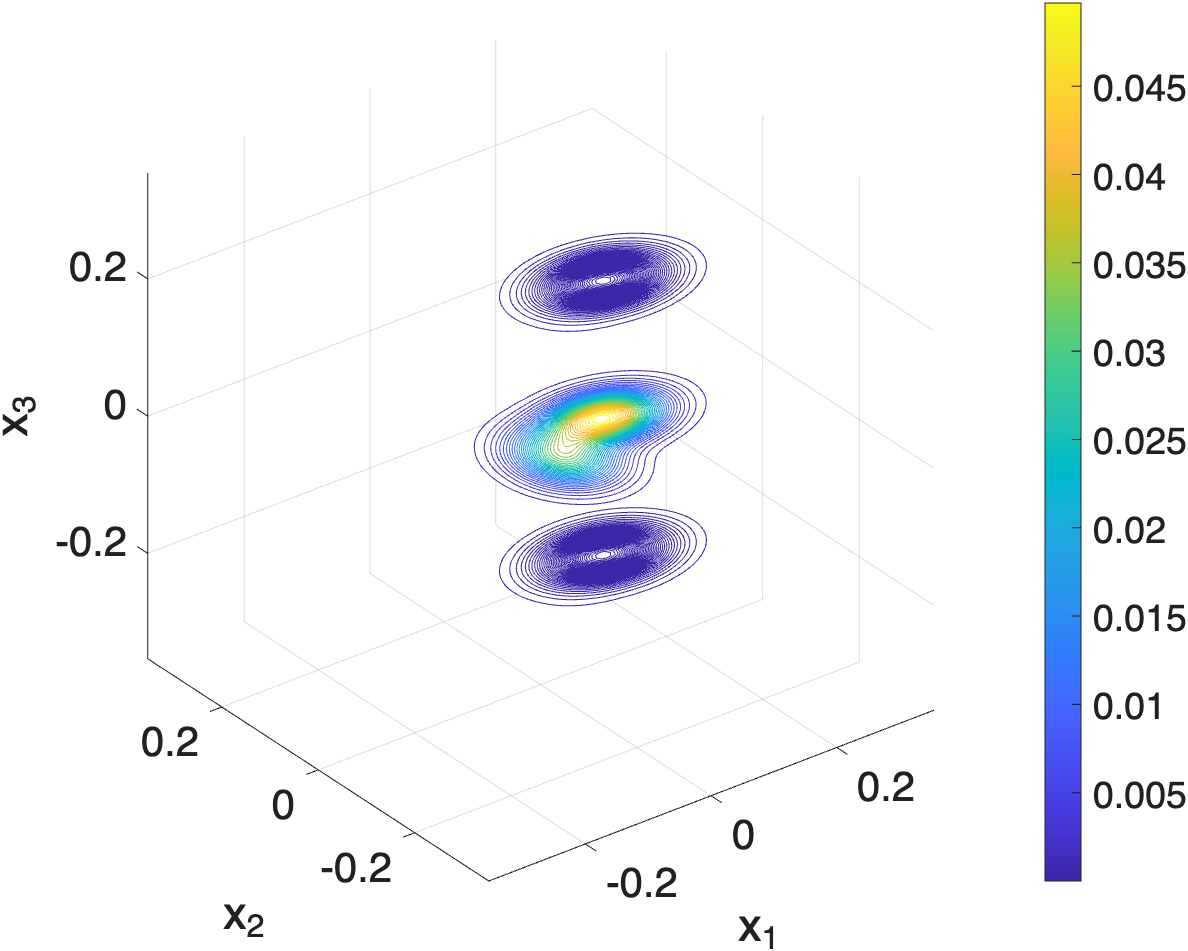}}
    \caption{Contour plots of the ground-truth contrast $q$ in slice planes ($C_s=10^{-2}$).}
    \label{fig: 3D_smooth_exact}
\end{figure}

We first select $C_s=10^{-2}$ and evaluate the indicator ${\mathbf I}^s$ over the uniformly distributed $101^3$ sampling grid in the region $[-0.35, 0.35]^3$. Figure \ref{fig: 3D_smooth_reconstruction_slice} depicts the corresponding reconstructions in the form of contour slices of the indicator. By comparing the true contrast in Figure \ref{fig: 3D_smooth_exact} and the recovered results in Figure \ref{fig: 3D_smooth_reconstruction_slice}, one can readily find that the proposed indicator well captures the target contrast.

\begin{figure}
    \centering
    \subfigure[$\Re({\mathbf I}^s)$ in $x_1=0$]{\includegraphics[width=0.3\linewidth]{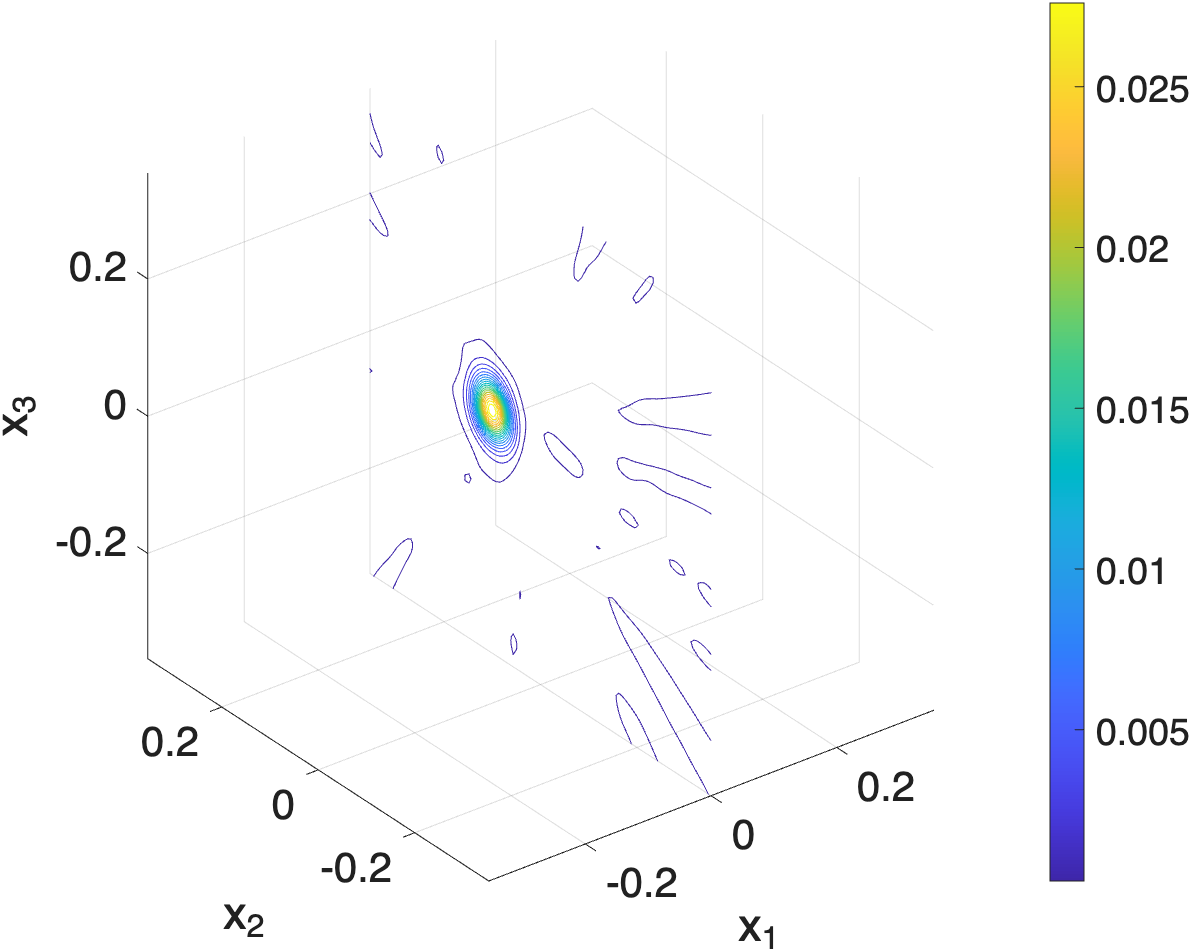}}\quad
    \subfigure[$\Re({\mathbf I}^s)$ in $x_2=0$]{\includegraphics[width=0.3\linewidth]{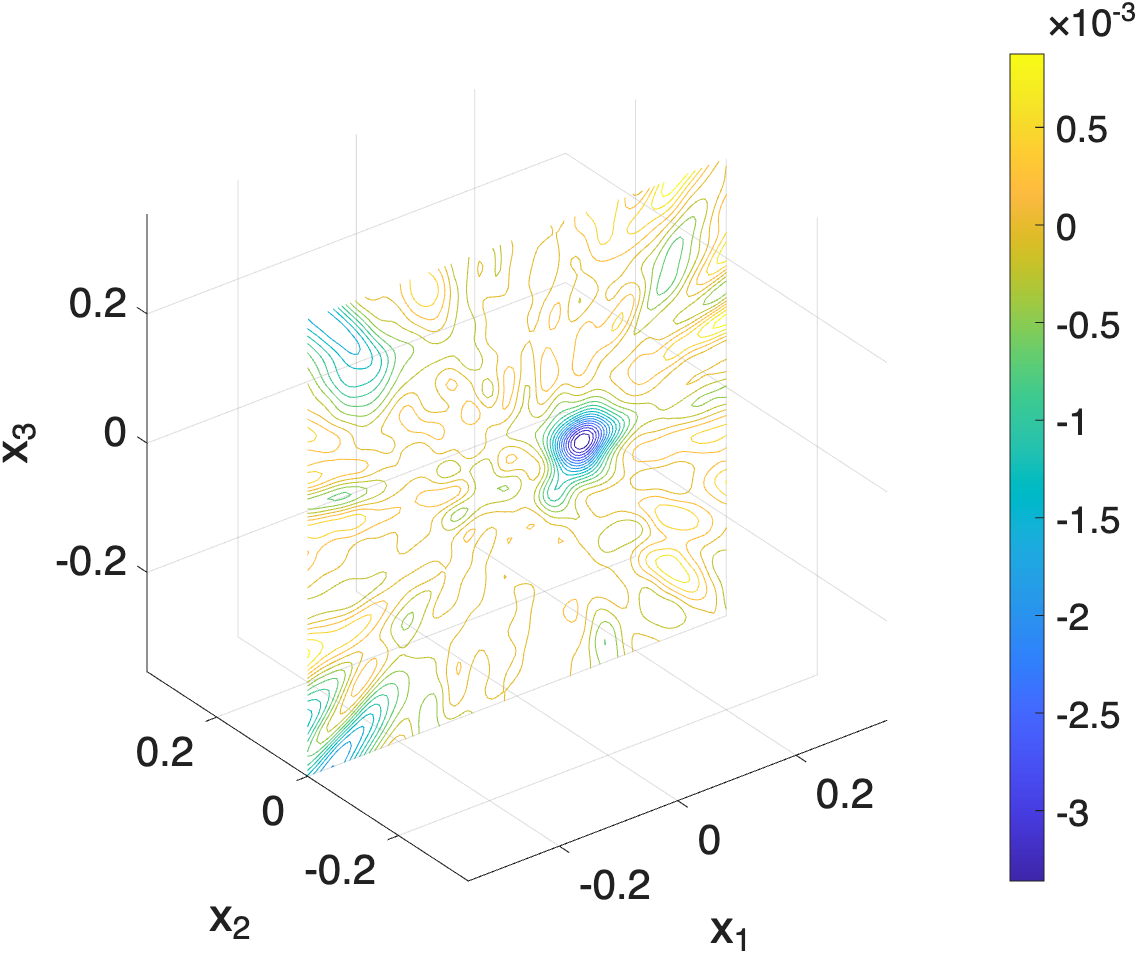}}\quad
    \subfigure[$\Re({\mathbf I}^s)$ in $x_3=0$]{\includegraphics[width=0.3\linewidth]{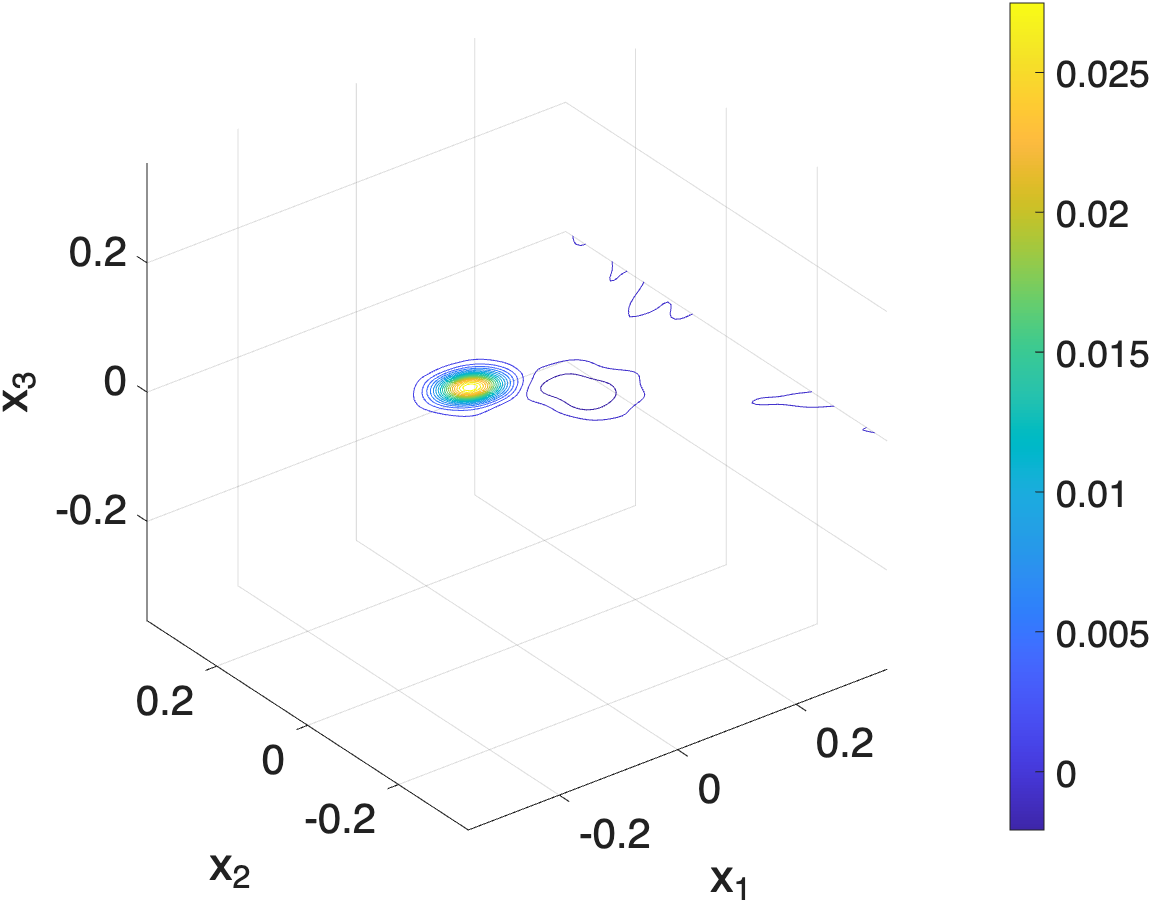}}\\
    \subfigure[$\Im({\mathbf I}^s)$ in $x_1=0$]{\includegraphics[width=0.3\linewidth]{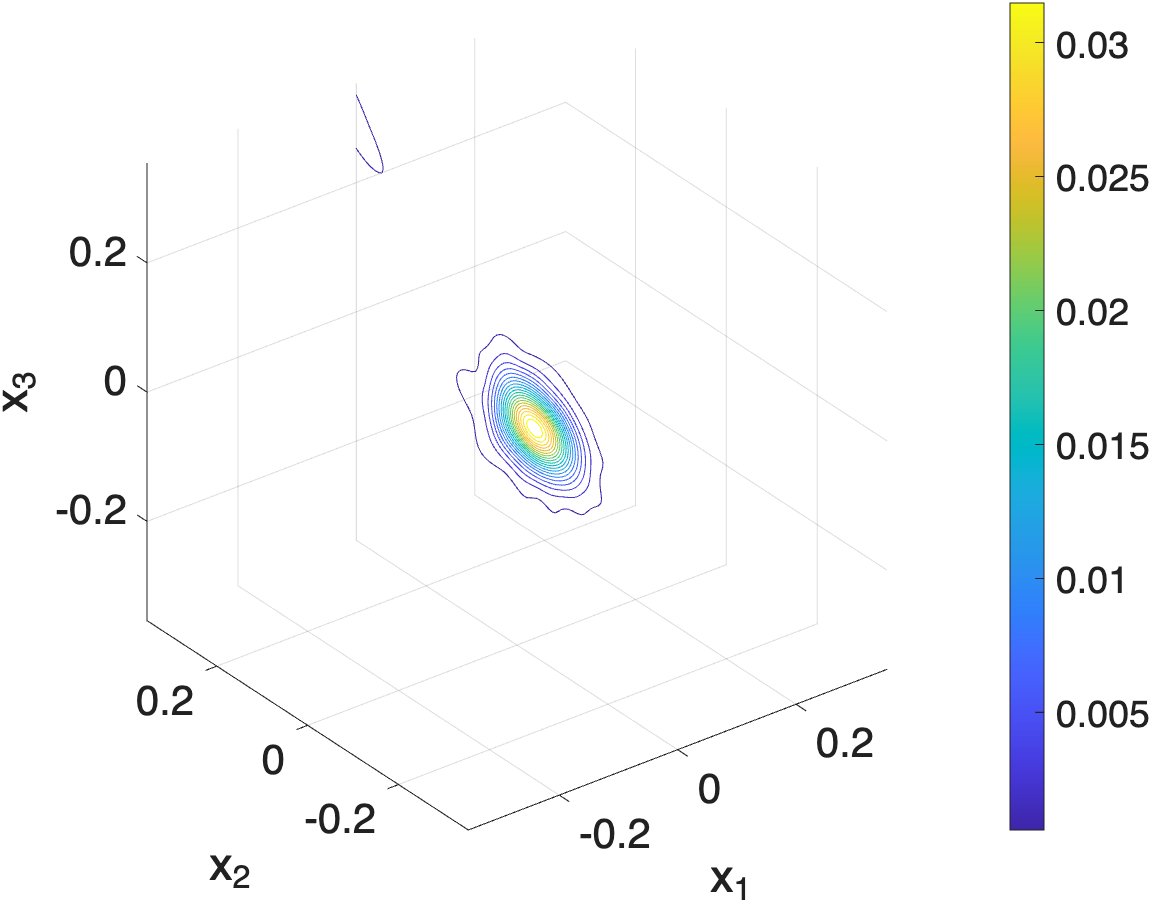}}\quad
    \subfigure[$\Im({\mathbf I}^s)$ in $x_2=0$]{\includegraphics[width=0.3\linewidth]{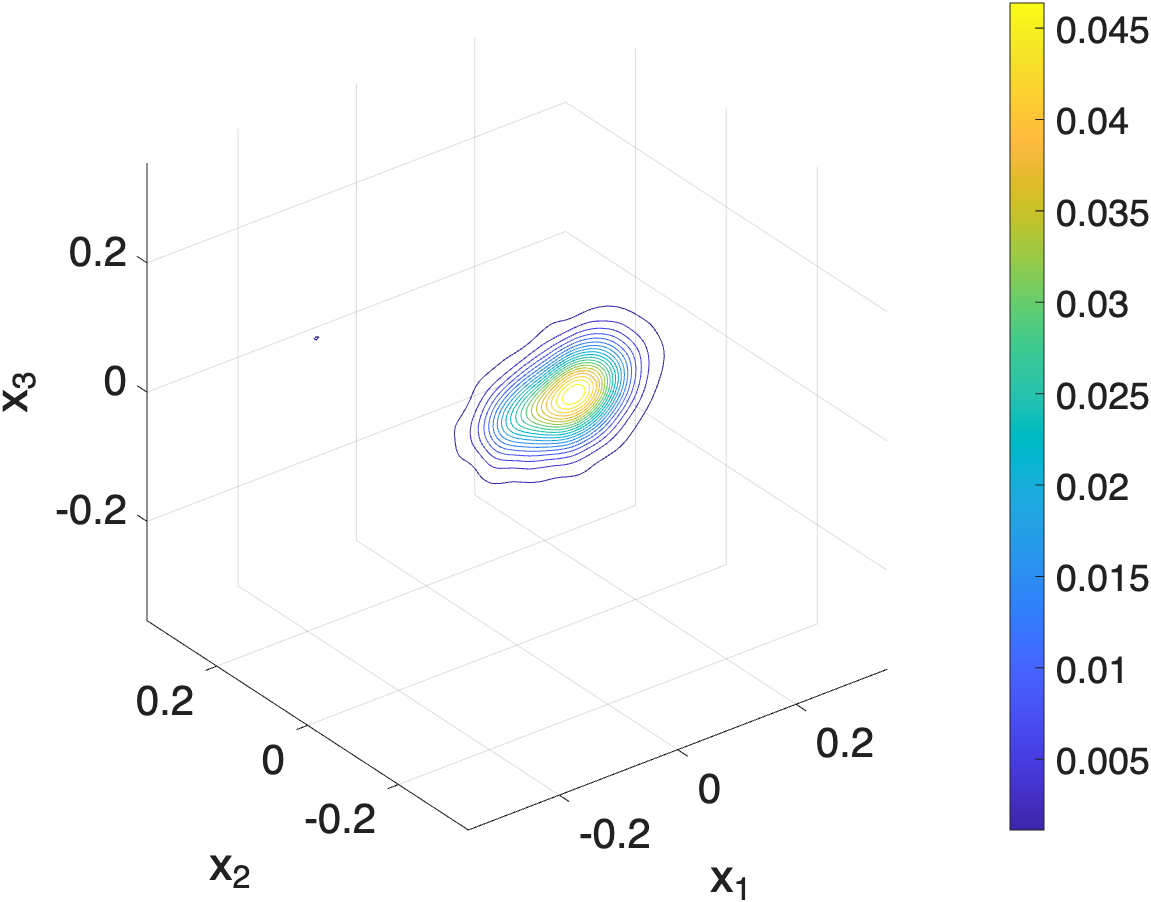}}\quad
    \subfigure[$\Im({\mathbf I}^s)$ in $x_3=0$]{\includegraphics[width=0.3\linewidth]{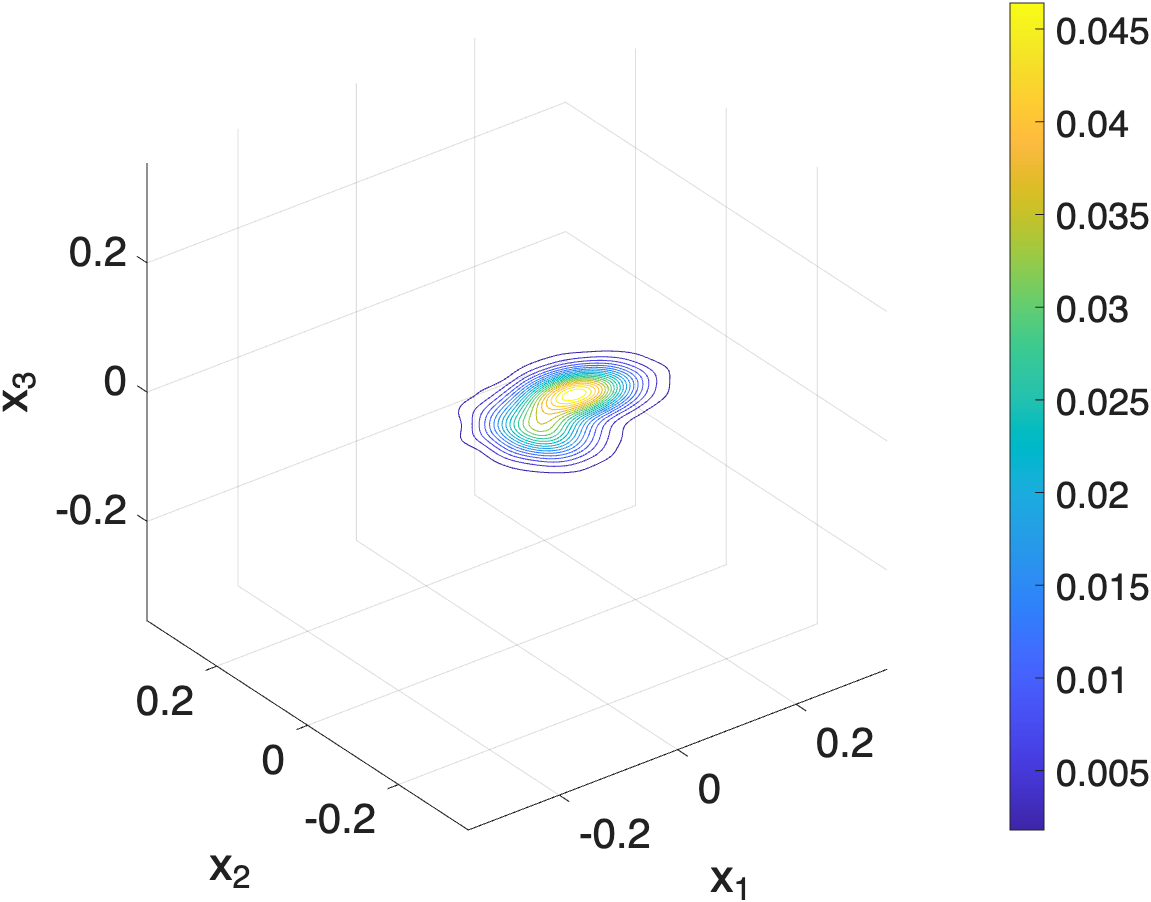}}
    \caption{Contour plots of ${\mathbf I}^s$ in slice planes ($C_s=10^{-2}$).}
    \label{fig: 3D_smooth_reconstruction_slice}
\end{figure}

From another perspective of viewing the accuracy of reconstruction, we further compare $q$ with the indicator ${\mathbf I}^s$ via the iso-surface plots, as demonstrated in Figure \ref{fig: 3D_smooth_reconstruction_isosurface}. These results strengthen the validation of the high-quality reconstructions produced by the QDSM.

\begin{figure}
    \centering
    \subfigure[$\Re(q)$]{\includegraphics[width=0.3\linewidth]{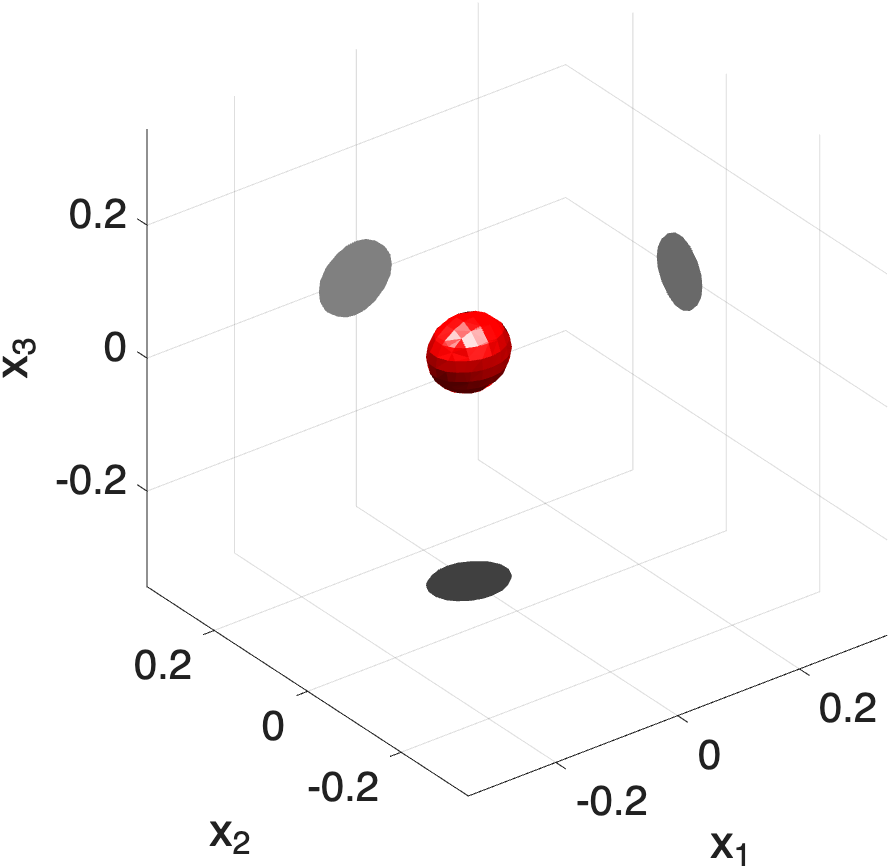}}\quad
    \subfigure[$\Im(q)$]{\includegraphics[width=0.3\linewidth]{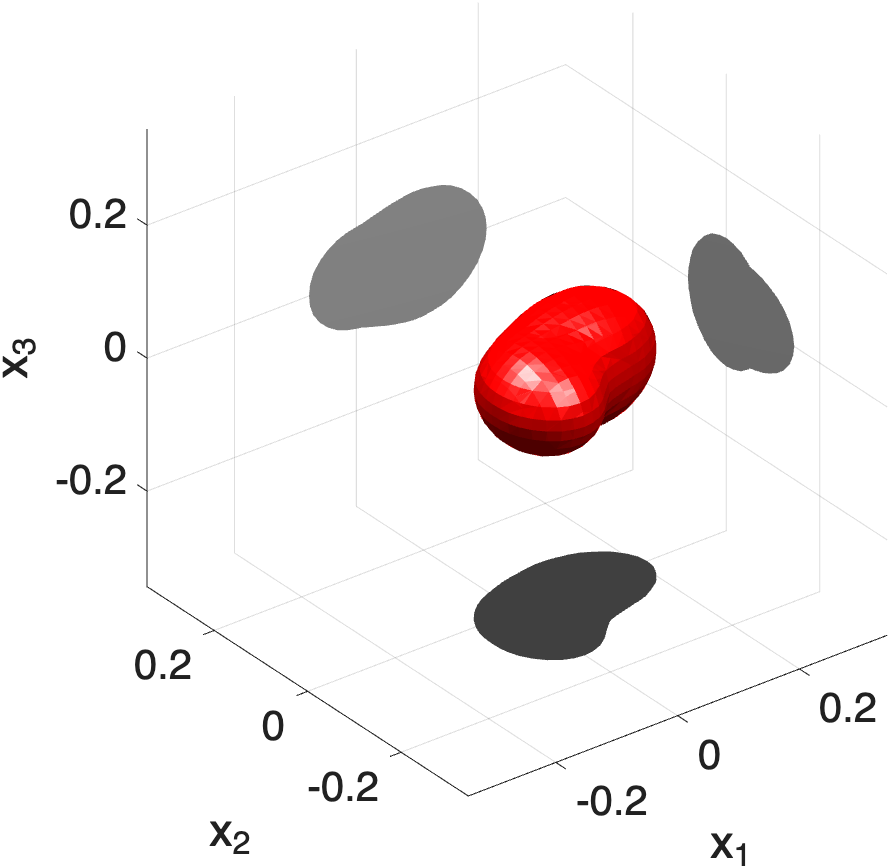}}\quad
    \subfigure[$|q|$]{\includegraphics[width=0.3\linewidth]{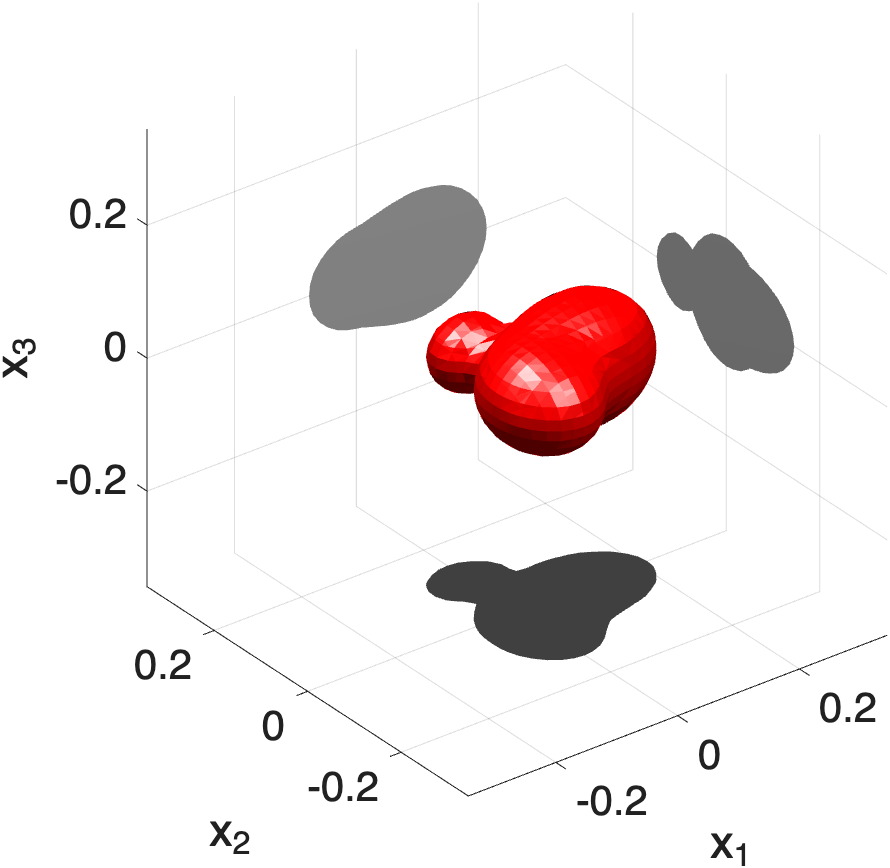}}\\
    \subfigure[$\Re({\mathbf I}^s)$]{\includegraphics[width=0.3\linewidth]{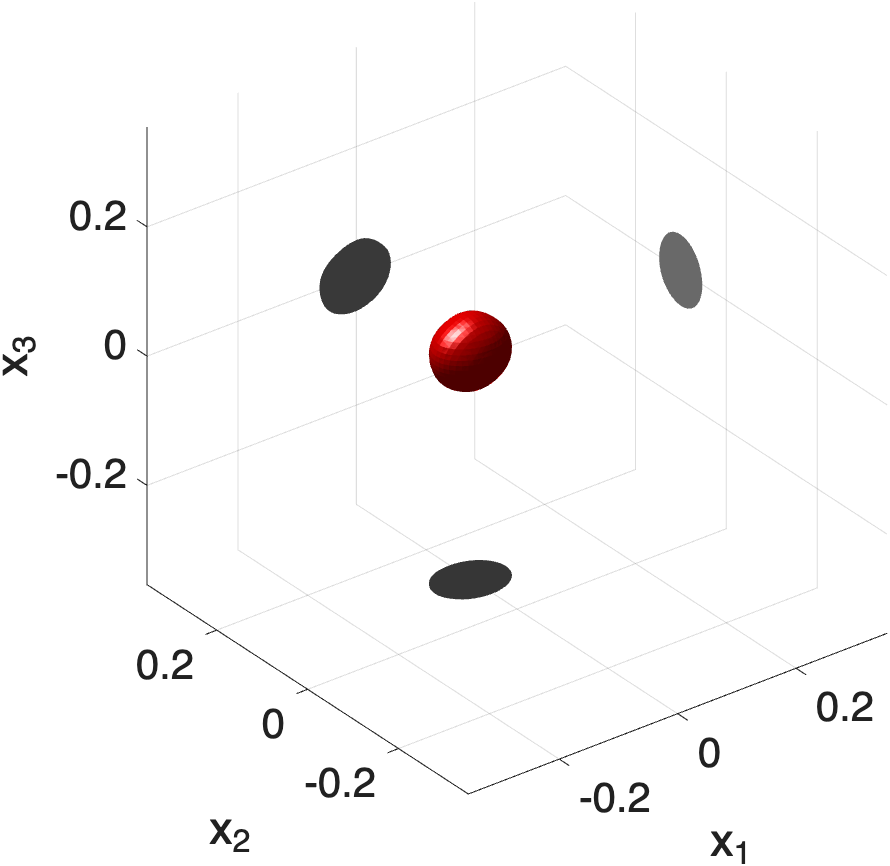}}\quad
    \subfigure[$\Im({\mathbf I}^s)$]{\includegraphics[width=0.3\linewidth]{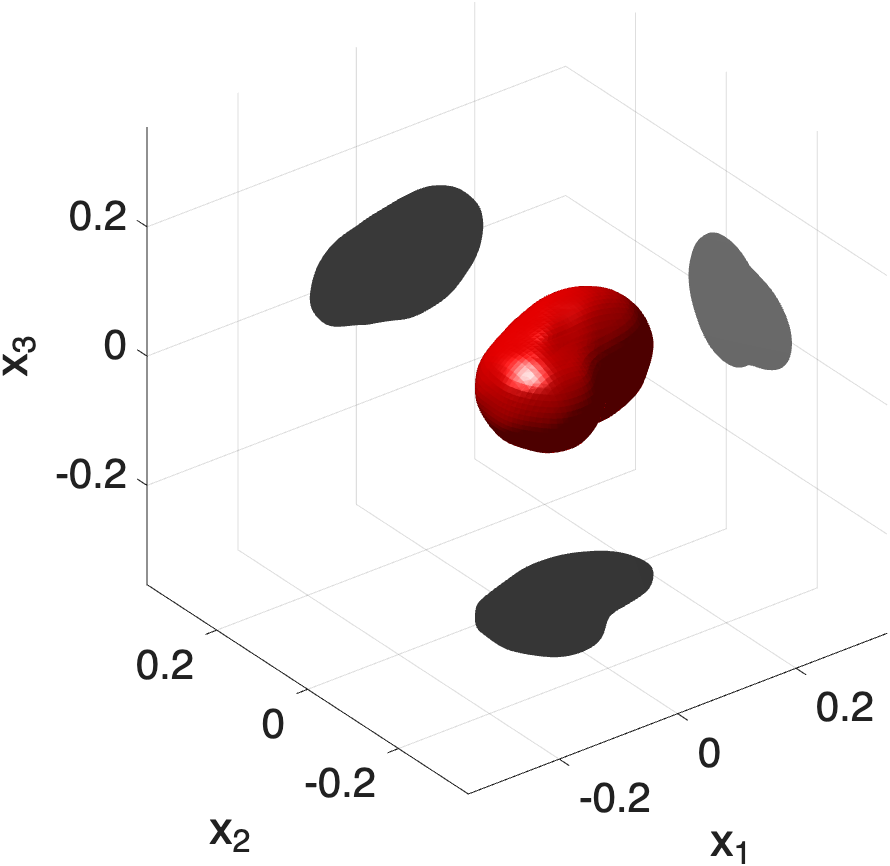}}\quad
    \subfigure[$|{\mathbf I}^s|$]{\includegraphics[width=0.3\linewidth]{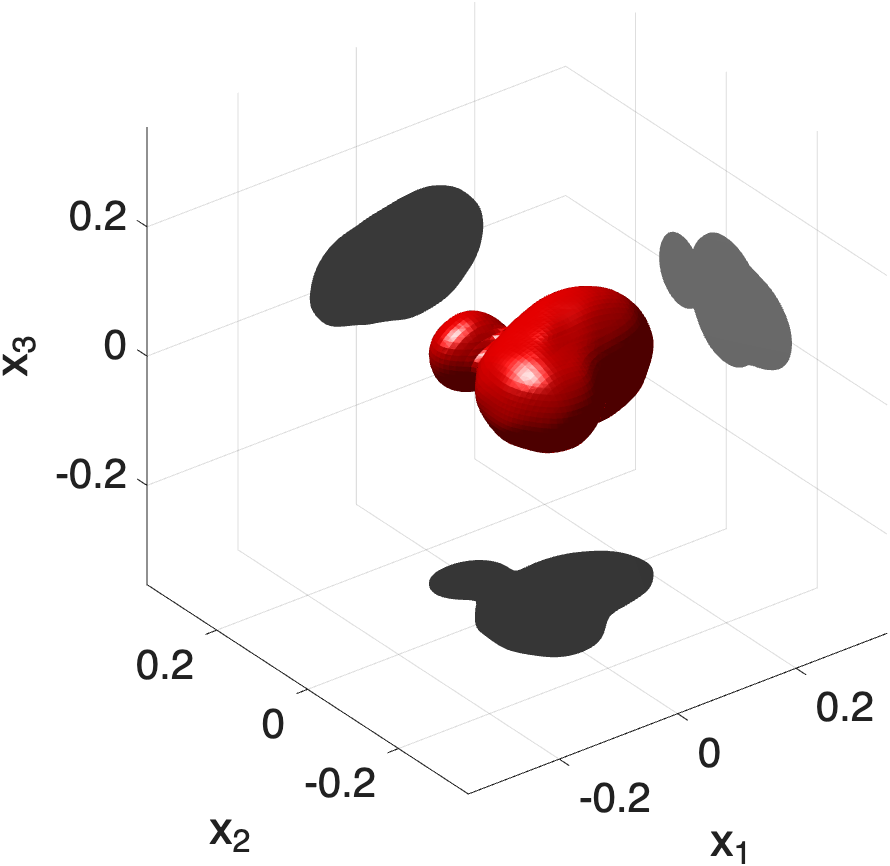}}
    \caption{Iso-surface plots with iso-surface value $C_{\rm iso}=5\times 10^{-3}$ ($C_s=10^{-2}$).}
    \label{fig: 3D_smooth_reconstruction_isosurface}
\end{figure}

Next we aim to investigate how the magnitude of the contrast function influences its identification. To this end, a new simulation is designed by simply increasing the value of the scaling factor from $C_s=10^{-2}$ to $C_s=10^{-1}$ and retaining the other experimental parameters. After recomputing the indicator function in this case, we obtain the reconstructions in Figure \ref{fig: 3D_smooth_reconstruction_slice_1e-1} and Figure \ref{fig: 3D_smooth_reconstruction_isosurface_5e-2}. Comparing the results due to $C_s=10^{-2}$ and $C_s=10^{-1}$, we find that the quality of reconstruction is, roughly speaking, inversely proportional to the magnitude of the contrast function. We also want to point out that, in our experiments this trend of correspondence can be also observed by further increasing the scaling factor to $C_s=1$ or decreasing it to $C_s=10^{-3}$.

\begin{figure}
    \centering
    \subfigure[$\Re({\mathbf I}^s)$ in $x_1=0$]{\includegraphics[width=0.3\linewidth]{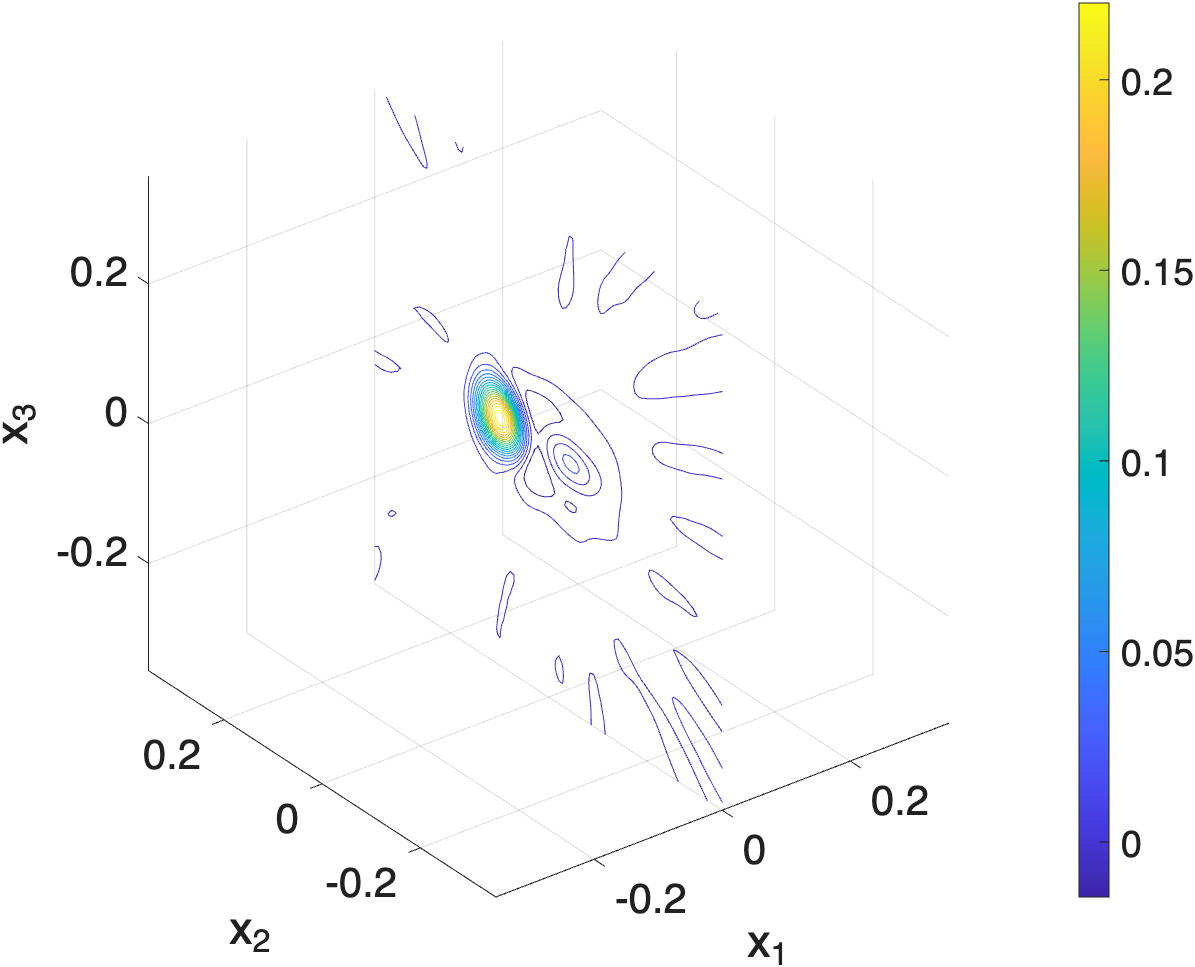}}\quad
    \subfigure[$\Re({\mathbf I}^s)$ in $x_2=0$]{\includegraphics[width=0.3\linewidth]{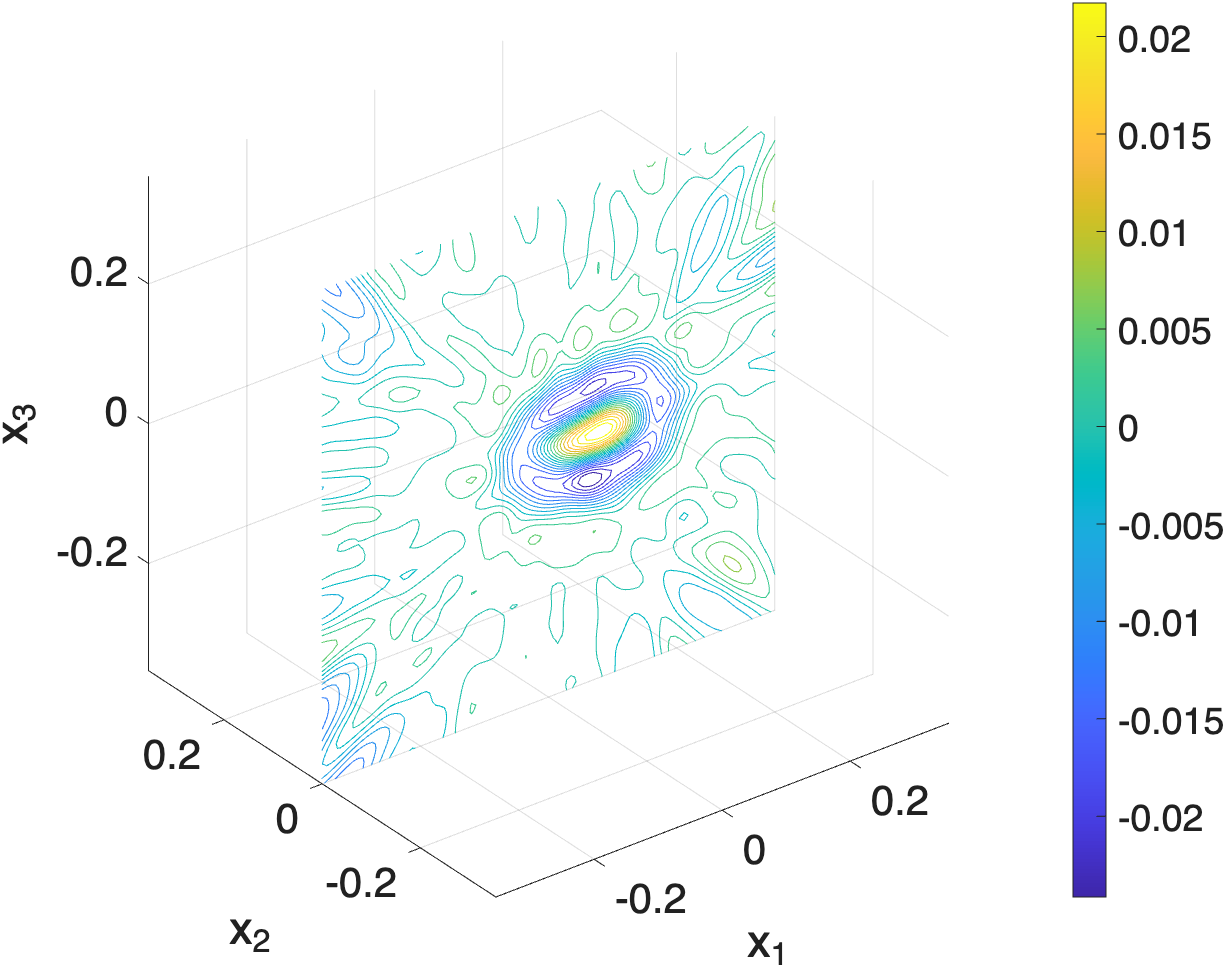}}\quad
    \subfigure[$\Re({\mathbf I}^s)$ in $x_3=0$]{\includegraphics[width=0.3\linewidth]{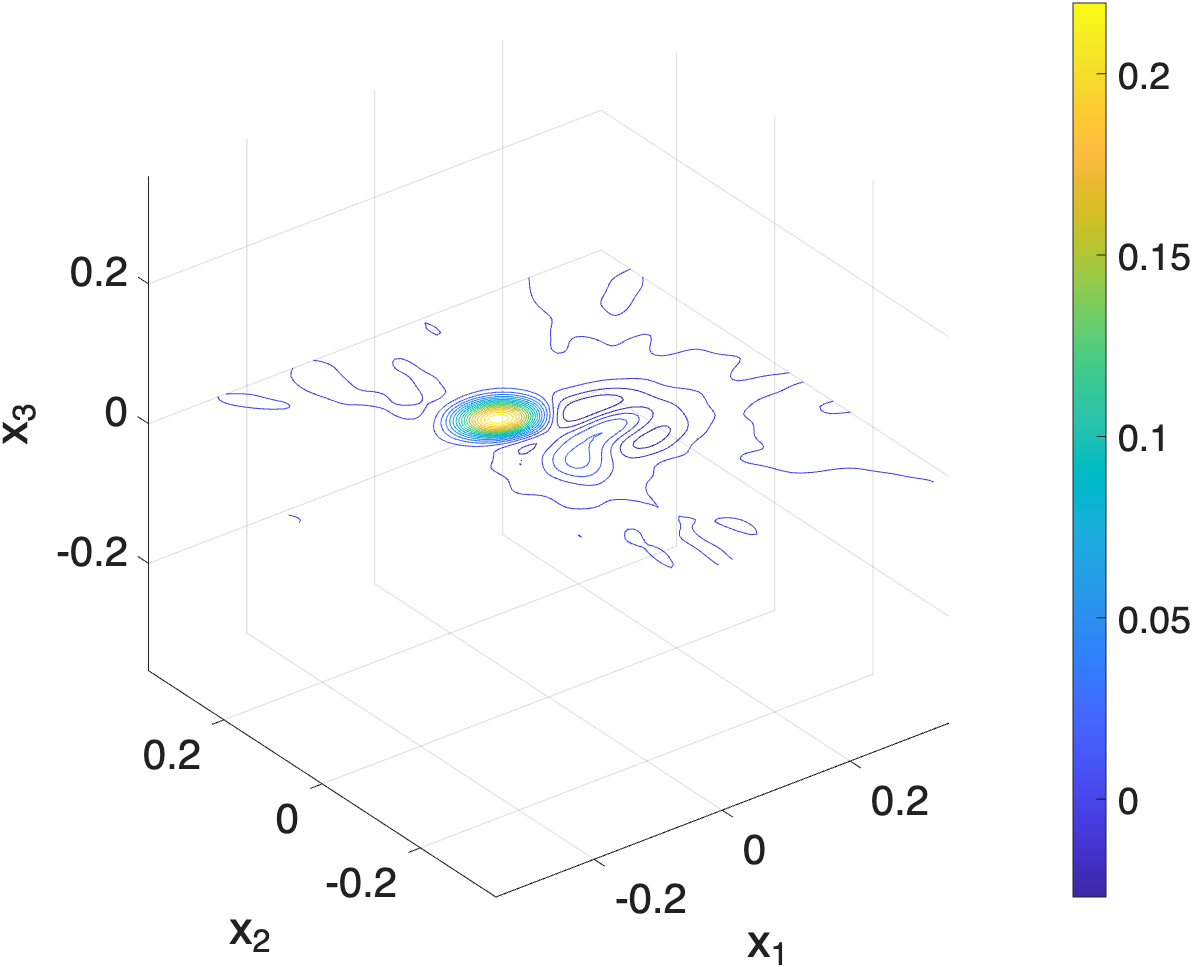}}\\
    \subfigure[$\Im({\mathbf I}^s)$ in $x_1=0$]{\includegraphics[width=0.3\linewidth]{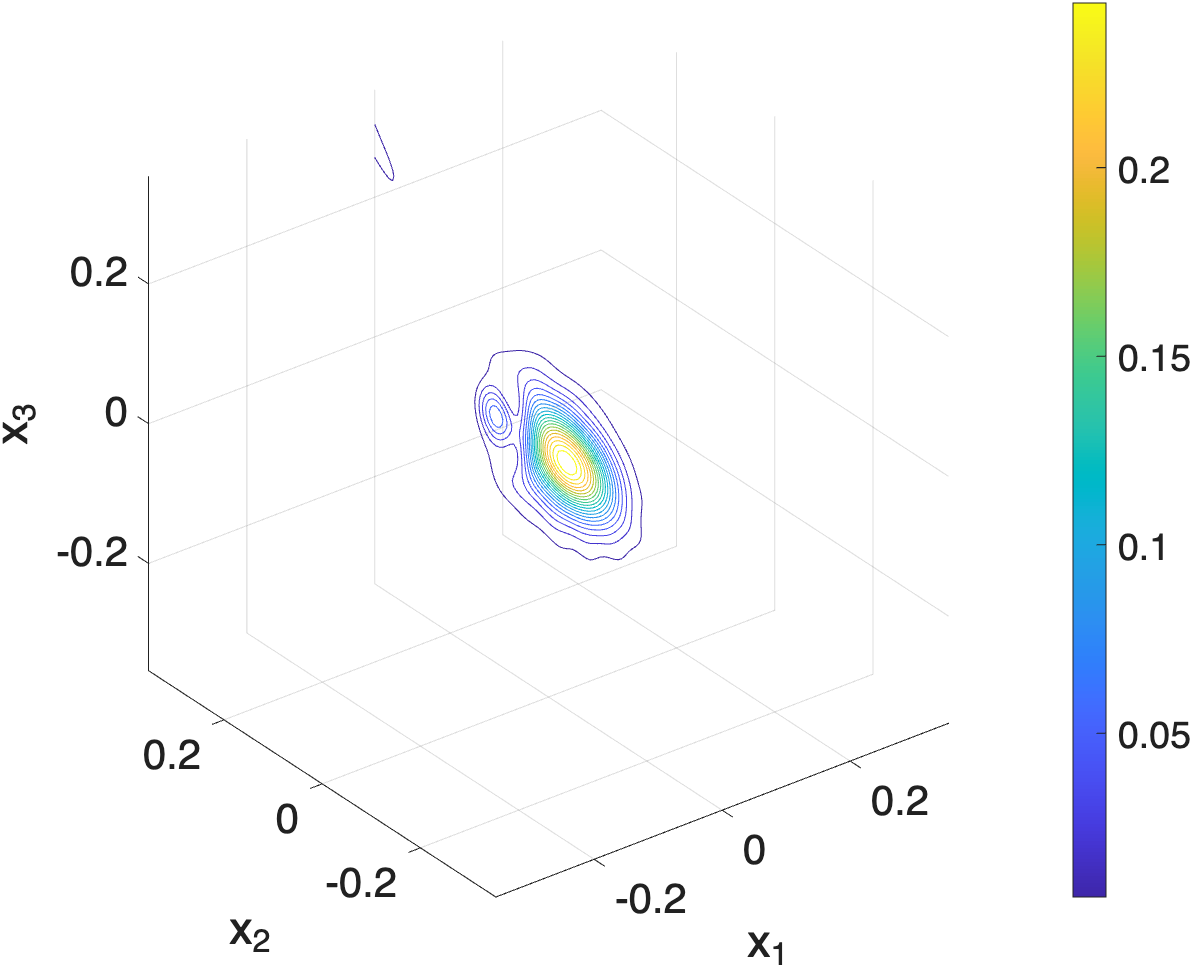}}\quad
    \subfigure[$\Im({\mathbf I}^s)$ in $x_2=0$]{\includegraphics[width=0.3\linewidth]{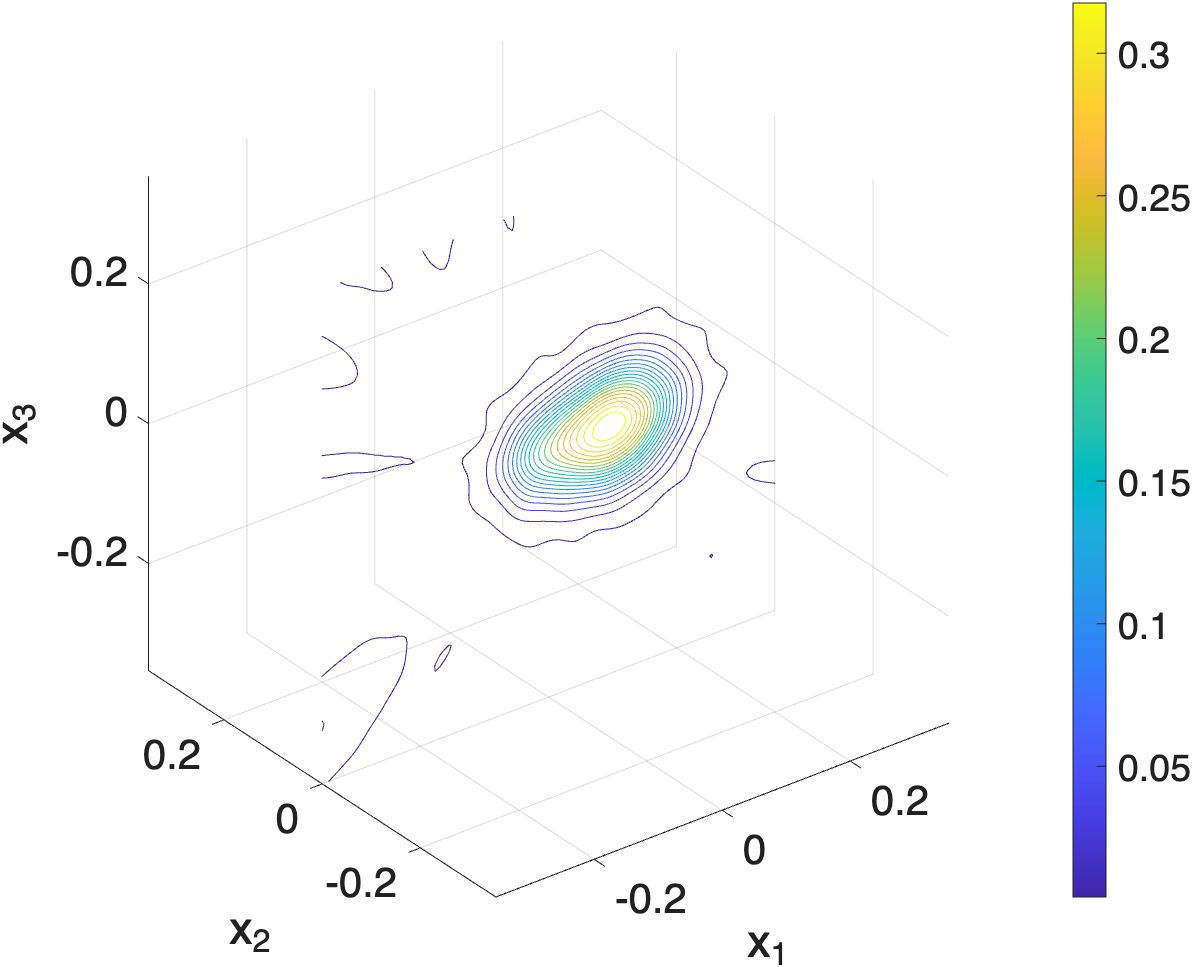}}\quad
    \subfigure[$\Im({\mathbf I}^s)$ in $x_3=0$]{\includegraphics[width=0.3\linewidth]{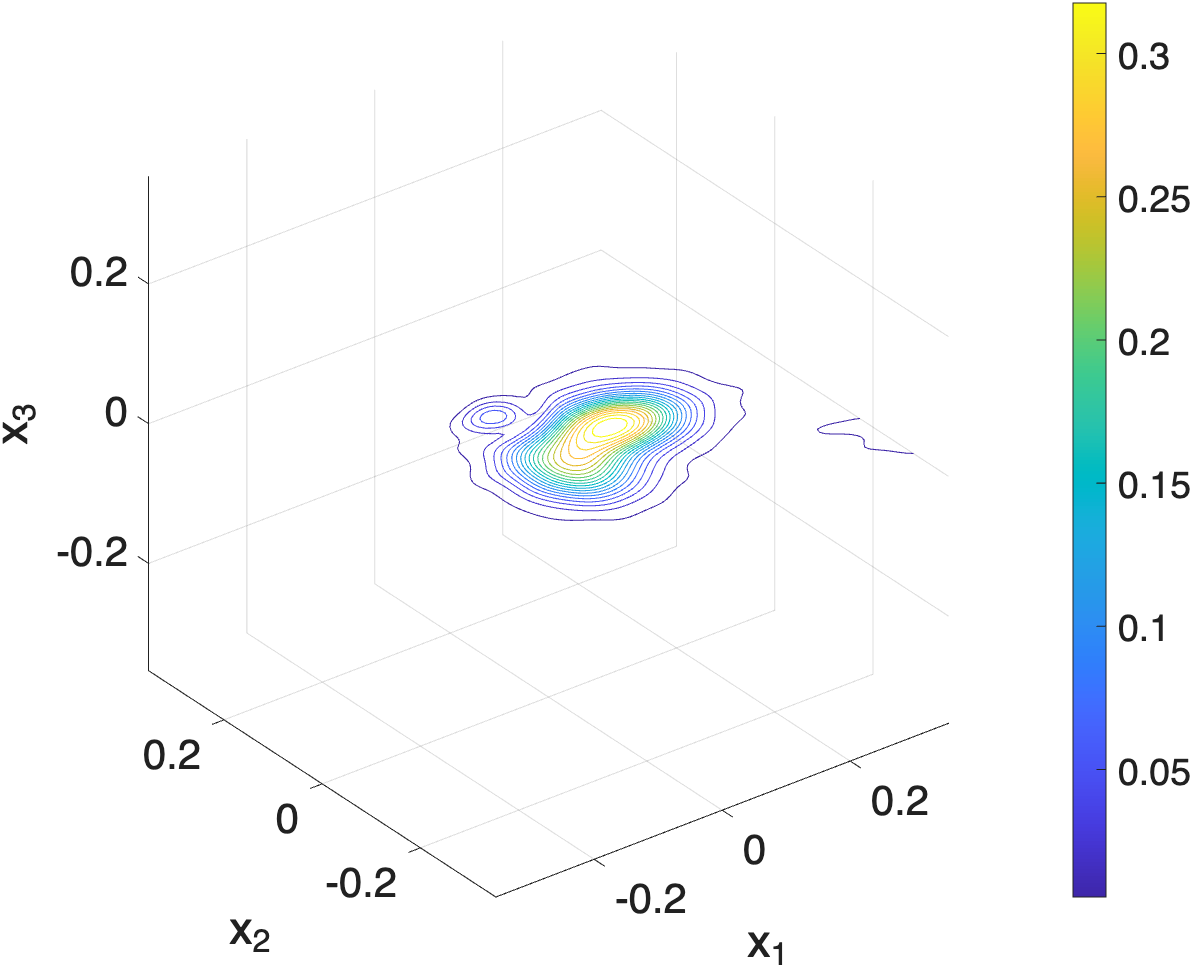}}
    \caption{Contour plots of ${\mathbf I}^s$ in slice planes ($C_s=10^{-1}$).}
    \label{fig: 3D_smooth_reconstruction_slice_1e-1}
\end{figure}

\begin{figure}
    \centering
    \subfigure[$\Re({\mathbf I}^s)$]{\includegraphics[width=0.3\linewidth]{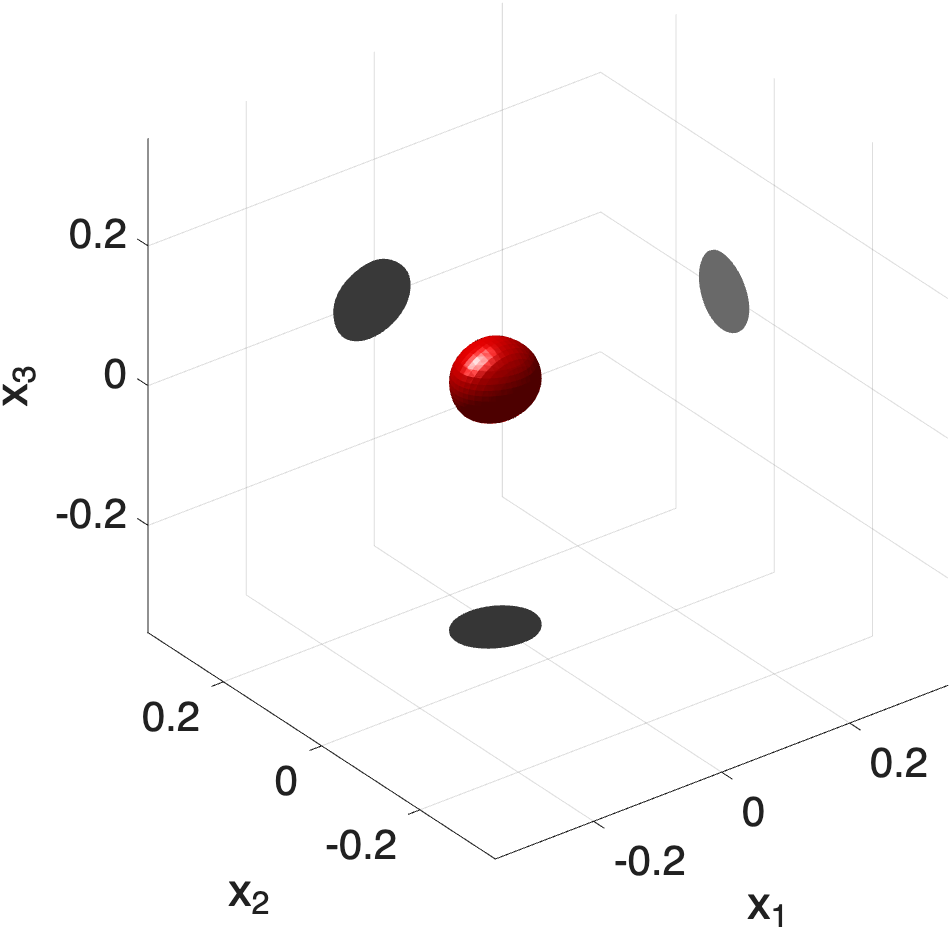}}\quad
    \subfigure[$\Im({\mathbf I}^s)$]{\includegraphics[width=0.3\linewidth]{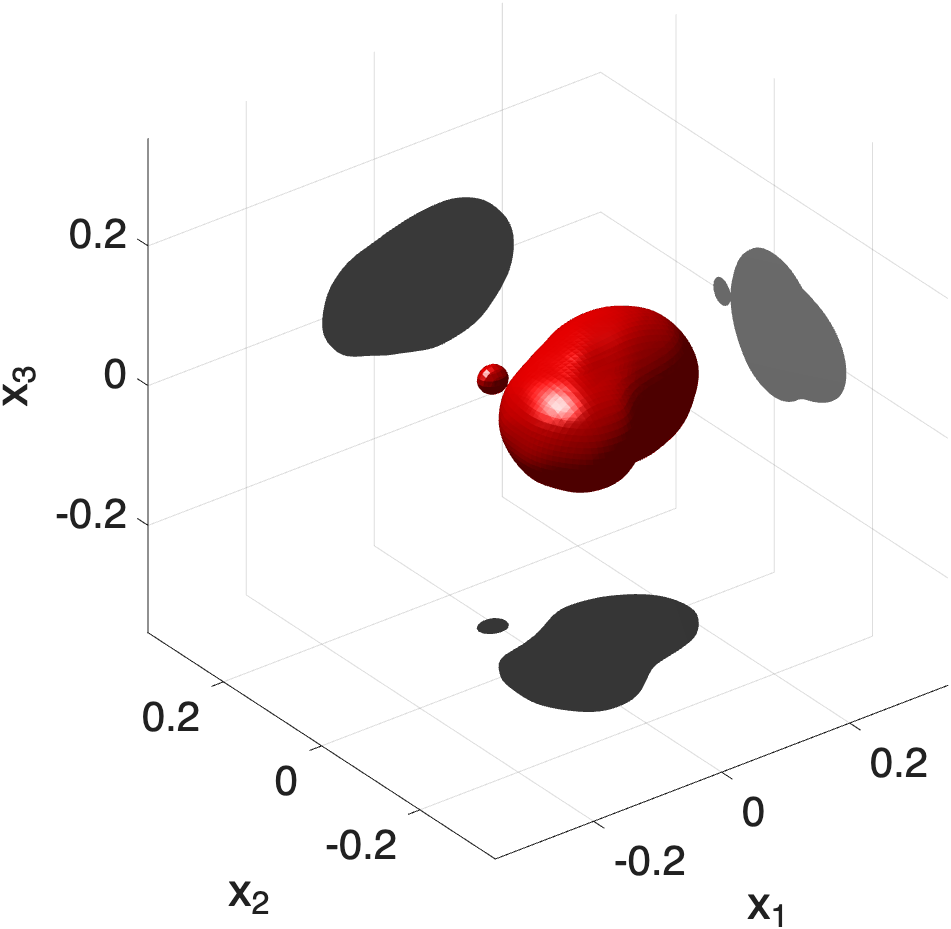}}\quad
    \subfigure[$|{\mathbf I}^s|$]{\includegraphics[width=0.3\linewidth]{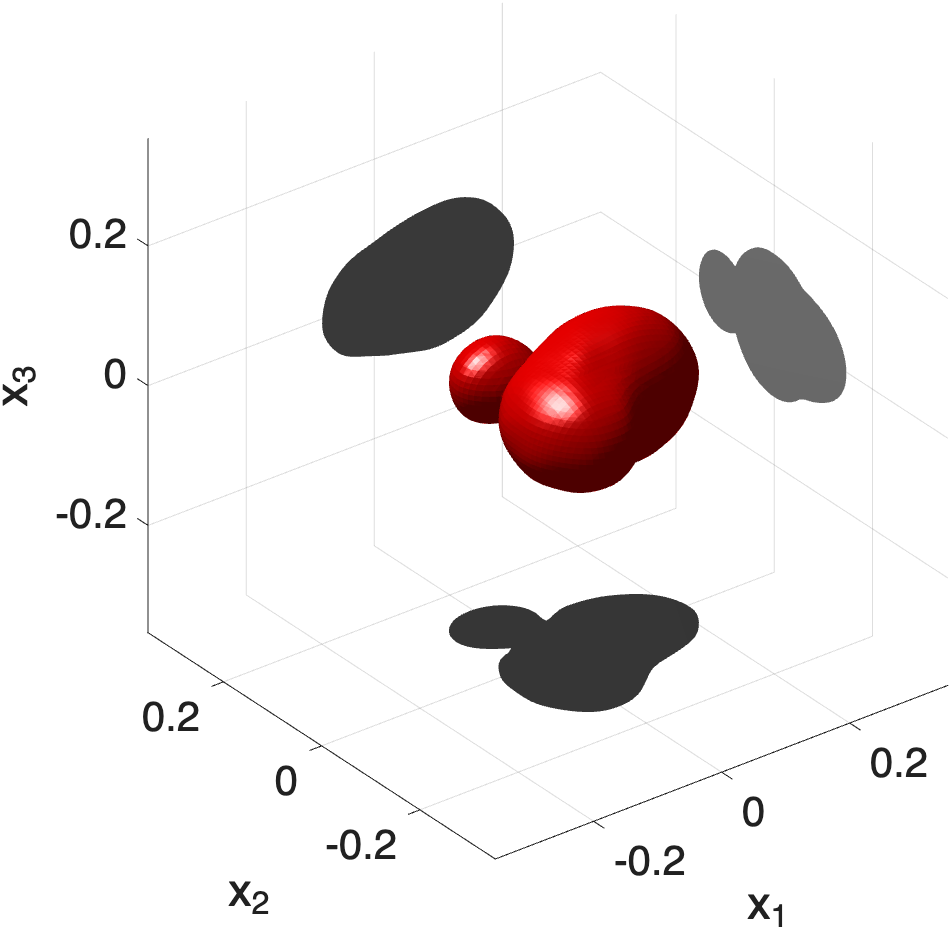}}
    \caption{Iso-surface plots with iso-surface value $C_{\rm iso}=5\times 10^{-2}$ ($C_s=10^{-1}$).}
    \label{fig: 3D_smooth_reconstruction_isosurface_5e-2}
\end{figure}
\end{example}

\begin{example}\label{ex: 3D-cross}
\begin{figure}
    \centering
    \includegraphics[width=0.3\linewidth]{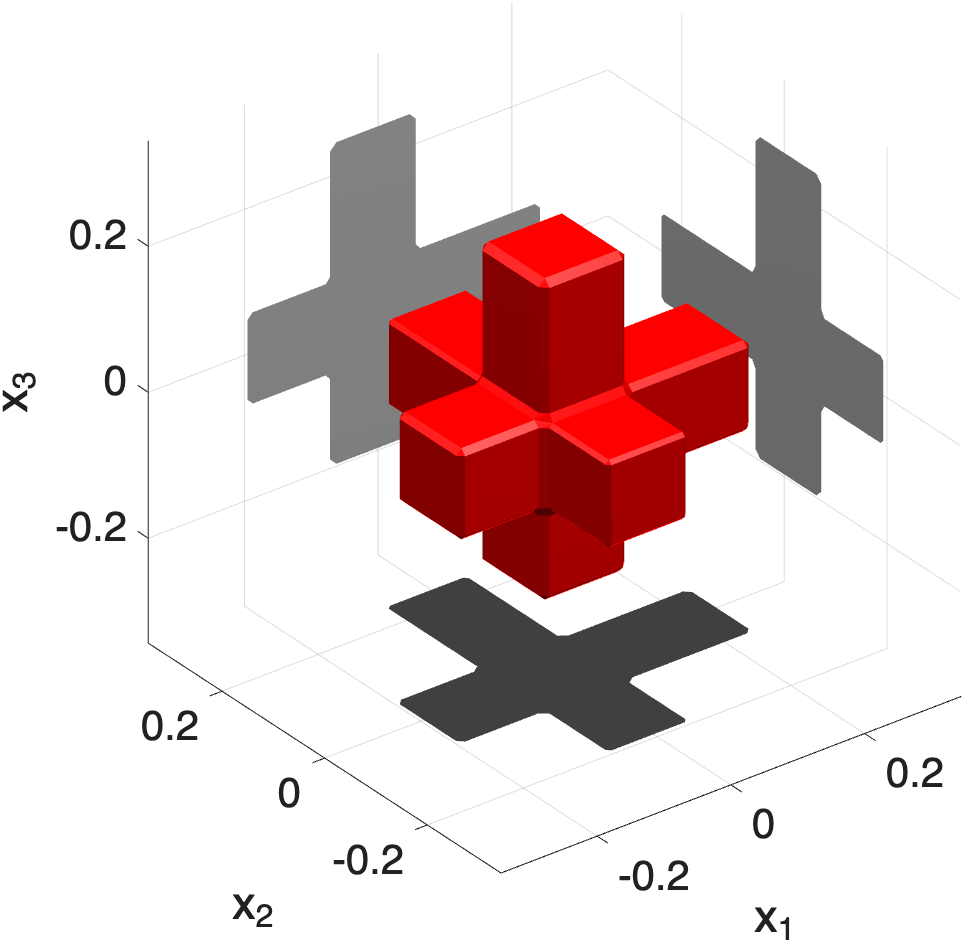}
    \caption{The ground-truth cross-shaped contrast.}
    \label{fig: 3D_cross_exact}
\end{figure}
This final example delves into the recovery of a cross-shaped contrast. The geometrical structure of the cross is composed of three perpendicular bars, $\Omega_1,\Omega_2,\Omega_3$, each with length 7/16 and width 1/8. We refer to Figure \ref{fig: 3D_cross_exact} for a view of the exact cross-shaped scatterer from several observational directions. Specifically, the first horizontal bar $\Omega_1$, the second horizontal bar $\Omega_2$, and the vertical bar $\Omega_3$, are defined as follows. 
\begin{align*}
\Omega_1 & :=\left\{x=(x_1, x_2, x_3)\in\mathbb{R}^3: -\frac 3 {16}\leq x_1\leq \frac14,\ -\frac 1 {16}\leq x_2\leq \frac 1 {16},\ -\frac 1 {16}\leq x_3\leq \frac 1 {16}\right\},\\
\Omega_2 & :=\left\{x=(x_1, x_2, x_3)\in\mathbb{R}^3: -\frac 1 {16}\leq x_1\leq \frac 1 {16},\ -\frac 3 {16}\leq x_2\leq \frac14,\ -\frac 1 {16}\leq x_3\leq \frac 1 {16}\right\},\\
\Omega_3 & :=\left\{x=(x_1, x_2, x_3)\in\mathbb{R}^3: -\frac 1 {16}\leq x_1\leq \frac 1 {16},\ -\frac 1 {16}\leq x_2\leq \frac 1 {16},\ -\frac 3 {16}\leq x_3\leq \frac14\right\}.
\end{align*}
Obviously, their intersectional center block is the cube
$$
\Omega_1\cap\Omega_2=\left\{x=(x_1, x_2, x_3)\in\mathbb{R}^3:  -\frac 1 {16}\leq x_j\leq \frac 1 {16},\ j=1,2,3\right\}.
$$
The ground-truth contrast of the cross medium is given by the block-wise constant function
$$
q:=8\times 10^{-3}\chi_{\Omega_1\setminus\Omega_2}+6\times 10^{-3}\chi_{\Omega_2\setminus\Omega_1}+1\times 10^{-2}\chi_{\Omega_3},
$$
where
$$
\chi_{\Omega}(x)=
\begin{cases}
    1, & x\in\Omega,\\
    0, & x\notin\Omega,
\end{cases}
$$
denotes the characteristic function of a generic domain $\Omega$.

For the reconstruction, we use the far-field data observed at $N_\theta=256$ directions and $\Delta k=2$, $k_{\rm min}=1$, $k_{\rm max}=81$, with $\delta=1\%$ noise level. The sampling region is chosen as $[-0.35, 0.35]^3$ with a $101^3$ uniformly spaced sampling grid. 
Figure \ref{fig: 3D_Cross_farField_slice} shows the reconstructions of the cross-shaped contrast in the form of contour slices. It can be observed from Figure \ref{fig: 3D_Cross_farField_slice} that both the support of contrast profile and its contrast value could be recovered.

\begin{figure}
    \centering
    \subfigure[$x_1=0$]{\includegraphics[width=0.3\linewidth]{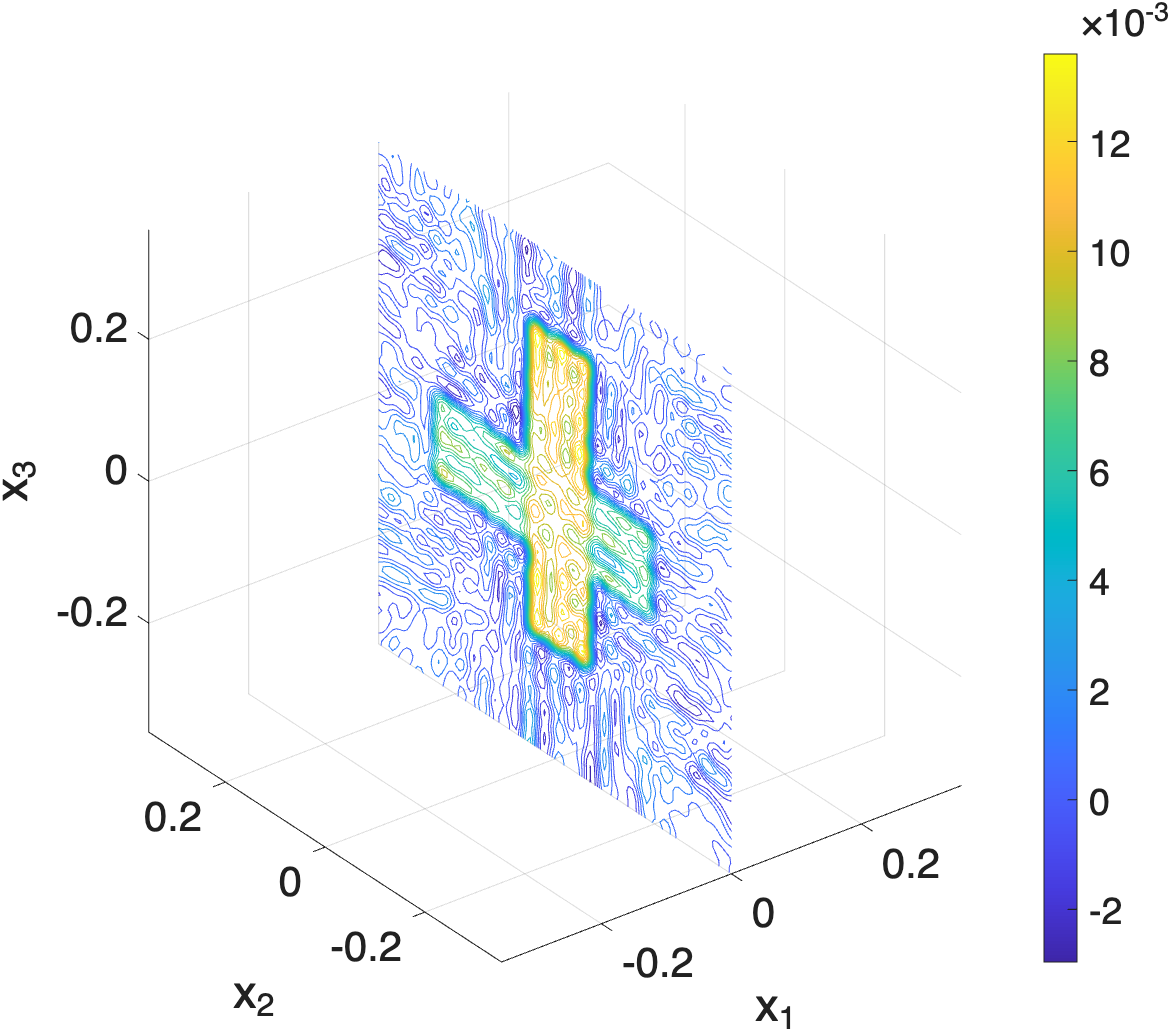}}\quad
    \subfigure[$x_2=0$]{\includegraphics[width=0.3\linewidth]{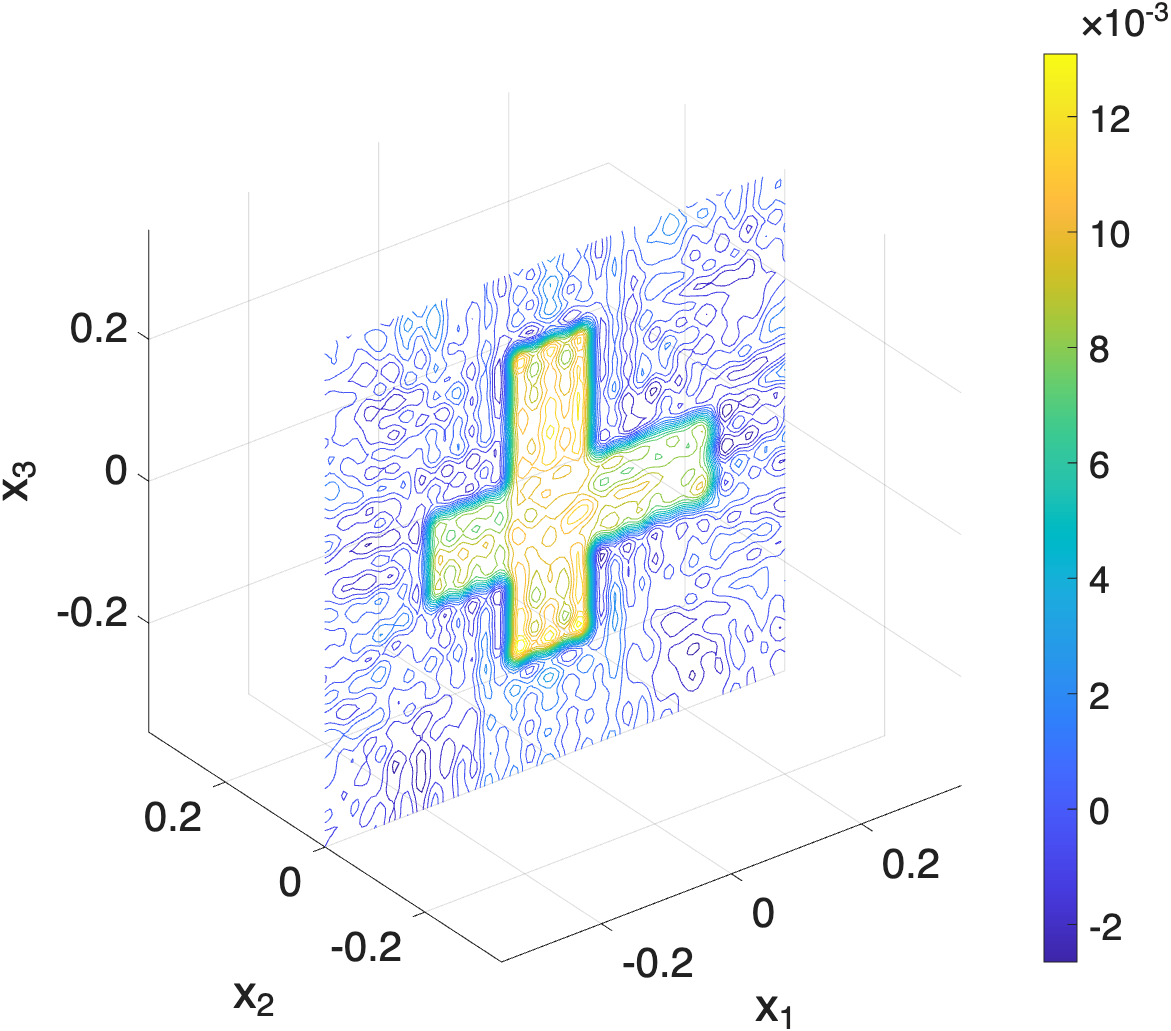}}\quad
    \subfigure[$x_3=0$]{\includegraphics[width=0.3\linewidth]{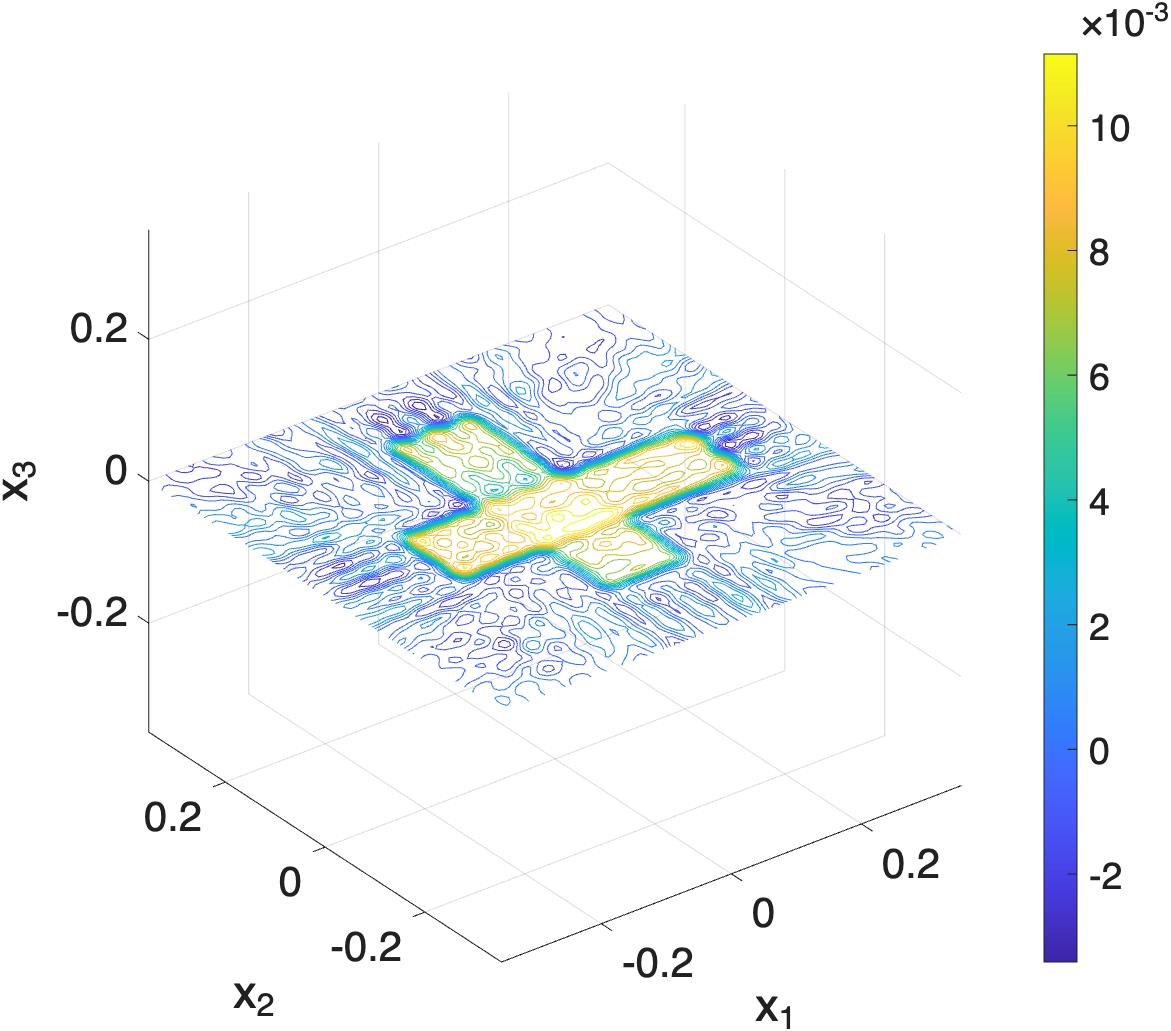}}
    \caption{Contour plots of $\Re({\mathbf I}^{\infty})$ at different slice positions.}
    \label{fig: 3D_Cross_farField_slice}
\end{figure}

To further shed light on the inversion results, the iso-surface version of reconstructions is exhibited in Figure \ref{fig: 3D_Cross_farField_isosurface}. Here, we plot the iso-surfaces of the imaging results by extracting iso-surface value data (with equal value $C_{\rm iso}$) from the volume indicator $\Re({\mathbf I}^{\infty})$. Note that when the iso-surface value equals the exact contrast of the vertical bar (that is, $C_{\rm iso}=q|_{\Omega_3}=10^{-2}$), only the vertical bar $\Omega_3$ can be well identified whereas the horizontal bars are almost invisible, see Figure  \ref{fig: 3D_Cross_farField_isosurface}(a). If we modify the iso-surface value $C_{\rm iso}$ to $q|_{\Omega_1\setminus\Omega_2}$ and $q|_{\Omega_2\setminus\Omega_1}$ respectively, then the two horizontal bars consequently and successively emerge, see Figure \ref{fig: 3D_Cross_farField_isosurface}(b)(c).

\begin{figure}
    \centering
    \subfigure[$C_{\rm iso}=10^{-2}$]{\includegraphics[width=0.3\linewidth]{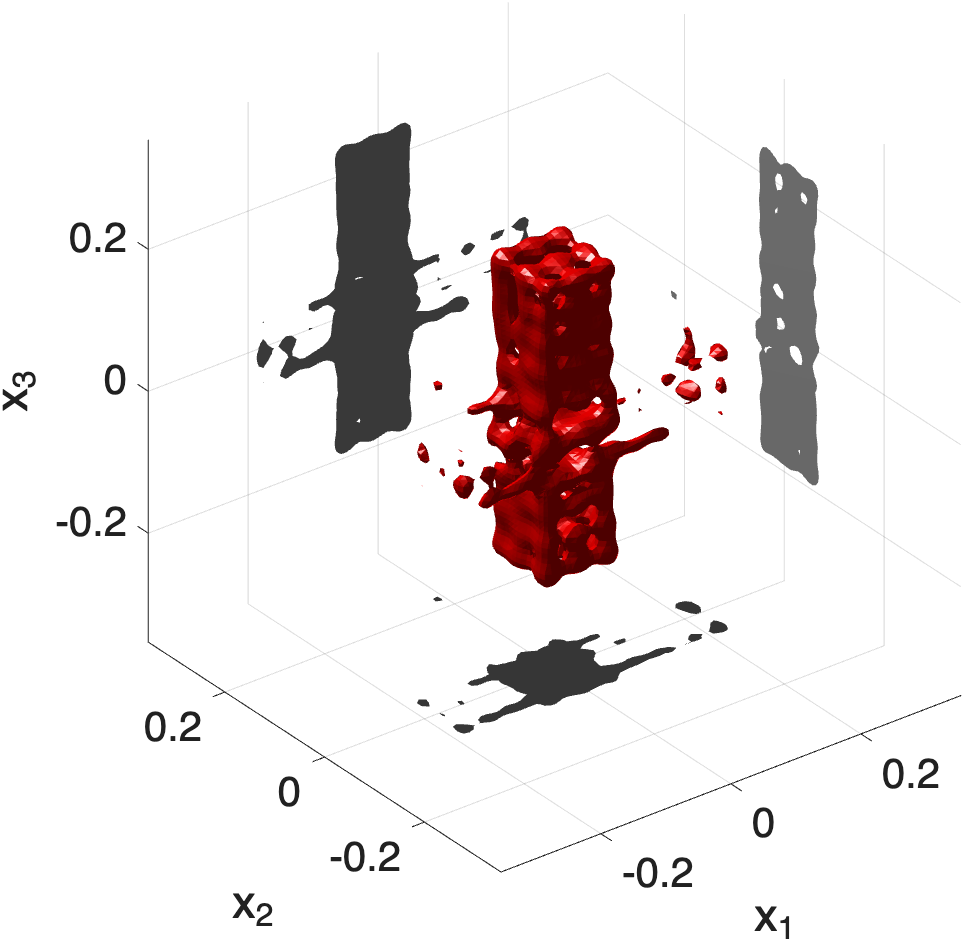}}\quad
    \subfigure[$C_\mathrm{iso}=8\times 10^{-3}$]{\includegraphics[width=0.3\linewidth]{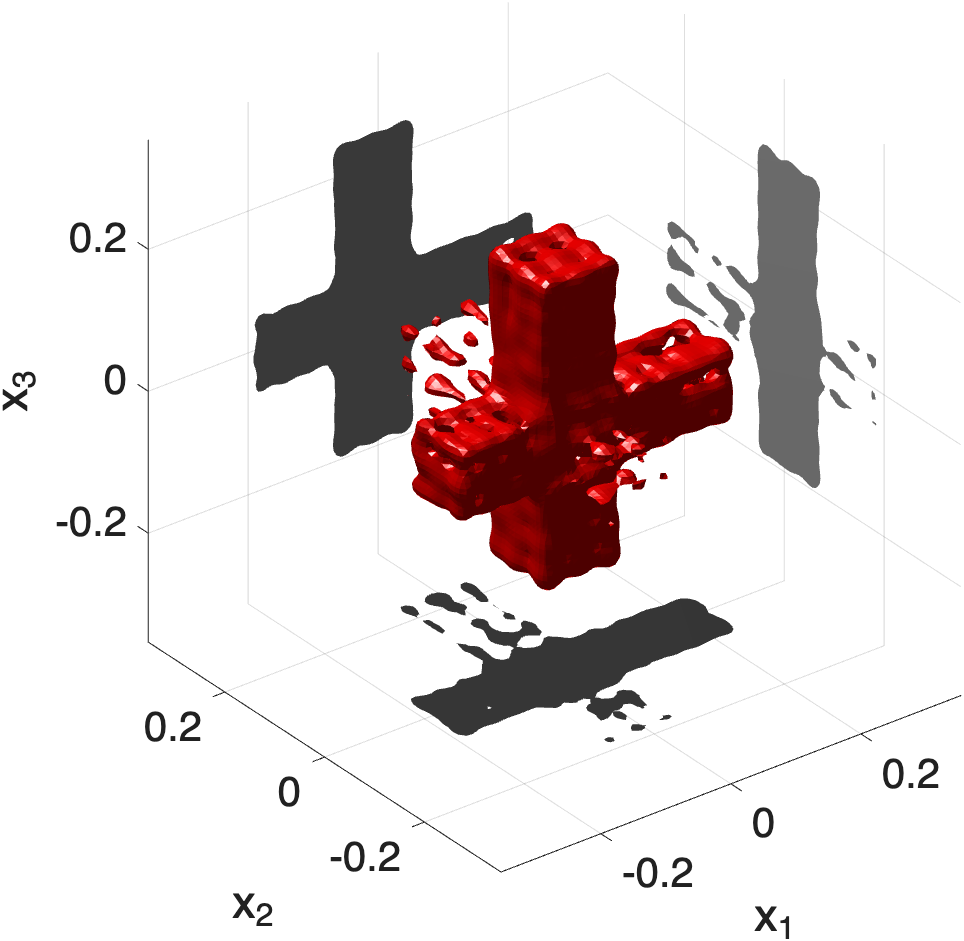}}\quad
    \subfigure[$C_\mathrm{iso}=6\times 10^{-3}$]{\includegraphics[width=0.3\linewidth]{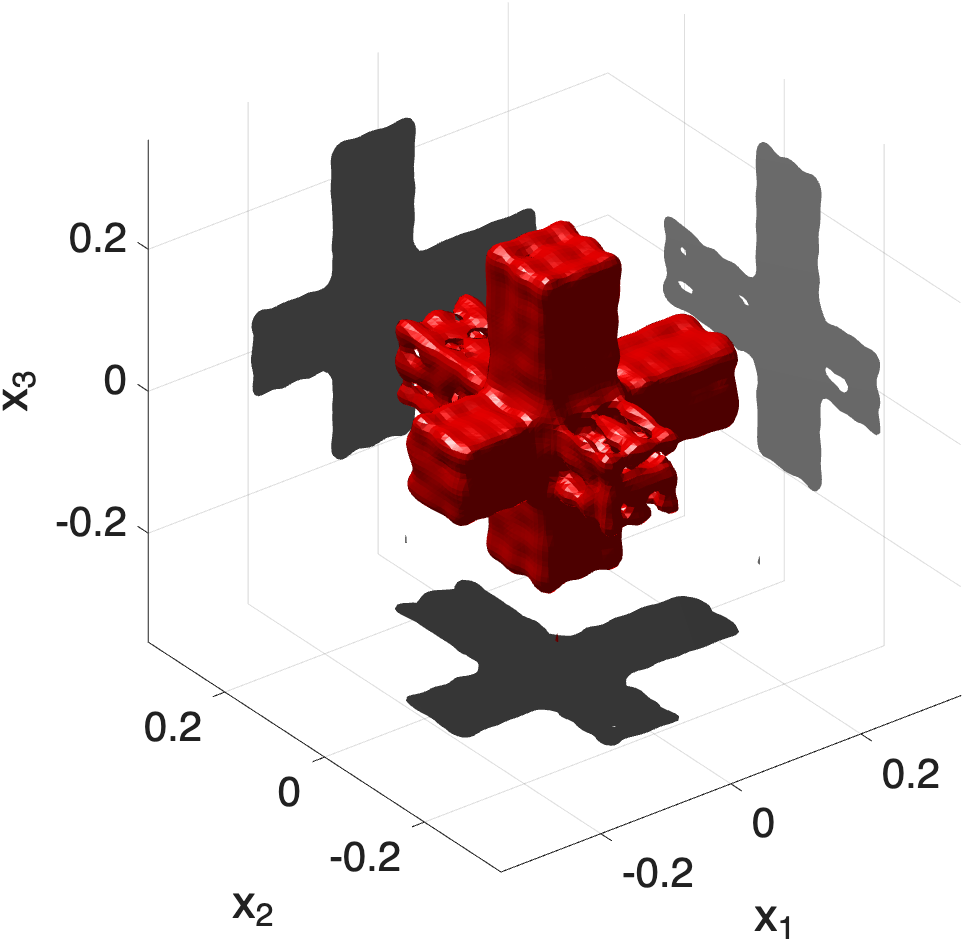}}
    \caption{Iso-surface plots of $\Re({\mathbf I}^{\infty})$ with iso-surface value $C_{\rm iso}$.}
    \label{fig: 3D_Cross_farField_isosurface}
\end{figure}

Finally we remove the center block $\Omega_1\cap\Omega_2$ from the aforementioned solid cross and test the algorithm's capability of recovering the hollow-center cross. Now the ground-truth contrast of the cross medium is given by
$$
q:=8\times 10^{-3}\chi_{\Omega_1\setminus\Omega_2}+6\times 10^{-3}\chi_{\Omega_2\setminus\Omega_1}+1\times 10^{-2}\chi_{\Omega_3\setminus(\Omega_1\cap\Omega_2)},
$$
Note that the hollow-center cross consists of six adjacent but disconnected blocks with an exterior surface almost identical to the previous solid cross, hence the geometrical structure of this hollow-center cross is more complicated than the original filled-center one. The corresponding reconstructions are shown in Figure \ref{fig: 3D_cross_hollow_slice} and Figure \ref{fig: 3D_cross_hollow_isosurface}.

\begin{figure}
    \centering
    \subfigure[$x_1=0$]{\includegraphics[width=0.3\linewidth]{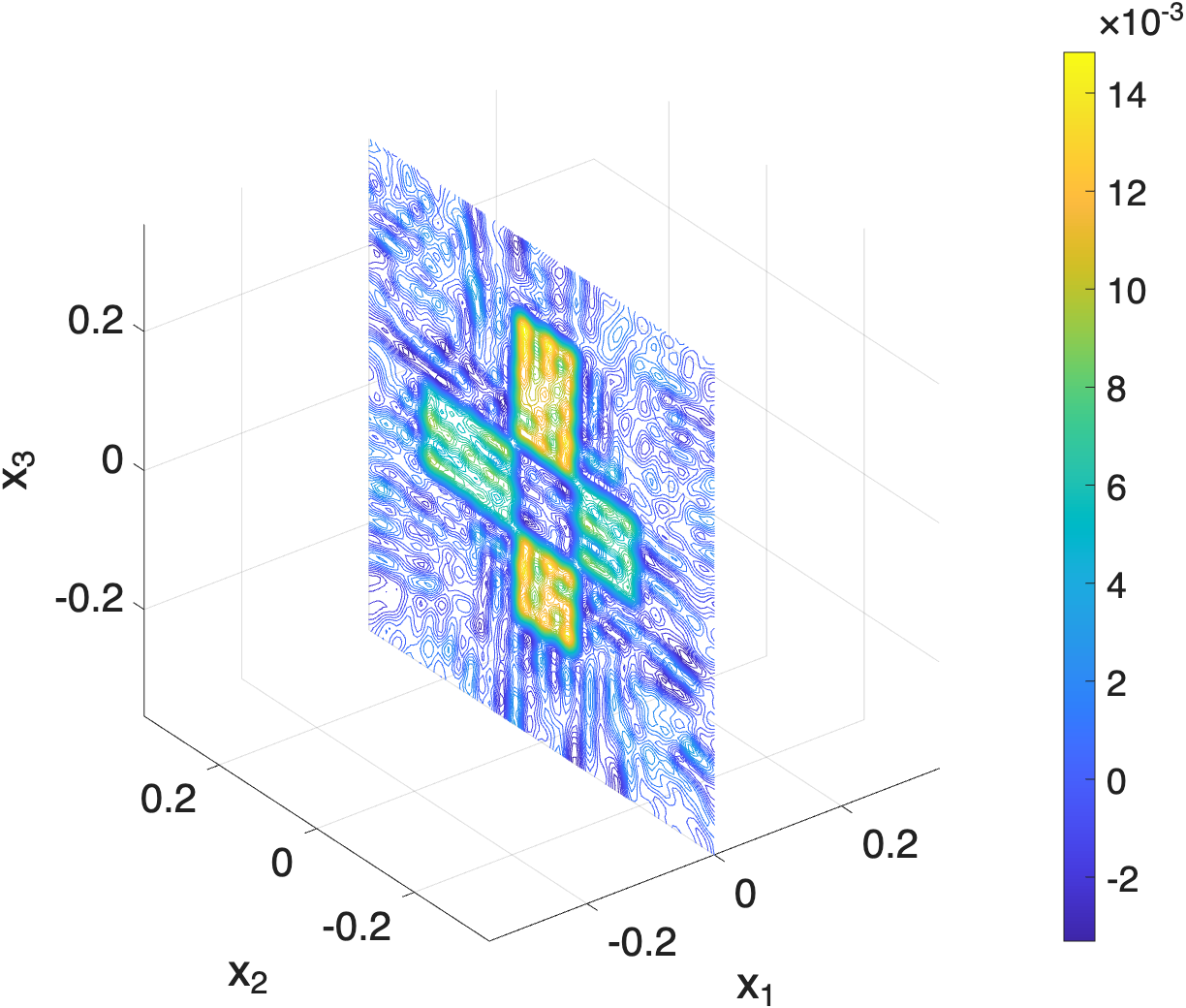}}\quad
    \subfigure[$x_2=0$]{\includegraphics[width=0.3\linewidth]{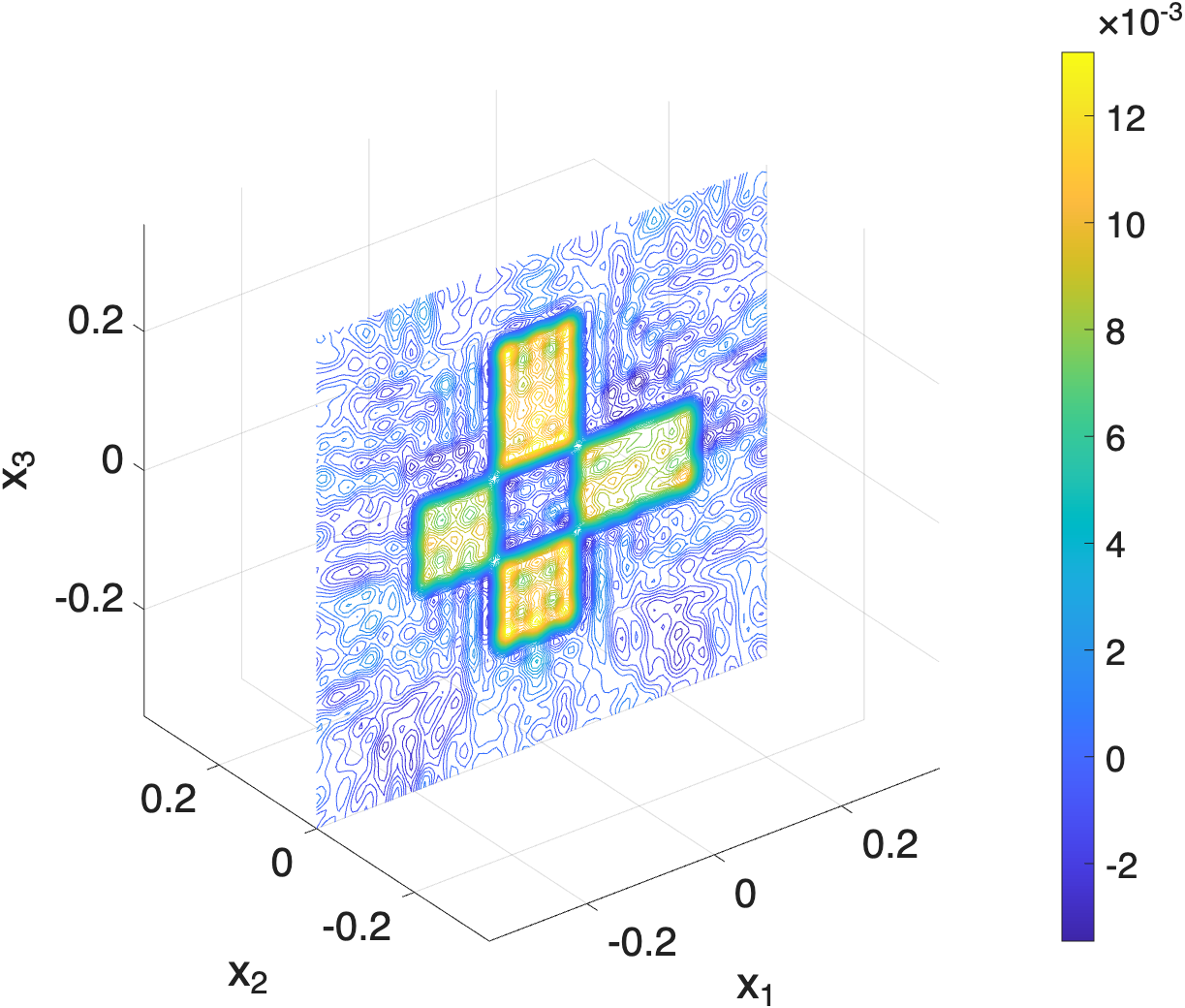}}\quad
    \subfigure[$x_3=0$]{\includegraphics[width=0.3\linewidth]{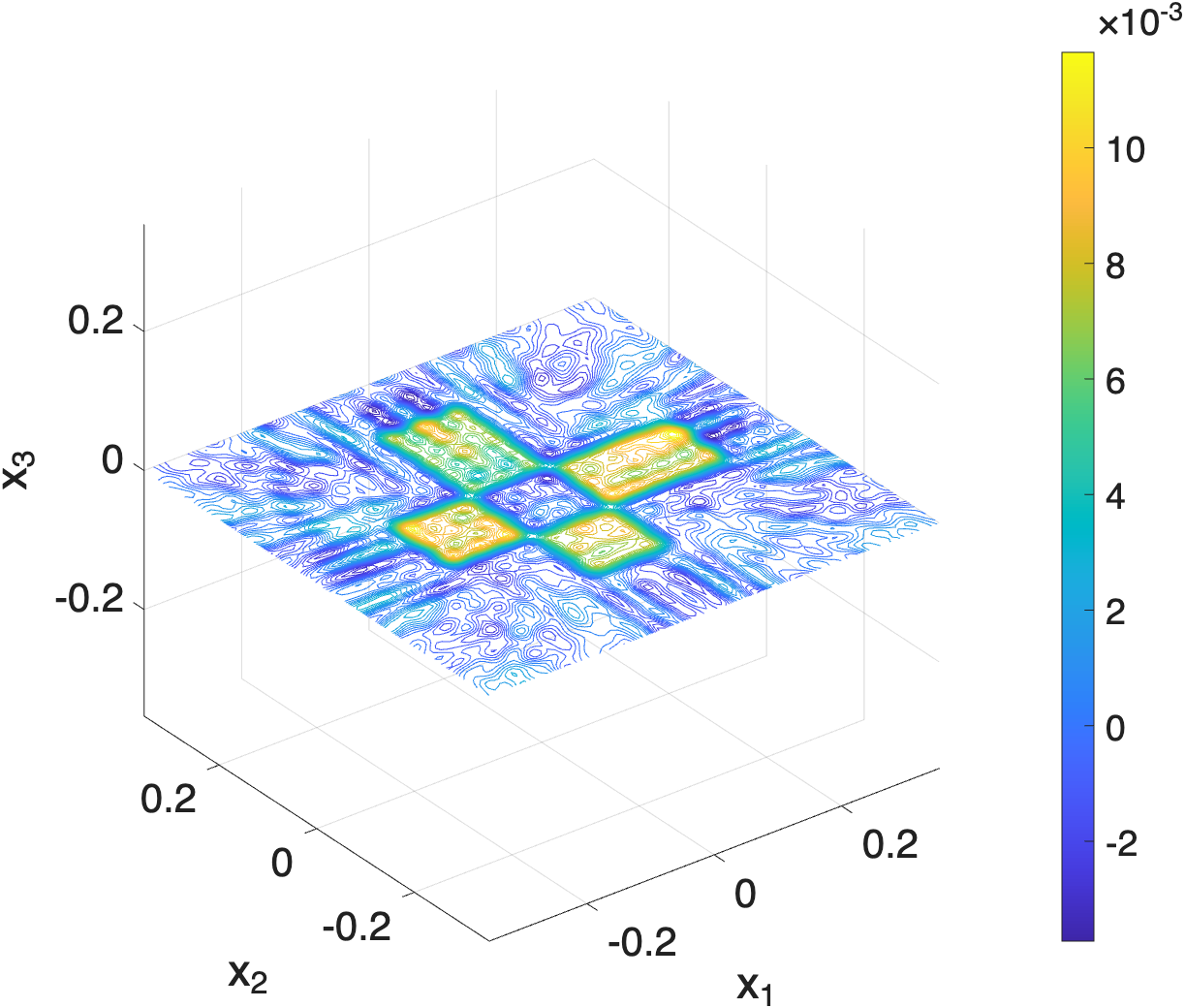}}
    \caption{Contour plots of $\Re({\mathbf I}^{\infty})$ for imaging the hollow-center cross at different slice positions.}
    \label{fig: 3D_cross_hollow_slice}
\end{figure}
\begin{figure}
    \centering
    \subfigure[$C_{\rm iso}=10^{-2}$]{\includegraphics[width=0.3\linewidth]{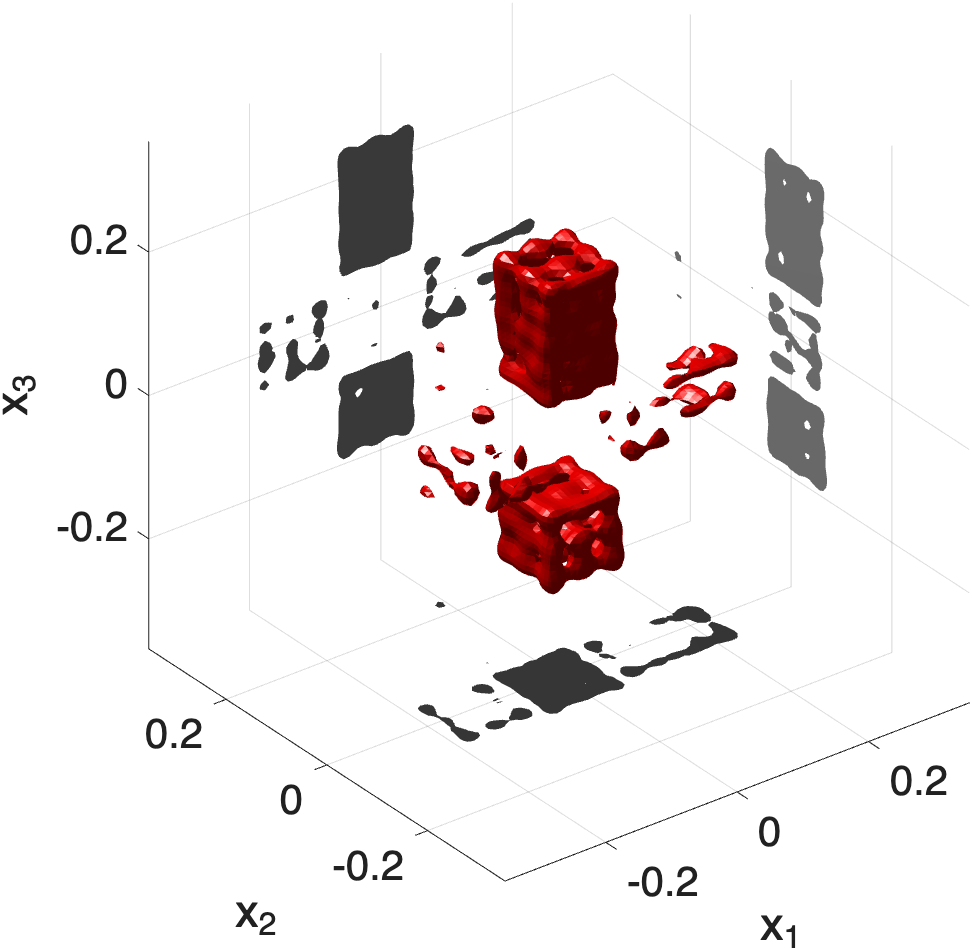}}\quad
    \subfigure[$C_\mathrm{iso}=8\times 10^{-3}$]{\includegraphics[width=0.3\linewidth]{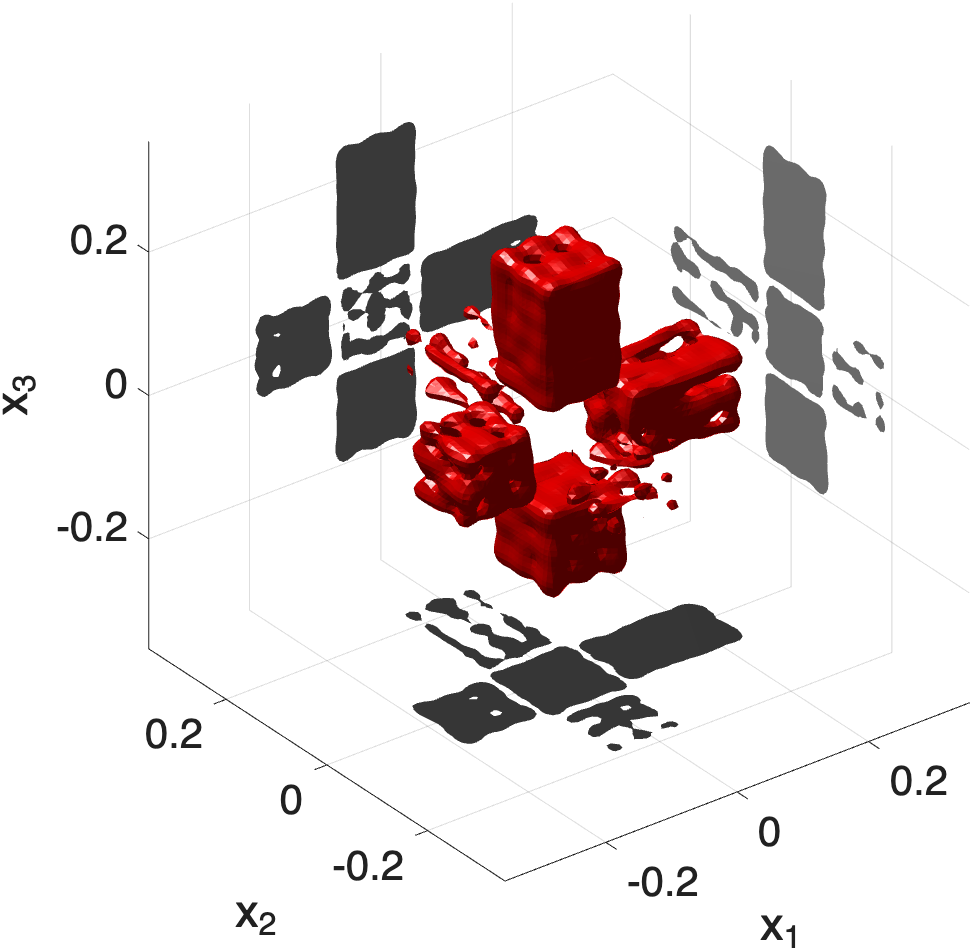}}\quad
    \subfigure[$C_\mathrm{iso}=6\times 10^{-3}$]{\includegraphics[width=0.3\linewidth]{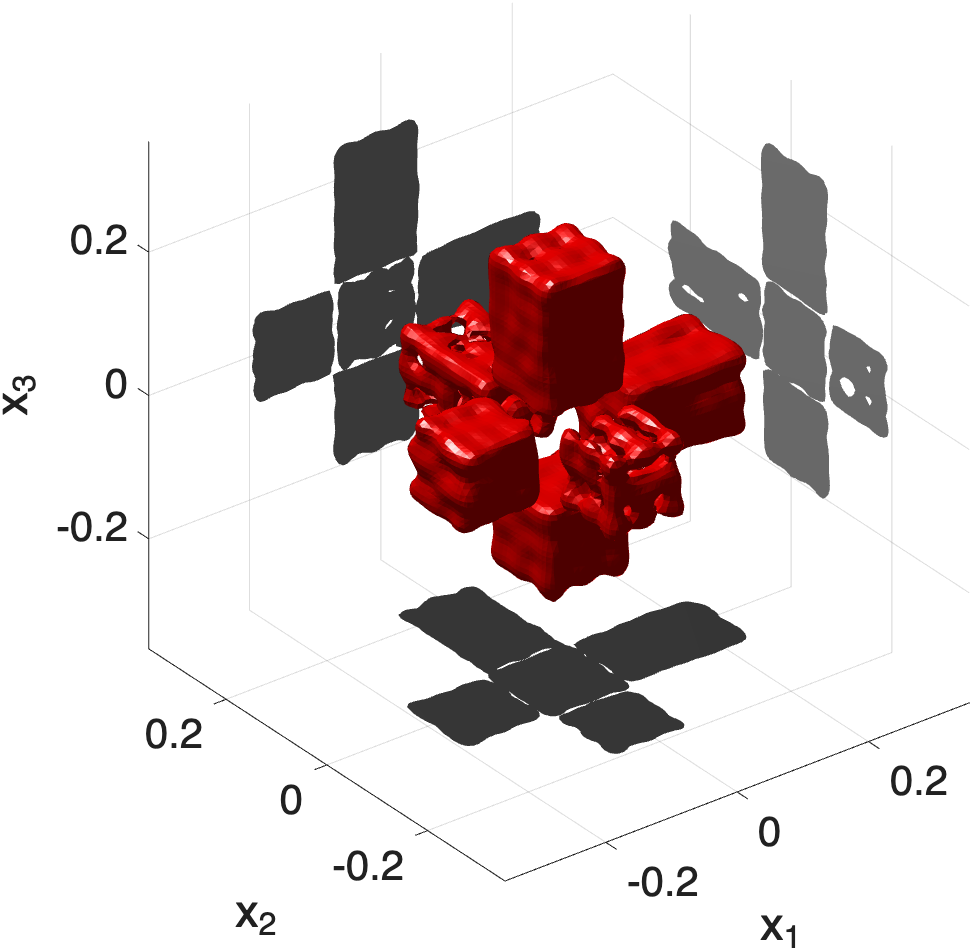}}
    \caption{Iso-surface plots of $\Re({\mathbf I}^{\infty})$ with iso-surface value $C_{\rm iso}$ for the hollow-center cross.}
    \label{fig: 3D_cross_hollow_isosurface}
\end{figure}
\end{example}

\section{Conclusion}
This paper investigates the quantitative reconstruction of inhomogeneous media from multi-frequency backscattering data, a long-standing open problem in inverse medium scattering. We prove a local uniqueness result for the contrast function under the assumption that the difference of two contrasts is at most a first-order polynomial, providing a theoretical guarantee for reconstruction. By employing the Born approximation and Fourier transform, we derive concise representations of backscattering scattered fields and far-field patterns, and propose a QDSM with two explicit indicators. The proposed method is non-iterative, initial-guess-free, and avoids inverse crime inherently. Numerical results in 2D and 3D show that QDSM can accurately and robustly recover the contrast functions and is insensitive to moderate noise. 

Future work will focus on relaxing the limitation of the Born approximation to treat high-contrast media, where the total field can no longer be simply approximated by the incident field. We will explore combining QDSM with the distorted Born approximation, regularized iterative corrections, or modern machine learning and Bayesian techniques to extend its applicability to high-contrast inhomogeneities. Further efforts will be made to extend the local uniqueness result to general contrast functions, develop adaptive frequency selection strategies for faster imaging, and extend the method to elastic and electromagnetic wave scattering.

\section*{Acknowledgment}
The research of X. Liu is supported by the National Key R\&D Program of China under grant 2024YFA1012303 and the NNSF of China under grant 12371430. The research of Y. Guo is supported by the NNSF of China under grants 11971133 and 12571452.


\bibliographystyle{plain}
\bibliography{myreferences}

@book{colton2019inverse,
  title     = {Inverse Acoustic and Electromagnetic Scattering Theory},
  author    = {Colton, D. and Kress, R.},
  edition   = {4},
  publisher = {Springer},
  year      = {2019}
}

@article{Chen2010SOMInhom,
  author  = {Chen, X.},
  title   = {Subspace-based optimization method for inverse scattering problems with an inhomogeneous background medium},
  journal = {Inverse Problems},
  year    = {2010},
  volume  = {26},
  pages   = {074007}
}

@article{kirsch2017remarks,
  title={Remarks on the {Born} approximation and the Factorization Method},
  author={Kirsch, A.},
  journal={Applicable Analysis},
  volume={96},
  number={1},
  pages={70--84},
  year={2017}
}

@article{vandenberg1997contrast,
  title   = {A contrast source inversion method},
  author  = {van den Berg, P. M. and Kleinman, R. E.},
  journal = {Inverse Problems},
  volume  = {13},
  number  = {6},
  pages   = {1607--1624},
  year    = {1997}
}

@article{hanke1999iterative,
  title   = {Iterative methods for nonlinear ill-posed problems: convergence and regularization},
  author  = {Hanke, M. and Scherzer, O.},
  journal = {Inverse Problems},
  volume  = {15},
  number  = {3},
  pages   = {R69--R106},
  year    = {1999}
}

@article{colton1996simple,
  author = {Colton, D. and Kirsch, A.},
  title = {A simple method for solving inverse scattering problems in the resonance region},
  journal = {Inverse Problems},
  volume = {12},
  pages = {383--393},
  year = {1996}
}

@article{wang1998stability,
  author = {Wang, J-N},
  title = {Stability estimate for an inverse acoustic backscattering problem},
  journal = {Inverse Problems},
  volume = {14},
  pages = {197--207},
  year = {1998}
}

@article{kirsch1999factorization,
  author = {Kirsch, A.},
  title = {Factorization of the far field operator for the inhomogeneous medium case and an application to inverse scattering theory},
  journal = {Inverse Problems},
  volume = {15},
  pages = {413--429},
  year = {1999}
}

@article{BaoTriki,
  title   = {Error estimates for the recursive linearization of inverse medium problems},
  author  = {Bao, G. and Triki, F.},
  journal = {Journal of Computational Mathematics},
  volume  = {28},
  pages   = {725--744},
  year    = {2010}
}

@article{meng2024kernel,
  title={A kernel machine learning for inverse source and scattering problems},
  author={Meng, S. and Zhang, B.},
  journal={SIAM Journal on Numerical Analysis},
  volume={62},
  number={3},
  pages={1443--1464},
  year={2024},
  publisher={SIAM}
}

@article{li2024reconstruction,
  title={Reconstruction of inhomogeneous media by an iteration algorithm with a learned projector},
  author={Li, K. and Zhang, B. and Zhang, H.},
  journal={Inverse Problems},
  volume={40},
  number={7},
  pages={075008},
  year={2024},
  publisher={IOP Publishing}
}

@article{desai2025neural,
  title={A neural network enhanced {Born} approximation for inverse scattering},
  author={Desai, A. and Ma, J. and Lahivaara, T. and Monk, P.},
  journal={arXiv preprint arXiv:2503.01596},
  year={2025}
}

@article{zhou2026exploring,
  title={Exploring low-rank structure for an inverse scattering problem with far-field data},
  author={Zhou, Y. and Audibert, L. and Meng, S. and Zhang, B.},
  journal={SIAM Journal on Applied Mathematics},
  volume={86},
  number={1},
  pages={179--205},
  year={2026},
  publisher={SIAM}
}

@article{zhou2025recovery,
  title={On the recovery of two function-valued coefficients in the {Helmholtz} equation for inverse scattering problems via neural networks},
  author={Zhou, Z.},
  journal={Advances in Computational Mathematics},
  volume={51},
  number={1},
  pages={12},
  year={2025},
  publisher={Springer}
}

@article{ItoJinZou,
  title   = {A direct sampling method to an inverse medium scattering problem},
  author  = {Ito, K. and Jin, B. and Zou, J.},
  journal = {Inverse Problems},
  volume  = {28},
  pages   = {025003},
  year    = {2012}
}

@article{LiLiuShi2025,
  title   = {Identifying strictly convex obstacles from backscattering far field data},
  author  = {Li, J. and Liu, X. and Shi, Q.},
  journal = {arxiv:2505.11850},
  year    = {2025}
}

@article{LiuIP17,
  title   = {A novel sampling method for multiple multiscale targets from scattering amplitudes at a fixed frequency},
  author  = {Liu, X.},
  journal = {Inverse Problems},
  volume  = {33},
  pages   = {085011},
  year    = {2017}
}

@article{LiuShi,
  title   = {A quantitative sampling method for elastic and electromagnetic sources},
  author  = {Liu, X. and Shi, Q.},
  journal = {Journal of Computational Physics},
  volume  = {539},
  pages   = {114251},
  year    = {2025}
}

@article{LiuWang-AML,
title = {A Radon transform-based formula for reconstructing acoustic sources from the scattered fields},
author = {Liu, X. and Wang, J.},
journal = {Applied Mathematics Letters},
pages = {109988},
year = {2026},
issn = {0893-9659},
doi = {https://doi.org/10.1016/j.aml.2026.109988},
url = {https://www.sciencedirect.com/science/article/pii/S0893965926001199},
}

@article{StefUhl,
  title   = {Inverse backscattering for the acoustic equation},
  author  = {Stefanov, P. and Uhlmann, G.},
  journal = {SIAM Journal on Mathematical Analysis},
  volume  = {28},
  pages   = {1191--1204},
  year    = {1997}
}

@article{khoo2019switchnet,
  title={Switchnet: a neural network model for forward and inverse scattering problems},
  author={Khoo, Y. and Ying, L.},
  journal={SIAM Journal on Scientific Computing},
  volume={41},
  number={5},
  pages={A3182--A3201},
  year={2019},
  publisher={SIAM}
}

@article{liu2026direct,
  title={Direct sampling methods for acoustic sources: advancing to quantitative identification from qualitative probes},
  author={Liu, X. and Shi, Q.},
  journal={Inverse Probl. Imag.},
  volume={25},
  pages={67-93},
  year={2026}
}

@article{liu2025radon,
  title={The {Radon} Transform-Based Sampling Methods for Biharmonic Sources from the Scattered Fields},
  author={Liu, X. and Shi, Q. and Wang, J.},
  journal={arXiv:2512.10332},
  year={2025}
}

@article{buergel2019ipscatt,
  title     = {Algorithm 1001: {IPscatt}—A {MATLAB} Toolbox for the Inverse Medium Problem in Scattering},
  author    = {B{\"u}rgel, F. and Kazimierski, K. S. and Lechleiter, A.},
  journal   = {ACM Transactions on Mathematical Software},
  volume    = {45},
  number    = {4},
  pages     = {1--20},
  year      = {2019},
  publisher = {Association for Computing Machinery},
  address   = {New York, NY, USA},
  doi       = {10.1145/3328525},
  url       = {https://doi.org/10.1145/3328525}
}

\end{document}